\documentclass{article}

\usepackage{amsthm,amsmath,amsfonts,amssymb}
\usepackage{natbib}
\usepackage[colorlinks,citecolor=blue,urlcolor=blue, linkcolor=blue]{hyperref}
\usepackage{graphicx}
\usepackage{booktabs}				  %
\usepackage{subcaption}			     %
\usepackage{bbm}					     %
\usepackage{dutchcal}				   %
\usepackage{enumitem}       	     %
\usepackage{fontawesome5}   	%
\usepackage{xcolor}         			 %
\usepackage{authblk}					%
\usepackage{orcidlink}					%
\usepackage{geometry}
\newtheorem{Theor}{Theorem}[section]
\newtheorem{Prop}{Proposition}[section]

\newtheorem{Lem}{Lemma}[section]
\newtheorem{Corol}{Corollary}[section]

\numberwithin{equation}{section}

\newcommand{\Dstat}{D}

\DeclareMathOperator*{\argmax}{\arg\max}

\newcommand{\N}{\mathbb N}
\newcommand{\R}{\mathbb R}

\newcommand{\E}{{\rm E}}

\newcommand{\n}{^{(n)}}

\defcitealias{Vaart2023}{VW23}

\title{Möbius-Invariant Goodness-of-Fit Tests for the Spherical Cauchy Model}
\author[1]{Diego Bolón}
\author[2]{Davy Paindaveine} 
\affil[1]{Department of Probability and Mathematical Statistics, Charles University, Prague (Czech Republic)} 
\affil[2]{ECARES and Department of Mathematics, Université libre de Bruxelles, Brussels (Belgium)} 
\date{}

\begin{document}
\maketitle

\begin{abstract}
	We introduce a class of goodness-of-fit tests for the spherical Cauchy model on the unit hypersphere. The proposed procedures exploit the invariance of the spherical Cauchy family under M\"{o}bius transformations: after estimating the parameter by the sample M\"{o}bius mean, the observations are transformed to approximate spherical uniformity, and a projection-based uniformity statistic is applied to the resulting sample. We show that, under mild conditions, the resulting tests are exactly distribution-free under the null hypothesis, so that exact critical values can be arbitrarily well approximated by Monte Carlo simulation. We study the M\"{o}bius mean as a population functional, establish its existence and uniqueness under mild conditions, and prove equivariance and asymptotic linearity of its empirical counterpart, which coincides with the spherical Cauchy maximum likelihood estimator. We derive the asymptotic null distribution of the test statistic and show that it coincides with that of the underlying uniformity statistic after removing the degree-one spherical-harmonic component, which corresponds to the tangent space of the spherical Cauchy model. We establish consistency against fixed alternatives and characterize local powers through the spherical-harmonic decomposition of contiguous alternatives. Monte Carlo experiments demonstrate the finite-sample accuracy of the asymptotic approximations and the empirical power of the proposed tests. A real data example is treated.
\end{abstract}
\begin{flushleft}
	\small\textbf{Keywords:} Directional statistics, exact distribution-freeness, goodness-of-fit testing, group invariance, Möbius transformations, nonparametric statistics.
\end{flushleft}

\section{Introduction}
\label{sec:Intro}

Directional data arise whenever the magnitude of an observation is irrelevant or unavailable and only its direction carries information; see, e.g., \citet{MardiaJupp2000} and \citet{LeyVerdebout2017} for comprehensive accounts of directional statistics. Examples occur in meteorology, structural geology, palaeomagnetism, astronomy, biology, and bioinformatics, among many other fields. Such observations naturally take values on a unit sphere and require statistical procedures that respect the geometry of their support. 

Parametric directional models play an important role in this setting: they provide concise descriptions of location and concentration, enable likelihood-based inference and prediction, and often constitute convenient building blocks for more elaborate regression, clustering, or time-series models. As in Euclidean statistics, however, the usefulness of the conclusions drawn from a parametric analysis depends on whether the assumed distribution adequately describes the data. A formal goodness-of-fit assessment is therefore an essential complement to estimation and inference rather than a secondary modeling step. The need for a model-specific goodness-of-fit assessment is particularly clear for spherical data. Different rotationally symmetric distributions may have similar centers and mean resultant lengths while allocating substantially different amounts of probability away from their modal direction. Consequently, choosing one such family rather than another can affect estimated probabilities of spherical regions, prediction regions, and the interpretation of concentration. %

Classical goodness-of-fit work for directional data has concentrated mainly on testing uniformity; see \citet{GarciaPortuguesVerdebout2018} for an extensive review. Model-specific procedures have focused on a small number of prominent families, especially the Fisher--von Mises and Watson families; see \citet{FisherBest1984}, \citet{MardiaHolmesKent1984}, \citet{LockhartStephens1985}, and \citet{BestFisher1986}. By comparison, procedures applicable to other parametric families on spheres of arbitrary dimension remain much more limited; see, for example, \citet{Boulerice1997}, \citet{Boente2014}, and \citet{Ebner2024}. This scarcity is especially pronounced for composite hypotheses, in which the parameter indexing the null family must be estimated from the same observations used to assess fit.

In this paper, we consider the spherical Cauchy model studied by \citet{Kato2020}, which is indexed by a vector~$\phi$ in the open unit ball, whose direction determines the modal direction and whose norm controls concentration. It extends the well-known wrapped Cauchy distribution from the circle to spheres of arbitrary dimension and provides an alternative to the Fisher--von Mises model. Although the two models are unimodal and rotationally symmetric, their concentration profiles can differ markedly: at comparable mean resultant lengths, the spherical Cauchy distribution can allocate more probability near the mode and the antimode and less over intermediate regions of the sphere. Assessing whether this model is compatible with the data is therefore a genuine modeling question that cannot be reduced to testing rotational symmetry (see \citet{GarciaPortuguesPaindaveineVerdebout2020} and the references therein) or estimating a single concentration index.

One of the main appeals of the spherical Cauchy family of distributions is geometric. It is closed under a special type of transformations of the sphere that are called \emph{Möbius transformations}. More precisely, the spherical Cauchy family forms a transformation model under the action of these transformations.
The uniform distribution is the central member of this model: every spherical Cauchy distribution is obtained by applying an appropriate Möbius transformation to a uniform random direction, and conversely, a suitable inverse transformation maps a spherical Cauchy random direction to uniformity. As shown by \citet{Kato2020}, this structure yields an efficient random-generation algorithm, pivotal quantities, simple expressions for probabilities of spherical regions, and a connection with a multivariate $t$ distribution through stereographic projection. These properties make the spherical Cauchy family both mathematically tractable and potentially useful in applications. Yet the same transformation structure has not, to our knowledge, been exploited to develop a goodness-of-fit procedure for this model.

General-purpose goodness-of-fit methods for directional data are available, but none takes advantage of this transformation structure. The smooth tests of \citet{Boulerice1997} embed the null family into a larger exponential family and apply a score-test construction. This approach is flexible and can diagnose particular departures, but it requires the practitioner to select an embedding family and a finite collection of components; its power is consequently tied to that choice, and adapting the required orthonormal construction to the spherical Cauchy family is not immediate. The density-based tests of \citet{Boente2014} compare a directional kernel density estimator with a smoothed parametric fit. They apply to general parametric families, but require the selection of a bandwidth, have a null approximation whose convergence may be slow, and are typically implemented through a parametric bootstrap. More recently, \citet{Ebner2024} proposed a characteristic-function approach in which an artificial sample is generated from the fitted null distribution. This yields a broadly applicable test, but introduces an artificial-sample size and a kernel tuning parameter, and calibration under a composite hypothesis again relies on a parametric bootstrap. Thus, although these methods are valuable in their generality, a test tailored to the spherical Cauchy transformation model can offer substantial advantages in invariance and calibration.

Our starting point is the observation that every spherical Cauchy distribution can be transformed into spherical uniformity by an appropriate Möbius transformation. If~$\phi$ were known, goodness-of-fit could therefore be assessed by transforming the observations and applying a test of uniformity. Since~$\phi$ is unknown, we replace it with a Möbius-equivariant estimator and apply a rotation-invariant uniformity statistic to the transformed sample. For this step, we use the projection-based class of \citet{GarciaPortugues2023}, which aggregates, over all projection directions, weighted quadratic discrepancies between the projected empirical distributions and their distributions under spherical uniformity. This class includes Cramér--von Mises-type procedures, admits computable pairwise-kernel representations, and yields omnibus tests under mild conditions on the weighting measure.

Estimating~$\phi$ makes the transformed observations neither independent nor exactly uniform, even under the null hypothesis. The Möbius structure nevertheless yields a stronger result than asymptotic pivotality.
We use the Möbius transformations to introduce a new location/scale parameter for directional distributions that we call the \textit{Möbius mean}. This new parameter characterizes the distribution under the spherical Cauchy model, and its sample version coincides with the maximum likelihood estimator of $\phi$ considered by \citet{Kato2020}. We establish existence and uniqueness of the population and sample Möbius means under mild conditions and prove equivariance of the sample version. Combined with the rotational invariance of the uniformity statistic, this implies that the finite-sample null distribution of the resulting goodness-of-fit statistic does not depend on~$\phi$. Exact critical values can therefore be arbitrarily well approximated by Monte Carlo simulation for each sample size, dimension, and weighting measure, rather than obtained by parametric bootstrap for every data set.

Our theoretical contributions go well beyond exact distribution-freeness. We derive a Bahadur representation and the asymptotic distribution of the sample Möbius mean under general non-atomic distributions, as well as the asymptotic null distribution of the proposed test statistics. Remarkably, parameter estimation removes precisely the degree-one spherical-harmonic component of the underlying uniformity statistic, which corresponds to the tangent space of the spherical Cauchy family at uniformity. We also establish consistency against fixed alternatives, show that the Cramér--von Mises version is omnibus over a broad class of absolutely continuous distributions, and characterize the behavior of the proposed tests under contiguous alternatives. Their local power is determined by the spherical-harmonic components orthogonal to the nuisance tangent space, explaining why Möbius-invariant tests cannot detect, at the $n^{-1/2}$ scale, alternatives moving only along a degree-one direction.

The remainder of the paper is organized as follows. Section~\ref{sec:preliminaries} reviews the spherical Cauchy family and the Möbius transformations used throughout the paper. Section~\ref{secinvariance} introduces the proposed tests and establishes their invariance and exact distribution-freeness. Section~\ref{sec:estimation} develops the population and sample M\"{o}bius means and studies the asymptotic properties of the sample version. Sections~\ref{secNull} and~\ref{secNonNull} derive, respectively, the asymptotic null distribution and the behavior of the tests under fixed and local alternatives, and provide numerical illustrations supporting the asymptotic results. Section~\ref{secsimus} focuses on the Cram\'{e}r--von Mises version of the proposed tests: it investigates its finite-sample power against several alternatives and compares it with the characteristic-function procedure of \citet{Ebner2024}. Section~\ref{sec:realdata} illustrates the practical relevance of our proposal with a real-data example. Finally, Section~\ref{secwrapup} concludes. Proofs are collected in the appendix.

For the sake of convenience, we introduce some notation that will be used throughout the paper.
We write
$
\mathbb S^d
=
\{x\in\mathbb R^{d+1}:\|x\|^2=x^Tx=1\}
$
for the unit sphere in~$\mathbb R^{d+1}$ and
$
\mathbb B^{d+1}
=
\{x\in\mathbb R^{d+1}:\|x\|<1\}
$
for the corresponding open unit ball. We denote as~$\sigma_d$ the surface area measure on~$\mathbb S^d$, and write~$\nu_d$ for the corresponding uniform probability measure, that is,
$
\nu_d
=
\{\Gamma\big(\frac{d+1}{2}\big)/(2\pi^{(d+1)/2})\}
\sigma_d$.
We write $\mathrm{SO}(d+1)$ for the group of $(d+1)\times(d+1)$ rotation
matrices and $I=I_{d+1}$ for the $(d+1)$-dimensional identity matrix. Finally, $\mathbbm{1}[A]$ will stand for the indicator function of the set or condition~$A$.

\section{Preliminaries}
\label{sec:preliminaries}

This section collects the main facts about spherical Cauchy distributions and Möbius transformations that will be used throughout the paper. We first recall the definition and basic features of the spherical Cauchy family, and then describe the action of the Möbius group on both the sample space and the parameter space. The key consequence is that the spherical Cauchy family forms a transformation model with spherical uniformity as its central distribution. This property will provide the basis for the goodness-of-fit construction in Section~\ref{secinvariance}.

\subsection{Spherical Cauchy distributions}

The spherical Cauchy family, introduced and studied in detail by
\citet{Kato2020}, is indexed by a parameter
$\phi\in\mathbb B^{d+1}$. Its density with respect to the surface area
measure~$\sigma_d$ is
\begin{equation}
	\label{eq:densCauchy}
	f(x;\phi)
	:=
	\frac{\Gamma\big(\frac{d+1}{2}\big)}
	{2\pi^{(d+1)/2}}
	\bigg(
	\frac{1-\|\phi\|^2}{\|x-\phi\|^2}
	\bigg)^d,
	\qquad
	x\in\mathbb S^d.
\end{equation}
We write $X\sim\mathrm{SC}(\phi)$ when $X$ has density
\eqref{eq:densCauchy}. For~$\phi=0$, this yields the uniform distribution~$\nu_d$ on~$\mathbb S^d$. For~$\phi\neq 0$, the larger~$\|\phi\|$, the higher the concentration about the modal direction~$\phi/\|\phi\|$.

\subsection{Möbius transformations}

We next recall the Möbius transformations underlying the transformation
structure of the spherical Cauchy family. Let
$\bar{\mathbb R}^{d+1}:=\mathbb R^{d+1}\cup\{\infty\}$ denote the
one-point compactification of~$\mathbb R^{d+1}$. For
$R\in\mathrm{SO}(d+1)$ and
$\phi\in\mathbb R^{d+1}$ with $\|\phi\|\neq1$, define the Möbius transformation 
\begin{equation}
	\mathcal{M}_{R, \phi} (x)
	:=
	R
	\Bigg[
	\frac{1 - \| \phi \|^2 }{\| \frac{x}{\| x \|^2 } + \phi \|^2}
	\bigg(
	\frac{x}{\| x \|^2} + \phi \bigg)
	+
	\phi
	\Bigg]
	,
	\quad
	x \in \R^{d+1} \setminus \bigg\lbrace 0, - \frac{\phi}{\| \phi \|^2}  \bigg\rbrace.
\end{equation}
This transformation can be extended to a mapping from $\bar{\R}^{d+1} = \R^{d+1} \cup \{\infty \}$ onto itself by further letting 
$$
\mathcal{M}_{R, \phi} (0) 
:=
R \phi, 
\quad
\mathcal{M}_{R, \phi} (- \phi/\| \phi \|^2) 
:=
\infty,
\quad
\textrm{and}
\quad
\mathcal{M}_{R, \phi} (\infty) 
:=
R \phi/\| \phi \|^2 
.
$$
Then, \mbox{$\mathcal{M}_{R, \phi}: \bar{\R}^{d+1} \to \bar{\R}^{d+1}$} is a bijection (see Theorem~2.1 in \citealp{Kato2020}). The collection of all M\"{o}bius transformations $\mathcal{G} = \{ \mathcal{M}_{R, \phi} \colon R \in \mathrm{SO} (d + 1), \phi \in \R^{d+1} \setminus \mathbb{S}^d \}$ is a group under composition (see Theorem~2.2 in \citealp{Kato2020}). The identity element is $\mathcal{M}_{I, 0}$, and the inverse of $\mathcal{M}_{R, \phi}$ is $\mathcal{M}_{R^T, - R \phi}$.

One easily sees that any M\"{o}bius transformation maps the unit hypersphere $\mathbb{S}^d$ onto itself. In fact, the restriction of $\mathcal{M}_{R, \phi}$ to $\mathbb{S}^d$, say $\tilde{\mathcal{M}}_{R, \phi}$, has some interesting properties:
\begin{enumerate}
	\item This restriction has the form
	\begin{equation}
		\tilde{\mathcal{M}}_{R, \phi} (x)
		:=
		R
		\bigg[
		\frac{1 - \| \phi \|^2 }{\| x + \phi \|^2}
		(
		x + \phi )
		+
		\phi
		\bigg]
		,
		\quad
		x \in \mathbb{S}^{d}.
	\end{equation}
	\item $\tilde{\mathcal{M}}_{R, \phi}: \mathbb{S}^{d} \to \mathbb{S}^{d}$ is a bijection.
	\item Given $\phi \neq 0$,
	\begin{equation}
		\label{eq:phi>1}
		\tilde{\mathcal{M}}_{I, \phi/ \| \phi \|^2} (x) = T_{\phi} \big\{ \tilde{\mathcal{M}}_{I, \phi} (x) \big\},
	\end{equation}
	where $T_{\phi} = 2 \phi \phi^T / \| \phi \|^2 - I$ is the reflection with respect to $\phi$-axis. 
\end{enumerate}

The spherical Cauchy family is invariant with respect to M\"{o}bius transformations:
\begin{equation}
	X \sim \mathrm{SC} (\phi) \Rightarrow \mathcal{M}_{R, \varphi} (X) \sim \mathrm{SC} \big( \mathcal{M}_{R, \varphi} (\phi) \big)
\end{equation}
(see Theorem~2.3 in \citealp{Kato2020}). Since $\mathrm{SC} (0)=\mathrm{Unif} (\mathbb{S}^d)$, we have in particular that
\begin{equation}
	\label{eq:CauchyUnif}
	X \sim \mathrm{SC} (\phi) \Leftrightarrow \mathcal{M}_{I, -\phi} (X) \sim \mathrm{Unif} (\mathbb{S}^d)
\end{equation}
for any $\phi\in\mathbb{B}^{d+1}$.

\section{Test construction and invariance arguments}
\label{secinvariance}

Let $X$ be a random vector on~$\mathbb{S}^d$ with distribution ${\rm P}$. We aim to test the null hypothesis that~$F$ is a distribution from the spherical Cauchy family. That is to say, our objective is to construct a test for the null hypothesis
\begin{equation}
	\label{eq:GoFCauchy}
	H_0: {\rm P} \in \{ \mathrm{SC} (\phi) \colon \phi \in \mathbb{B}^{d + 1} \}.
\end{equation}
The property \eqref{eq:CauchyUnif} allows us to address this testing problem in a natural manner. We know that $X_1, \ldots, X_n$ is an i.i.d.~sample from $\mathrm{SC} (\phi)$ if and only if $\mathcal{M}_{I, -\phi} (X_1), \ldots, \mathcal{M}_{I, -\phi} (X_n)$ is an i.i.d.~sample from the uniform distribution on the hypersphere. Consequently, if $\hat{\phi}_n$ is a consistent estimator of $\phi$, the transformed sample $\mathcal{M}_{I, -\hat{\phi}_n} (X_1), \ldots, \mathcal{M}_{I, -\hat{\phi}_n} (X_n)$ should approximately behave like a uniform sample and, therefore, we can test \eqref{eq:GoFCauchy} by applying a uniformity test to the transformed sample. If both $\hat{\phi}_n$ and the uniformity test fulfill mild equivariance/invariance properties, then the resulting goodness-of-fit test will be distribution-free under the null hypothesis. More precisely, we have the following results.

\begin{Theor}
	\label{th:invariance}
	Let $\hat{\phi}_n \colon \mathbb{S}^{d} \times \ldots \times \mathbb{S}^{d} \to \mathbb{B}^{d+1}$ be an estimator of~$\phi$ that is equivariant  with respect to the M\"{o}bius transformations, in the sense that
	\begin{equation}
		\hat{\phi}_n \big( \mathcal{M}_{R,\varphi} (x_1), \ldots, \mathcal{M}_{R, \varphi} (x_n) \big)
		=
		\mathcal{M}_{R, \varphi} \big( \hat{\phi}_n (x_1, \ldots, x_n) \big)
	\end{equation}
	for all $x_1, \ldots x_n \in \mathbb{S}^d$, $\varphi \in \mathbb{B}^{d+1}$ and $R \in \mathrm{SO} (d + 1)$. 
	Let $\mathcal{U}_n: \mathbb{S}^d \times \ldots \times \mathbb{S}^d \to \R$ be a function that is invariant under rotations, that is,
	$$\mathcal{U}_n (R x_1, \ldots, R x_n) = \mathcal{U}_n ( x_1, \ldots,  x_n)$$
	for all $x_1, \ldots x_n \in \mathbb{S}^d$ and $R \in \mathrm{SO}(d+1)$.
	Consider the mapping $\Dstat_n \colon \mathbb{S}^d \times \ldots \times \mathbb{S}^d \to \R$ defined as
	$$
	\Dstat_n (x_1, \ldots, x_n)
	=
	\mathcal{U}_n \big( \mathcal{M}_{I, - \hat{\phi}_n (x_1, \ldots, x_n)} (x_1), \ldots, \mathcal{M}_{I, - \hat{\phi}_n (x_1, \ldots, x_n)} (x_n) \big).
	$$
	Then, $\Dstat_n$ is invariant under M\"{o}bius transformations, that is,
	\begin{equation}
		\Dstat_n (x_1, \ldots, x_n)
		=
		\Dstat_n \big( \mathcal{M}_{R, \varphi} (x_1), \ldots, \mathcal{M}_{R, \varphi} (x_n) \big)
	\end{equation}
	for all $x_1, \ldots x_n \in \mathbb{S}^d$, $\varphi \in \mathbb{B}^{d+1}$ and $R \in \mathrm{SO} (d + 1)$.
\end{Theor}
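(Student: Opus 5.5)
Write $\hat\phi := \hat\phi_n(x_1,\ldots,x_n)$ and $y_i := \mathcal{M}_{R,\varphi}(x_i)$. By the equivariance assumption,
$$\hat\phi_n(y_1,\ldots,y_n) = \mathcal{M}_{R,\varphi}(\hat\phi) =: \psi.$$
Note that $\psi\in\mathbb{B}^{d+1}$, since $\hat\phi_n$ takes values in the ball. The transformed sample on the right-hand side is therefore $\mathcal{M}_{I,-\psi}\circ\mathcal{M}_{R,\varphi}(x_i)$, while the one on the left-hand side is $\mathcal{M}_{I,-\hat\phi}(x_i)$. By rotation invariance of $\mathcal{U}_n$, it suffices to show that there is a rotation $Q\in\mathrm{SO}(d+1)$, not depending on $i$, such that
$$\mathcal{M}_{I,-\psi}\circ\mathcal{M}_{R,\varphi}(x) = Q\,\mathcal{M}_{I,-\hat\phi}(x)\qquad\text{for all } x\in\mathbb{S}^d.$$

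To find $Q$, consider $g := \mathcal{M}_{I,-\psi}\circ\mathcal{M}_{R,\varphi}\circ\mathcal{M}_{I,-\hat\phi}^{-1}$. By the group property (Theorem~2.2 in \citealp{Kato2020}), $g$ is a Möbius transformation, say $g=\mathcal{M}_{Q,\eta}$. We then compute $g(0)$. The inverse of $\mathcal{M}_{I,-\hat\phi}$ is $\mathcal{M}_{I,\hat\phi}$, and $\mathcal{M}_{I,\hat\phi}(0)=\hat\phi$ by the extension convention. Hence $\mathcal{M}_{R,\varphi}(\hat\phi)=\psi$. It remains to check that $\mathcal{M}_{I,-\psi}(\psi)=0$. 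This follows because $\mathcal{M}_{I,-\psi}$ is the inverse of $\mathcal{M}_{I,\psi}$ and $\mathcal{M}_{I,\psi}(0)=\psi$. So $g(0)=0$. On the other hand, $g(0)=\mathcal{M}_{Q,\eta}(0)=Q\eta$ by definition, so $\eta=0$.

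It remains to identify $\mathcal{M}_{Q,0}$. Plugging $\phi=0$ into the formula gives, for $x\neq 0$,
$$\mathcal{M}_{Q,0}(x)=Q\,\frac{x/\|x\|^2}{\|x\|^{-2}}=Qx,$$
and the extended values at $0$ and $\infty$ are consistent with this. Hence $g$ is the rotation $Q$, and
$$\mathcal{M}_{I,-\psi}\circ\mathcal{M}_{R,\varphi}=Q\circ\mathcal{M}_{I,-\hat\phi}.$$
Applying this to each $x_i$ and using rotation invariance of $\mathcal{U}_n$ gives the claim.

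The main obstacle is the bookkeeping in the middle step: one must make sure the group-closure statement really yields a representation $\mathcal{M}_{Q,\eta}$ with $Q\in\mathrm{SO}(d+1)$ (and not merely $\mathrm{O}(d+1)$), and that the parameter $\eta$ avoids $\mathbb{S}^d$. Both follow from the definition of $\mathcal{G}$ as stated in the paper. If there were concern about the parametrization (for example, that $\eta$ might have $\|\eta\|>1$), I would instead argue directly: since $g(\infty)=Q\eta/\|\eta\|^2$ when $\eta\neq0$, which is finite, the condition $g(0)=0$ together with $Q\eta=0$ already forces $\eta=0$.
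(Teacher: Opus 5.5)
Your proof is correct, but it takes a genuinely different route from the paper's.

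The paper argues by direct computation. It rewrites $\mathcal{M}_{I,-\hat\phi_n^\ast}$ (your $\psi$ plays the role of $\hat\phi_n^\ast$) using the reflection identity $\tilde{\mathcal{M}}_{I,\phi/\|\phi\|^2}=T_\phi\tilde{\mathcal{M}}_{I,\phi}$. It then applies the explicit composition formula of Lemma~2.1 in \citet{Kato2020} and the reflection identity once more. This gives $\mathcal{M}_{I,-\hat\phi_n^\ast}(z_i)=O_n\,\mathcal{M}_{I,-\hat\phi_n}(x_i)$ with $O_n=T_{\hat\phi_n^\ast}R_nT_{-\hat\phi_n}$, which has determinant one since $\det T_a\det T_b=(-1)^{2d}$. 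The cases $\varphi=0$ and $\hat\phi_n^\ast=0$ are handled separately.

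You argue structurally instead. Your $g$ lies in $\mathcal{G}$ and fixes the origin, and $\mathcal{M}_{Q,\eta}(0)=Q\eta$ forces $\eta=0$, so $g=Q$. This uses only the group property, the inverse formula and the convention at $0$ from Section~2, and it needs no case distinction. It also covers cleanly the configuration $\varphi\neq0$, $\hat\phi_n(x_1,\ldots,x_n)=0$. There the paper's case~(i) formula involves $T_{-\hat\phi_n}$ and a division by $\|\hat\phi_n\|^2$, so that case strictly needs a separate line.

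In exchange, the paper's explicit computation is set up to be reused in the proof of Theorem~\ref{TheorIvarianceMobMean}. There $\hat\phi_n^\ast$ is not yet known to equal $\mathcal{M}_{R,\varphi}(\hat\phi_n)$, so the pulled-back sample must be written for an arbitrary parameter. Your argument adapts to that setting too: for any $a\in\mathbb{B}^{d+1}$, the map $\mathcal{M}_{I,-a}\circ\mathcal{M}_{R,\varphi}$ equals $Q\circ\mathcal{M}_{I,\eta}$ with $\eta=-\mathcal{M}_{R,\varphi}^{-1}(a)$, obtained by evaluating the inverse at $0$.

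Your closing fallback about $g(\infty)$ is unnecessary. Since $Q$ is invertible, $Q\eta=0$ already gives $\eta=0$ whatever the value of $\|\eta\|$.
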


\begin{Corol}
	\label{CorDF}
	Let~$X_1, \ldots, X_n \sim \mathrm{SC} (\phi)$, with~$\phi \in \mathbb{B}^{d+1}$, and let the assumptions of Theorem~\ref{th:invariance} hold. Then, the distribution of
	$\Dstat_n (X_1, \ldots, X_n)$
	does not depend on~$\phi$.
\end{Corol}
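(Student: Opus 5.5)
The plan is to deduce the corollary directly from the invariance in Theorem~\ref{th:invariance}, combined with the transformation-model property~\eqref{eq:CauchyUnif}. Fix $\phi\in\mathbb{B}^{d+1}$ and let $X_1,\ldots,X_n$ be i.i.d.\ $\mathrm{SC}(\phi)$. Define $U_i:=\mathcal{M}_{I,-\phi}(X_i)$, $i=1,\ldots,n$. By~\eqref{eq:CauchyUnif}, each $U_i$ is uniform on $\mathbb{S}^d$. Since the $U_i$'s are obtained by applying one fixed deterministic bijection to independent variables, $U_1,\ldots,U_n$ form an i.i.d.\ sample from $\nu_d$.

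Next we invert the transformation. The inverse of $\mathcal{M}_{I,-\phi}$ is $\mathcal{M}_{I,\phi}$, so $X_i=\mathcal{M}_{I,\phi}(U_i)$ for every~$i$. Here $\phi\in\mathbb{B}^{d+1}$ and $I\in\mathrm{SO}(d+1)$, so the invariance statement of Theorem~\ref{th:invariance} applies with $R=I$ and $\varphi=\phi$. This gives, pointwise on the sample space,
\begin{equation*}
\Dstat_n(X_1,\ldots,X_n)=\Dstat_n\big(\mathcal{M}_{I,\phi}(U_1),\ldots,\mathcal{M}_{I,\phi}(U_n)\big)=\Dstat_n(U_1,\ldots,U_n).
\end{equation*}

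The right-hand side is a fixed measurable function of an i.i.d.\ uniform sample. Its law is therefore the same for every $\phi$, which proves the claim. In particular, the null distribution coincides with the distribution of $\Dstat_n$ computed on uniform samples, and this is what justifies Monte Carlo calibration.

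There is no genuine obstacle; only minor technical points need checking. First, the identification of the inverse: it is stated earlier that the inverse of $\mathcal{M}_{R,\phi}$ is $\mathcal{M}_{R^T,-R\phi}$, and with $R=I$ and parameter $-\phi$ this is $\mathcal{M}_{I,\phi}$. Second, the restriction to $\mathbb{S}^d$ is a bijection of the sphere, so the $X_i$ avoid the singular point $-\phi/\|\phi\|^2$, which lies outside the closed ball. Third, measurability of $\Dstat_n$, which is implicitly assumed by the statement.
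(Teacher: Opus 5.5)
Your proof is correct and follows essentially the same approach as the paper: define $U_i=\mathcal{M}_{I,-\phi}(X_i)$, which are i.i.d.\ uniform by~\eqref{eq:CauchyUnif}, and then use the M\"{o}bius invariance of Theorem~\ref{th:invariance} (with $R=I$, $\varphi=\phi$) to get $\Dstat_n(X_1,\ldots,X_n)=\Dstat_n(U_1,\ldots,U_n)$. Your extra checks on the inverse map and on the singular point are accurate, though the paper does not spell them out.
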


In practice, we need to choose a test of uniformity on~$\mathbb{S}^d$. To fix ideas, we consider the projection-based tests from 
\cite{GarciaPortugues2023}. When based on a random sample~$Z_1,\ldots,Z_n$, these tests reject the null hypothesis of uniformity for large values of
$$
P^W_{n}
:=
P^W_{n}(Z_1,\ldots,Z_n)
:=
n
\int_{\mathbb{S}^d}
\int_{-1}^{1}
\{
F_{n,u}(s)-F_d(s)
\}^2
\,
dW(F_d(s))
\,
d\nu_d(u)
,
$$
where~$F_{n,u}$ is the empirical cumulative distribution function of~$u^TZ_1,\ldots,u^TZ_n$, 
\begin{equation}
	\label{FdExpr}
	s\mapsto 
	F_d(s)
	:=
	\frac{\Gamma(\frac{d+1}{2})}{\sqrt{\pi}\Gamma(\frac{d}{2})}
	\int_{-1}^s
	(1-t^2)^{(d-2)/2}
	\,
	dt,
	\qquad
	s\in[-1,1],
\end{equation}
is the cumulative distribution function of~$u^T V$ when~$V\sim \mathrm{Unif} (\mathbb{S}^d)$, and $W$ is a nonnegative $\sigma$-finite Borel measure on~$[0,1]$. These test statistics can be computed without evaluating the involved integrals as
\begin{equation}
	\label{Edu}
	P^W_{n}
	=
	\frac{1}{n}
	\sum_{i,j=1}^n
	\psi(Z_i,Z_j)
	,
\end{equation}
with
\begin{equation}
	\label{DefPsi}
	\psi(x,y)
	=
	\int_{\mathbb{S}^d}
	\int_{-1}^1
	\{\mathbbm{1}[ u^T x \leq s ]-F_d(s)\}
	\{\mathbbm{1}[ u^T y \leq s ]-F_d(s)\}
	\,
	dW(F_d(s))
	d\nu_d(u)
	;
\end{equation}
see Equation~(14) in \cite{GarciaPortugues2023}. 

When based on a random sample~$X_1,\ldots,X_n$ from a distribution~$\rm P$ on~$\mathbb{S}^d$, the resulting goodness-of-fit tests thus reject the null hypothesis~$H_0$ in~(\ref{eq:GoFCauchy}) for large values of 
\begin{equation}
	\label{TestStatistDn}
	\Dstat_n
	:=
	\Dstat_n(X_1,\ldots,X_n)
	:=
	\frac{1}{n}
	\sum_{i,j=1}^n
	\psi\big(\mathcal{M}_{I, -\hat{\phi}_n} (X_i),\mathcal{M}_{I, -\hat{\phi}_n} (X_j)\big)
	,
\end{equation}
where~$\hat{\phi}_n$ is an equivariant estimator of~$\phi$. Since the test statistics in~(\ref{Edu}) are invariant under rotations, Corollary~\ref{CorDF} implies that~$\Dstat_n$ is then exactly distribution-free under the null hypothesis. To implement the corresponding tests, critical values can thus be approximated arbitrarily
accurately by Monte Carlo simulation; see Section~\ref{sec:critvalues} for details.

To obtain a well-defined test, we need to have a suitable equivariant estimator of~$\phi$. We construct such an estimator in the next section.

\section{Population and sample M\"{o}bius means}
\label{sec:estimation}

Let ${\rm P}$ be a probability measure on~$\mathbb{S}^d$. We say that $\phi \in \mathbb{B}^{d+1}$ is a \emph{M\"{o}bius mean} of ${\rm P}$ if
\begin{equation}
	\phi
	\in
	\argmax_{\varphi\in\mathbb{B}^{d+1}} 
	\E \bigg[
	\log \bigg( \frac{1 - \| \varphi \|^2}{\| X - \varphi \|^2 }\bigg)
	\bigg]
	\quad
	\bigg(
	=
	\argmax_{\varphi\in\mathbb{B}^{d+1}} 
	\E [
	\log f(X;\varphi)
	]
	\bigg)
	,
\end{equation}
where~$X$ has distribution~${\rm P}$; see~\eqref{eq:densCauchy}.
Letting~$G (\varphi) = \E[ \log ( 1 - \| \varphi \|^2 ) - \log  \| X - \varphi \|^2]$, a routine application of the Lebesgue Dominated Convergence Theorem entails that the map~$G$ is differentiable on~$\mathbb{B}^{d+1}$ with gradient
\begin{equation*}
	\nabla G (\varphi) = \frac{2}{1 - \| \varphi \|^2} \E[ \mathcal{M}_{I, -\varphi} (X)].
\end{equation*}
Therefore, a M\"{o}bius mean must satisfy the gradient condition
\begin{equation}
	\label{eq:Mobiusmean}
	\E \big[ \mathcal{M}_{I, -\phi} (X) \big] = 0
\end{equation}
(this condition, which shows that~$\phi$ is the parameter value that makes the M\"{o}bius transformations of $X$ have zero mean, explains the terminology \emph{M\"{o}bius mean}). In principle, there is no guarantee that this gradient condition is a sufficient condition for~$\phi$ to be a M\"{o}bius mean. The following result takes care of this issue under a very mild assumption on~$\rm P$, and also settles the question of existence and uniqueness of the M\"{o}bius mean.

\begin{Prop}
	\label{prop:Mobunique}
	Let ${\rm P}$ be a probability measure on~$\mathbb{S}^d$ and~$X$ be a random vector with distribution~${\rm P}$. Assume that~$\mathrm{P}[X=x]<\frac12$ for all~$x\in \mathbb{S}^{d}$. Then, ${\rm P}$ has one and only one M\"{o}bius mean, which is the unique solution of~\eqref{eq:Mobiusmean}.
\end{Prop}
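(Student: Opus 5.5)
The plan is to read $G$ through hyperbolic geometry. Equip $\mathbb{B}^{d+1}$ with the Poincaré ball metric. For fixed $x\in\mathbb{S}^d$, the map $\varphi\mapsto -\log\{(1-\|\varphi\|^2)/\|x-\varphi\|^2\}$ is, up to an additive constant, the Busemann function $b_x$ associated with the ideal boundary point $x$. So $G(\varphi)=-\E[b_X(\varphi)]$, and I would prove the three assertions (existence, uniqueness, and sufficiency of \eqref{eq:Mobiusmean}) from three facts:
\begin{itemize}
\item[(i)] $G$ is geodesically concave;
\item[(ii)] under the atom assumption it is strictly so;
\item[(iii)] $G(\varphi)\to-\infty$ as $\|\varphi\|\to1$.
\end{itemize}

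For (i) and (ii), I would use the fact that Busemann functions on hyperbolic space are convex along every geodesic $\gamma$. Moreover, $b_x\circ\gamma$ is affine (in fact linear, with slope $\pm1$) only when $\gamma$ has $x$ as one of its two ideal endpoints, and is strictly convex otherwise. To verify this concretely, I would use the Möbius invariance of the setting: $\mathcal{M}_{R,\varphi}$ acts as an isometry of the ball, and the integrand transforms by an additive constant. This reduces the check to the geodesic through $0$ along a diameter, where $b_x(t e)$ can be computed explicitly and its second derivative seen to vanish iff $x=\pm e$. Taking expectations, $G\circ\gamma$ is concave, and it is strictly concave unless $X$ is almost surely one of the two endpoints of $\gamma$. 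Since each atom has mass $<\frac12$, those two points carry total mass $<1$, so $G$ is strictly geodesically concave. Differentiation under $\E$ along $\gamma$ is justified by the same dominated-convergence argument already used for $\nabla G$.

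For (iii), fix a boundary point $\xi$. Since $\|x-\varphi\|\ge 1-\|\varphi\|$, each integrand is at most $\log\{(1+\|\varphi\|)/(1-\|\varphi\|)\}$. For $x$ at distance at least $\delta$ from $\xi$ and $\varphi$ close to $\xi$, the integrand is at most $\log(1-\|\varphi\|^2)+C_\delta$. Hence
$$G(\varphi)\le \{\mathrm{P}[\|X-\xi\|<\delta]-\mathrm{P}[\|X-\xi\|\ge\delta]\}\,\{-\log(1-\|\varphi\|)\}+C'_\delta .$$
Choose $\delta$ small enough that $\mathrm{P}[\|X-\xi\|<\delta]<\frac12$; this is possible since $\mathrm{P}[X=\xi]<\frac12$. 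Then the bound tends to $-\infty$ as $\varphi\to\xi$. Covering $\mathbb{S}^d$ by finitely many such neighbourhoods yields $G\to-\infty$ uniformly as $\|\varphi\|\to1$. Continuity of $G$ on the ball, by dominated convergence on compacts, then gives existence of a maximizer.

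To conclude, a maximizer is a critical point and therefore solves \eqref{eq:Mobiusmean}, since $\nabla G(\phi)$ is a positive multiple of $\E[\mathcal{M}_{I,-\phi}(X)]$. Conversely, any solution of \eqref{eq:Mobiusmean} is a critical point of a strictly geodesically concave function. Restricting $G$ to the geodesic joining it to the maximizer gives a strictly concave function of one variable with a critical point, which must be its unique maximum. Hence every solution equals the maximizer, giving uniqueness of both the Möbius mean and the solution of \eqref{eq:Mobiusmean}.

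I expect the main obstacle to be the strictness step (ii): pinning down that $b_x\circ\gamma$ fails to be strictly convex exactly when $x$ is an endpoint of $\gamma$, and doing the Möbius reduction carefully. The fallback is a direct computation of the Hessian of $\varphi\mapsto\log\|x-\varphi\|^2-\log(1-\|\varphi\|^2)$ along geodesic arcs, which are circles orthogonal to $\mathbb{S}^d$. The uniformity in (iii) is routine by comparison.
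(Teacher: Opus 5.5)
Your proof is correct, and it takes a genuinely different route from the paper's. The paper never establishes any global concavity of $G$. It computes the Euclidean Hessian and shows that, at any solution $\phi^\ast$ of \eqref{eq:Mobiusmean}, this Hessian equals $4(1-\|\phi^\ast\|^2)^{-2}(\E[ZZ^T]-I)$ with $Z=\mathcal{M}_{I,-\phi^\ast}(X)$. That matrix is negative definite because $Z$ cannot be carried by $\{v,-v\}$. The paper then invokes Theorem~2.1 of Gabrielsen (1986): a function tending to $-\infty$ at the boundary of a connected open set, all of whose critical points are strict local maxima, has exactly one critical point, namely its global maximizer. Its coercivity step rests on Lemma~\ref{Lem:Mobunique}, a uniform cap bound $\sup_v{\rm P}[v^TX>a]<\frac12$, which plays the role of your finite covering. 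By recognising $-G$ as the expected Busemann function of the Poincar\'e ball, you get strict concavity of $G$ along every hyperbolic geodesic. Uniqueness and sufficiency of \eqref{eq:Mobiusmean} then follow from one-variable concavity, with no external topological theorem. Your viewpoint also explains the surrounding results. M\"obius maps are hyperbolic isometries under which the integrand changes by a term not depending on $\varphi$, so the equivariance in Theorem~\ref{TheorIvarianceMobMean} becomes immediate. The M\"obius mean is thereby identified with the conformal barycenter of Douady and Earle. Your argument also shows that strictness only needs ${\rm P}$ not to be carried by two points; the threshold $\frac12$ is needed for coercivity alone. What the paper's route buys is reusability. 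The Hessian at $\phi$ is, up to a positive factor, the matrix $W_\phi$ of Proposition~\ref{PropBahadur}. The cap bound \eqref{tobeusedinGC} is applied verbatim to ${\rm P}_n$ to obtain uniform tightness of $\hat\phi_n$. With your approach, both would have to be derived separately. One detail to get right when writing up step~(ii): second derivatives must be taken in hyperbolic arclength. Along $\gamma(s)=\tanh(s/2)\,e$ one gets $b_x(\gamma(s))=\log(\cosh s-c\sinh s)$ with $c=x^Te$, hence $(b_x\circ\gamma)''=(1-c^2)/(\cosh s-c\sinh s)^2$, which confirms your strictness claim. In the Euclidean parameter, by contrast, $t\mapsto b_e(te)=\log\{(1-t)/(1+t)\}$ is not even convex, so there only the value at $t=0$ (after your M\"obius reduction) is meaningful.
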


Obviously, this result in particular applies in the spherical Cauchy model. Note that in this model, the M\"{o}bius mean is Fisher consistent in the sense that if~${\rm P}=\mathrm{SC} (\phi)$, then the unique M\"{o}bius mean of~${\rm P}$ is~$\phi$ (this directly follows from~\eqref{eq:CauchyUnif}).

We turn to the sample case. Given an i.i.d.~sample $X_1, \ldots, X_n$ from~${\rm P}$, a natural estimator of the M\"{o}bius mean of~${\rm P}$ is the M\"{o}bius mean~$\hat{\phi}_n$ of~${\rm P}_n$, where~${\rm P}_n$ is the empirical probability measure associated with the sample at hand. More specifically,
\begin{equation}
	\label{eq:sampleMobius}
	\hat{\phi}_n
	=
	\hat{\phi}_n (X_1, \ldots, X_n)
	=
	\argmax_{\varphi \in\mathbb{B}^{d+1}} 
	\frac{1}{n} \sum_{i = 1}^n 
	\log \bigg( \frac{1 - \| \varphi \|^2}{\| X_i - \varphi \|^2 }\bigg)
\end{equation}
(uniqueness is discussed in Proposition~\ref{Prop:ML} below). Note that $\hat{\phi}_n$ coincides with the maximum likelihood estimator of $\phi$ when $X_1,\ldots,X_n$ are \mbox{i.i.d.} with distribution~$\mathrm{SC} (\phi)$. The corresponding gradient condition now writes
\begin{equation}
	\label{eq:meanzero}
	\sum_{i = 1}^{n} \mathcal{M}_{I, - \hat{\phi}_n} (X_i)
	=
	0.
\end{equation}

The following result then easily follows from Proposition~\ref{prop:Mobunique}.

\begin{Prop}
	\label{Prop:ML}
	(i) Let $x_1,\ldots,x_n\in\mathbb{S}^d$, with~$n \geq 3$, be pairwise different. Then, the empirical probability measure associated with~$x_1,\ldots,x_n$ has one and only one M\"{o}bius mean, which is the unique solution of~\eqref{eq:meanzero} (with~$x_i$ rather than~$X_i$).
	In particular, (ii) if~$X_1,\ldots,X_n$, with~$n \geq 3$, form a random sample from a probability measure~${\rm P}$ that is non-atomic, then, with probability one, the corresponding empirical probability measure~${\rm P}_n$ has one and only one M\"{o}bius mean~$\hat{\phi}_n$, which is the unique solution of~\eqref{eq:meanzero}.
\end{Prop}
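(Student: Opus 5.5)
Both parts will follow from Proposition~\ref{prop:Mobunique} applied to the empirical probability measure. For part (i), I will take ${\rm P}$ to be the empirical measure ${\rm P}_n=\frac1n\sum_{i=1}^n\delta_{x_i}$ of the pairwise different points $x_1,\ldots,x_n\in\mathbb{S}^d$. Since the points are distinct, every $x\in\mathbb{S}^d$ satisfies ${\rm P}_n[\{x\}]\le 1/n$. For $n\geq3$ this gives $1/n\le 1/3<1/2$, so the atom condition of Proposition~\ref{prop:Mobunique} holds.

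Proposition~\ref{prop:Mobunique} then gives a unique M\"obius mean of ${\rm P}_n$, and identifies it as the unique solution of \eqref{eq:Mobiusmean} with $X\sim{\rm P}_n$. It remains to check that \eqref{eq:Mobiusmean} for $X\sim{\rm P}_n$ is equation \eqref{eq:meanzero}. This holds because $\E[\mathcal{M}_{I,-\varphi}(X)]=\frac1n\sum_i\mathcal{M}_{I,-\varphi}(x_i)$, and this vanishes if and only if the sum does. Two further checks are needed. First, the map $\varphi\mapsto\mathcal{M}_{I,-\varphi}(x_i)$ is well defined on $\mathbb{B}^{d+1}$, since $\|x_i-\varphi\|>0$ for $\|\varphi\|<1$. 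Second, the argmax in \eqref{eq:sampleMobius} is exactly the M\"obius mean of ${\rm P}_n$, which is immediate from the definition.

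For part (ii), I will show that if ${\rm P}$ is non-atomic then the $X_i$ are almost surely pairwise different. For $i\neq j$, independence and Fubini give
\[
{\rm P}[X_i=X_j]=\int {\rm P}[X_j=x]\,d{\rm P}(x)=0 .
\]
A finite union bound over the pairs $i\neq j$ shows that the event that all $X_i$ are distinct has probability one. On that event, part (i) applies to the realized sample and yields the conclusion.

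I see no real obstacle here; all the substance is in Proposition~\ref{prop:Mobunique}. The only point requiring care is that the threshold $n\geq3$ is exactly what makes $1/n<1/2$. For $n=2$, each atom has mass exactly $1/2$ and the proposition does not apply.
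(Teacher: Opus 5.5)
Your proposal is correct and follows the paper's proof essentially step for step. Part (i) applies Proposition~\ref{prop:Mobunique} to the empirical measure, whose atoms have mass at most $1/n<1/2$, and part (ii) uses Fubini to show that non-atomicity makes the $X_i$'s pairwise different almost surely; your union bound over pairs and the identification of \eqref{eq:Mobiusmean} with \eqref{eq:meanzero} simply spell out details the paper leaves implicit.
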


For any~$n\geq 3$, the M\"{o}bius mean~$\hat{\phi}_{n}$ is therefore well-defined when the sample belongs to the collection---${\cal X}^n$, say---of samples~$(x_1,\ldots,x_n)$ with pairwise different entries in~$\mathbb{S}^d$. For such samples, $\hat{\phi}_{n}$ is also equivariant with respect to M\"{o}bius transformations.

\begin{Theor}
	\label{TheorIvarianceMobMean}
	For any~$n \geq 3$, the M\"{o}bius mean~$\hat{\phi}_{n}$ is equivariant with respect to M\"{o}bius transformations on~${\cal X}^n$. In other words, for any~$n\geq 3$, we have
	\begin{equation*}
		\hat{\phi}_n \big( \mathcal{M}_{R,\varphi} (x_1), \ldots, \mathcal{M}_{R, \varphi} (x_n) \big)
		=
		\mathcal{M}_{R, \varphi} \big( \hat{\phi}_n (x_1, \ldots, x_n) \big)
	\end{equation*}
	for all $(x_1, \ldots x_n) \in {\cal X}^n$, $\varphi \in \mathbb{B}^{d+1}$ and $R \in \mathrm{SO} (d + 1)$.
\end{Theor}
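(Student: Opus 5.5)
Our plan is to work directly with the objective in~\eqref{eq:sampleMobius} and to show that it transforms nicely under the Möbius group. Write $L_n(\varphi; x_1,\ldots,x_n)=\frac1n\sum_{i=1}^n \log f(x_i;\varphi)$, and let $g=\mathcal{M}_{R,\varphi_0}$ with $\varphi_0\in\mathbb{B}^{d+1}$. The key identity is the change-of-variables formula behind Theorem~2.3 of \citet{Kato2020}. If $X\sim\mathrm{SC}(\varphi)$, then $g(X)\sim\mathrm{SC}(g(\varphi))$, so
\begin{equation*}
f(g(x);g(\varphi))\,J_g(x)=f(x;\varphi),
\qquad x\in\mathbb{S}^d,
\end{equation*}
where $J_g(x)>0$ is the Jacobian of $g$ restricted to $\mathbb{S}^d$ with respect to $\sigma_d$. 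This holds for every $\varphi\in\mathbb{B}^{d+1}$, because both sides are continuous densities of the same law, so they agree pointwise. The crucial feature is that $J_g$ depends on $x$ and $g$ but not on $\varphi$. As a sanity check, taking $\varphi=0$ gives $J_g(x)=f(x;0)/f(g(x);g(0))$, which is explicit.

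Taking logarithms and averaging over the sample gives
\begin{equation*}
L_n\big(g(\varphi); g(x_1),\ldots,g(x_n)\big)=L_n(\varphi;x_1,\ldots,x_n)-\frac1n\sum_{i=1}^n\log J_g(x_i).
\end{equation*}
The last term does not depend on $\varphi$. Next, $g$ maps $\mathbb{B}^{d+1}$ bijectively onto itself. It maps the sphere onto itself, it has inverse $\mathcal{M}_{R^T,-R\varphi_0}$ which is also in $\mathcal{G}$ with parameter in the ball, and $g(0)=R\varphi_0\in\mathbb{B}^{d+1}$. Since a Möbius map is a homeomorphism of $\bar{\mathbb R}^{d+1}$, it sends the connected ball to a component of the complement of $\mathbb{S}^d$, and that component must be the ball because it contains $R\varphi_0$. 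Hence maximizing the left-hand side over $\psi=g(\varphi)\in\mathbb{B}^{d+1}$ is the same as maximizing $L_n(\varphi;x)$ over $\varphi$. So the set of maximizers for the transformed data is exactly $g$ applied to the set of maximizers for the original data.

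To finish, we invoke Proposition~\ref{Prop:ML}(i). If $(x_1,\ldots,x_n)\in\mathcal{X}^n$, then $(g(x_1),\ldots,g(x_n))\in\mathcal{X}^n$ as well, since $g$ is injective on $\mathbb{S}^d$. Both Möbius means are therefore unique, and the argmax relation reduces to the claimed identity $\hat\phi_n(g(x_1),\ldots,g(x_n))=g(\hat\phi_n(x_1,\ldots,x_n))$.

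The main obstacle is justifying the Jacobian identity with a $\varphi$-free factor. Our first choice is to derive it from Theorem~2.3 of \citet{Kato2020}, using continuity of the densities. If a more self-contained route is preferred, an alternative is to work with the gradient condition~\eqref{eq:meanzero}. One would show that $\mathcal{M}_{I,-g(\varphi)}\circ g=Q\circ\mathcal{M}_{I,-\varphi}$ on $\mathbb{S}^d$ for some rotation $Q$. This holds because the left-hand composition is a Möbius map fixing $0$, and such maps restrict to rotations on $\mathbb{S}^d$. Since $Q$ is linear, zero sums are preserved, and uniqueness of the solution of~\eqref{eq:meanzero} then gives the result.
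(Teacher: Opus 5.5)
Your main argument is correct, and it takes a genuinely different route from the paper's. The paper works with the estimating equation~\eqref{eq:meanzero}. Via Lemma~2.1 of \citet{Kato2020} and the reflection identity~\eqref{eq:phi>1}, it shows that $\mathcal{M}_{I,-\hat{\phi}_n^{\ast}}(z_i)$ is an orthogonal matrix applied to $\mathcal{M}_{I,\mathcal{M}_{I,\varphi}(-R^T\hat{\phi}_n^{\ast})}(x_i)$. It then uses uniqueness of the solution of~\eqref{eq:meanzero} to get $-\hat{\phi}_n=\mathcal{M}_{I,\varphi}(-R^T\hat{\phi}_n^{\ast})$. Because those identities need nonzero parameters, it must treat separately the cases $\varphi\neq0$ and $\hat{\phi}_n^{\ast}\neq0$, $\varphi=0$, and $\hat{\phi}_n^{\ast}=0$.

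You instead use the transformation-model structure of the likelihood itself. The identity $f(g(x);g(\varphi))J_g(x)=f(x;\varphi)$ has a $\varphi$-free Jacobian, and together with $g(\mathbb{B}^{d+1})=\mathbb{B}^{d+1}$ (which the paper uses only implicitly) it carries the whole argmax set onto the argmax set. This is the classical MLE-equivariance argument. It needs no case analysis and no explicit composition formulas, and it uses only uniqueness of the maximizer from Proposition~\ref{Prop:ML}(i), not the fact that~\eqref{eq:meanzero} characterizes the maximizer. What the paper's algebraic route buys in exchange is the explicit residual identity~\eqref{asi}, which it reuses in the proof of Theorem~\ref{th:invariance}.

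Your fallback gradient-based route is essentially the paper's argument in streamlined form. The observation that an element of $\mathcal{G}$ fixing $0$ is a rotation yields the residual identity in one line and removes the cases. There is one small slip there. The map fixing $0$ is $\mathcal{M}_{I,-g(\varphi)}\circ g\circ\mathcal{M}_{I,\varphi}$, because $\mathcal{M}_{I,-\psi}(\psi)=0$. It is not $\mathcal{M}_{I,-g(\varphi)}\circ g$ itself, which sends $\varphi$, rather than $0$, to $0$.
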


The next result shows that $\hat{\phi}_n$ is a $\sqrt{n}$-consistent and asymptotically normal estimator of the population M\"{o}bius mean and that it admits a Bahadur representation.

\begin{Prop}
	\label{PropBahadur}
	Let ${\rm P}$ be a non-atomic probability measure on~$\mathbb{S}^d$ and denote as~$\phi$ its M\"{o}bius mean. For~$n\geq 3$, let~$X_1,\ldots,X_n$ form a random sample from~${\rm P}$ and denote as~$\hat{\phi}_n$ the corresponding (almost surely unique) sample M\"{o}bius mean. Then, denoting as~$\stackrel{\mathcal{L}}{\to}$ convergence in distribution,
	\begin{eqnarray}
		\sqrt{n} ( \hat{\phi}_n - \phi)
		&	=&
		- \frac{1}{\sqrt{n}}W_{\phi}^{-1} \sum_{i = 1}^n \mathcal{M}_{I, - \phi} (X_i) + o_{\rm P} (1)
		\\[2mm]
		& \stackrel{\mathcal{L}}{\to} &
		\mathcal{N}(0,
		W_{\phi}^{-1} \E \big[ \mathcal{M}_{I, - \phi} (X_1) \mathcal{M}_{I, - \phi} (X_1)^T \big] W_{\phi}^{-1}
		)
		,
		\nonumber
	\end{eqnarray}
	with $W_{\phi} := 2 (1 - \| \phi \|^2)^{-1}  ( \E[ \mathcal{M}_{I, - \phi} (X_1) \mathcal{M}_{I, - \phi} (X_1)^T ] - I)$.
\end{Prop}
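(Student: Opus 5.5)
The plan is to treat $\hat{\phi}_n$ as a Z-estimator defined by the estimating equation~\eqref{eq:meanzero}, $\Psi_n(\varphi):=n^{-1}\sum_{i=1}^n \mathcal{M}_{I,-\varphi}(X_i)=0$, with population counterpart $\Psi(\varphi):=\E[\mathcal{M}_{I,-\varphi}(X)]$. By Proposition~\ref{prop:Mobunique}, $\phi$ is the unique zero of $\Psi$ in $\mathbb{B}^{d+1}$. By Proposition~\ref{Prop:ML}(ii), $\hat{\phi}_n$ is almost surely the unique zero of $\Psi_n$. The argument then runs in three steps: consistency, a first-order expansion of $\Psi_n$ around $\phi$, and the central limit theorem. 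The result is stated for general non-atomic ${\rm P}$, not just for the Cauchy model, so the argument cannot rely on Möbius equivariance to reduce to $\phi=0$.

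\textbf{Consistency.} I would use the M-estimation (argmax) route with the concave-like criterion $G_n(\varphi)=n^{-1}\sum_i\{\log(1-\|\varphi\|^2)-\log\|X_i-\varphi\|^2\}$. First, on each compact $K\subset\mathbb{B}^{d+1}$ the functions $\varphi\mapsto\log\|x-\varphi\|^2$ are uniformly Lipschitz in $\varphi$, since $\|x-\varphi\|\ge 1-\|\varphi\|$ is bounded away from zero. Hence the class is Glivenko--Cantelli and $G_n\to G$ uniformly on $K$ almost surely.

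Second, the main work of this step is to show that $\hat{\phi}_n$ cannot escape to the boundary. Suppose $\|\varphi\|\to1$ with $\varphi/\|\varphi\|\to x_0$. Then $G(\varphi)\to-\infty$, because non-atomicity gives ${\rm P}[X=x_0]=0<1/2$ and so the $\log(1-\|\varphi\|^2)$ term dominates. I expect this is where the proof of Proposition~\ref{prop:Mobunique} already supplies the needed estimate. I would obtain the same conclusion for $G_n$ uniformly in large $n$, using that every empirical atom has mass $1/n<1/2$ together with a uniform bound of the form $G_n(\varphi)\le \log(1-\|\varphi\|^2)\cdot c+O(1)$ on shells near the boundary. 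Together with uniqueness of the maximizer of $G$, the standard argmax theorem then gives $\hat{\phi}_n\to\phi$ in probability. \emph{This boundary control is the main obstacle.} If it proves delicate, a fallback is to exploit that $G$ is geodesically concave in the hyperbolic metric on the ball, which is the structure behind uniqueness, and apply a convexity-based consistency argument.

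\textbf{Expansion.} The map $\varphi\mapsto\mathcal{M}_{I,-\varphi}(x)=\frac{1-\|\varphi\|^2}{\|x-\varphi\|^2}(x-\varphi)-\varphi$ is smooth on a neighbourhood $K$ of $\phi$, with derivatives bounded uniformly in $x\in\mathbb{S}^d$. A mean-value expansion gives $0=\Psi_n(\hat{\phi}_n)=\Psi_n(\phi)+\dot\Psi_n(\tilde\varphi_n)(\hat{\phi}_n-\phi)$ with $\tilde\varphi_n$ between $\hat{\phi}_n$ and $\phi$. By the uniform law of large numbers for the derivative class and consistency, $\dot\Psi_n(\tilde\varphi_n)\to\dot\Psi(\phi)$ in probability.

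The remaining task is to identify $\dot\Psi(\phi)=W_\phi$. Writing $y=\mathcal{M}_{I,-\phi}(x)$, I would differentiate directly. A possible shortcut is the identity $\nabla G(\varphi)=\frac{2}{1-\|\varphi\|^2}\Psi(\varphi)$: since $\Psi(\phi)=0$, this gives $\dot\Psi(\phi)=\frac{1-\|\phi\|^2}{2}\nabla^2G(\phi)$. A direct computation at $\varphi=0$ gives $\partial_\varphi\mathcal{M}_{I,-\varphi}(x)|_{\varphi=0}=2xx^T-2I$, whose expectation is $2(\E[XX^T]-I)$. The general $\phi$ follows by composing with the Möbius group law, which yields the stated $W_\phi$ (up to the normalising constants I would verify). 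Invertibility of $W_\phi$ holds because $\E[YY^T]\preceq I$ with equality in a direction $v$ only if $v^TY=\pm1$ almost surely, which non-atomicity excludes.

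\textbf{Conclusion.} Solving the expansion for $\hat{\phi}_n-\phi$ yields the Bahadur representation $\sqrt n(\hat{\phi}_n-\phi)=-W_\phi^{-1}n^{-1/2}\sum_i\mathcal{M}_{I,-\phi}(X_i)+o_{\rm P}(1)$. The summands are i.i.d., bounded and centred by~\eqref{eq:Mobiusmean}. The multivariate CLT together with Slutsky's lemma then gives the stated normal limit with sandwich covariance, using that $W_\phi$ is symmetric.
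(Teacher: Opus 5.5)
\textbf{The gap is in the consistency step.} You are right that boundary control is the crux, but the ingredient you cite does not deliver it. That every atom of ${\rm P}_n$ has mass $1/n<\frac12$ only shows that, for each fixed $n$, $G_n(\varphi)\to-\infty$ as $\|\varphi\|\to1$, which gives existence of $\hat{\phi}_n$. It gives nothing uniform in $n$. What controls $G_n$ near the boundary is the empirical mass of \emph{caps} around $\varphi/\|\varphi\|$, not of single points.

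In the bound \eqref{tobeusedinGC} applied to ${\rm P}_n$, your constant is $c=1-2c_a({\rm P}_n)$, with $c_a({\rm P}_n)=\sup_{v\in\mathbb{S}^d}{\rm P}_n[v^TX>a]$. The $O(1)$ term depends on $a$. Nothing in your sketch keeps $c$ away from $0$ as $n$ grows. The missing ingredients are:
\begin{enumerate}
\item[(a)] Lemma~\ref{Lem:Mobunique}: a compactness argument turns ${\rm P}[X=x]<\frac12$ for all $x$ into a single $a\in(0,1)$ with $c_a({\rm P})<\frac12$.
\item[(b)] $c_a({\rm P}_n)\to c_a({\rm P})$ in probability, because spherical caps form a VC class.
\end{enumerate}
Then consider the event $c_a({\rm P}_n)\le\frac12\{c_a({\rm P})+\frac12\}$, whose probability tends to one. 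On it, $G_n(\varphi)\le H(\|\varphi\|)$ for a deterministic $H$ with $H(t)\to-\infty$ as $t\to1^-$. Since $G_n(0)=0$, this confines $\hat{\phi}_n$ to $\{\|\varphi\|\le\tau\}$ for some $\tau<1$. That is exactly the uniform tightness the argmax theorem requires. Without it, consistency is not established, and your whole expansion step depends on it.

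Your fallback would also close the gap if carried out, and arguably more elegantly. The function $-m_\varphi(x)=\log\{\|x-\varphi\|^2/(1-\|\varphi\|^2)\}$ is the Busemann function of the Poincar\'e ball at the ideal point $x$. It is therefore geodesically convex in $\varphi$, so $G_n$ is geodesically concave. Take a small closed geodesic ball around $\phi$. Uniform convergence $G_n\to G$ on that ball, together with $\max G<G(\phi)$ on its boundary sphere, traps every maximizer of $G_n$ inside the ball with probability tending to one. This bypasses both Lemma~\ref{Lem:Mobunique} and the VC argument.

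The rest of your proposal is sound. It differs from the paper only in the normality step. The paper feeds three ingredients into Example~3.2.25 of \citetalias{Vaart2023}: the Lipschitz bound \eqref{eq:Lips}, an $O(\|h\|^4)$ second-order remainder bound, and $HG(\phi)=2(1-\|\phi\|^2)^{-1}W_\phi$. You instead linearize the estimating equation directly. That is more elementary here, because $\varphi\mapsto\mathcal{M}_{I,-\varphi}(x)$ is smooth with derivatives bounded uniformly in $x$ on compacts of $\mathbb{B}^{d+1}$. Two small corrections to that step:
\begin{itemize}
\item Use the integral form $\int_0^1\dot\Psi_n(\phi+t(\hat{\phi}_n-\phi))\,dt$. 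A single intermediate point $\tilde\varphi_n$ does not exist for vector-valued maps.
\item Your identity $\dot\Psi(\phi)=\frac{1-\|\phi\|^2}{2}HG(\phi)$, combined with the Hessian formula from the proof of Proposition~\ref{prop:Mobunique}, gives exactly $W_\phi$. No group-law computation is needed.
\end{itemize}
Incidentally, your remark that equivariance cannot be used here is wrong. Theorem~\ref{TheorIvarianceMobMean} does allow a reduction to $\phi=0$ for any non-atomic ${\rm P}$, since $\mathcal{M}_{I,-\phi}(X)$ has M\"{o}bius mean zero by Proposition~\ref{prop:Mobunique}. The paper reduces to $\phi=0$ in this way when proving Theorem~\ref{TheorConsistency}.
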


Note that if~$X_1\sim {\rm SC}(\phi)$, then~$W_{\phi}=-2d/\{(d+1)(1 - \| \phi \|^2)\}I$.

\section{Asymptotic null behaviour of the proposed tests}
\label{secNull}

The proposed tests reject the spherical Cauchy null hypothesis for large values of the test statistic~$\Dstat_n$ in~(\ref{TestStatistDn}), where~$\hat{\phi}_n$ is the M\"{o}bius mean of~${\rm P}_n$. As explained in Section~\ref{secinvariance}, distribution-freeness of~$\Dstat_n$ under the null hypothesis allows one to approximate arbitrarily well the required critical value by simulations. Since this may be computationally intensive for large sample sizes~$n$, we now provide the asymptotic null distribution of~$\Dstat_n$.

To state the result, denote as~$\nabla_\gamma\mu_\phi(u,s;\phi)$ the gradient of the map
\begin{equation}
	\label{Defmu}    
	\gamma
	\mapsto
	\mu_\phi(u,s;\gamma)
	:=
	{\rm P}_\phi[ u^T \mathcal{M}_{I,-\gamma}(X) \leq s ]
	-
	F_d(s)
\end{equation}
at~$\phi$, where~${\rm P}_\phi$ means that~$X\sim \mathrm{SC} (\phi)$ (we will show in the proof of Theorem~\ref{TheorAsymptDistr} below that the gradient is well-defined for all~$\phi\in\mathbb{B}^{d+1}$). We then have the following result.

\begin{Theor}
	\label{TheorAsymptDistr}
	Let~$W$ be a nonnegative bounded Borel measure on~$[0,1]$. Fix~$\phi\in\mathbb{B}^{d+1}$ and let $X_1,\ldots,X_n$ be a random sample from~$\mathrm{SC} (\phi)$. Let
	$$
	D_{n\ast}^{\phi}
	:=
	D_{n\ast}^{\phi}(X_1,\ldots,X_n)
	:=
	\frac{1}{n}
	\sum_{i,j=1}^n
	h_*^\phi(X_i,X_j)
	,
	$$
	with
	\begin{eqnarray*}
		\lefteqn{
			h_*^\phi(x,y)
			:=
			\int_{\mathbb{S}^d}
			\int_{-1}^1
			\bigg\{
			\mathbbm{1}[ u^T \mathcal{M}_{I, - \phi} (x) \leq s ]-F_d(s)
			- 
			(\nabla_\gamma\mu_\phi(u,s;\phi))^T
			W_{\phi}^{-1} \mathcal{M}_{I, - \phi} (x)
			\bigg\}
		}
		\\[2mm]
		& & 
		\hspace{-1mm}
		\times 
		\bigg\{
		\mathbbm{1}[ u^T \mathcal{M}_{I, - \phi} (y) \leq s ]-F_d(s)
		- 
		(\nabla_\gamma\mu_\phi(u,s;\phi))^T
		W_{\phi}^{-1} \mathcal{M}_{I, - \phi} (y)
		\bigg\}
		\,
		dW(F_d(s))
		d\nu_d(u)
		.
	\end{eqnarray*}
	Then, 
	\begin{equation}
		\label{AsymptDistrDn}
		\Dstat_n
		=
		D_{n\ast}^{\phi} + o_{\rm P}(1)
		\stackrel{\mathcal{L}}{\to}
		\sum_{k=1}^\infty \lambda_k Q_k
		,
	\end{equation}
	where~$\lambda_1,\lambda_2,\ldots$ are the eigenvalues of the operator~$A$ defined by
	\begin{equation}
		\label{OperatorA}
		(Aq)(x)
		=
		\int_{\mathbb{S}^d}
		h_*^0(x,y)
		q(y)
		\,
		d\nu_d(y)
	\end{equation}
	and
	where~$Q_1,Q_2,\ldots$ are \mbox{i.i.d.} $\chi^2_1$. 
\end{Theor}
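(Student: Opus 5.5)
Because of Corollary~\ref{CorDF}, $\Dstat_n$ has the same distribution under every ${\rm SC}(\phi)$. The plan is to first establish the representation $\Dstat_n=D_{n\ast}^{\phi}+o_{\rm P}(1)$ for general $\phi$, and then derive the limit law only at $\phi=0$. The representation for general $\phi$ is needed later for local alternatives; alternatively, one can simply observe that $D_{n\ast}^\phi(X_1,\ldots,X_n)$ with $X_i\sim{\rm SC}(\phi)$ is handled the same way as the $\phi=0$ case.

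\textbf{Step 1: linearization of the empirical process.} Write $\Dstat_n=\int\int \{\sqrt n\,\hat\mu_n(u,s;\hat\phi_n)\}^2\,dW(F_d(s))\,d\nu_d(u)$, where
$$\hat\mu_n(u,s;\gamma)=n^{-1}\sum_i \mathbbm{1}[u^T\mathcal{M}_{I,-\gamma}(X_i)\le s]-F_d(s).$$
Decompose
$$\sqrt n\,\hat\mu_n(u,s;\hat\phi_n)=\sqrt n\{\hat\mu_n(u,s;\hat\phi_n)-\mu_\phi(u,s;\hat\phi_n)-\hat\mu_n(u,s;\phi)\}+\sqrt n\,\hat\mu_n(u,s;\phi)+\sqrt n\,\mu_\phi(u,s;\hat\phi_n).$$
\begin{enumerate}[label=(\alph*)]
\item The first bracket is an empirical-process increment over the class of indicators of sets $\{x: u^T\mathcal{M}_{I,-\gamma}(x)\le s\}$. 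Each such set is a spherical cap, since Möbius maps send caps to caps, so the class is VC. It is therefore $\rm P$-Donsker, and stochastic equicontinuity plus $\hat\phi_n\to\phi$ (Proposition~\ref{PropBahadur}) make this term $o_{\rm P}(1)$ uniformly in $(u,s)$.
\item For the last term, note $\mu_\phi(u,s;\phi)=0$ by \eqref{eq:CauchyUnif}. Differentiability of $\gamma\mapsto\mu_\phi(u,s;\gamma)$ then gives $\sqrt n\,\mu_\phi(u,s;\hat\phi_n)=(\nabla_\gamma\mu_\phi(u,s;\phi))^T\sqrt n(\hat\phi_n-\phi)+o_{\rm P}(1)$.
\item Substituting the Bahadur representation of Proposition~\ref{PropBahadur} into (b) yields the integrand of $h_*^\phi$ evaluated at $X_i$, averaged with a factor $n^{-1/2}$.
\end{enumerate}
Since $W$ is bounded, uniform $o_{\rm P}(1)$ remainders in $(u,s)$ square-integrate to $o_{\rm P}(1)$, which gives $\Dstat_n=D_{n\ast}^\phi+o_{\rm P}(1)$.

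\textbf{Step 2: the gradient.} Differentiability of $\mu_\phi$, with a gradient that is bounded and continuous in $(u,s)$ and a Taylor remainder that is uniform in $(u,s)$, comes from an explicit computation. Write
$${\rm P}_\phi[u^T\mathcal{M}_{I,-\gamma}(X)\le s]={\rm P}_0[u^T\mathcal{M}_{I,-\gamma}(\mathcal{M}_{I,\phi}(V))\le s].$$
The composition of Möbius maps is again Möbius, so this probability is the ${\rm SC}(\eta(\gamma))$-probability of a fixed cap, where $\eta$ is smooth in $\gamma$. Differentiating the integral of the smooth density \eqref{eq:densCauchy} in $\eta$ over a cap gives the gradient via dominated convergence, and compactness in $(u,s)$ gives uniformity.

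\textbf{Step 3: limit law.} The kernel $h_*^\phi(x,y)$ is a symmetric kernel of the form $\int g(x;u,s)g(y;u,s)$ with $\E[g(X;u,s)]=0$, because both the indicator part and $\mathcal{M}_{I,-\phi}(X)$ have mean zero. The kernel is therefore positive semidefinite and degenerate. Because $h_*^\phi(x,y)=h_*^0(\mathcal{M}_{I,-\phi}x,\mathcal{M}_{I,-\phi}y)$ up to the invariance already used, the law reduces to that of $D_{n\ast}^0$ with uniform $V_i$. For $D_{n\ast}^0$, I would use the Mercer expansion
$$h_*^0(x,y)=\sum_k\lambda_k e_k(x)e_k(y)$$
in $L^2(\nu_d)$, with $\lambda_k\ge0$ and $\sum\lambda_k=\E h_*^0(V,V)<\infty$, the latter holding by boundedness of $g$. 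This gives
$$D_{n\ast}^0=\sum_k\lambda_k\Big(n^{-1/2}\sum_i e_k(V_i)\Big)^2.$$
Finite-dimensional CLT together with the trace-class tail bound $\E\sum_{k>K}\lambda_k(\cdot)^2=\sum_{k>K}\lambda_k\to0$ yields the limit $\sum\lambda_kQ_k$.

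\textbf{Main obstacle.} I expect Step 1(a), the uniform stochastic equicontinuity with an estimated Möbius parameter, to be the main difficulty. The cap/VC structure is what makes it manageable, but one must handle the dependence on $\gamma$ inside the indicator carefully. That is the step I would verify first.
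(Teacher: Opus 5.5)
Your proposal is correct and follows essentially the paper's route. You use the same three-term split of $\sqrt n\,\hat\mu_n(u,s;\hat\phi_n)$ into a random-parameter empirical-process increment, the fixed-$\phi$ process, and the drift $\sqrt n\,\mu_\phi(u,s;\hat\phi_n)$ linearized via the Bahadur representation (the paper packages this as Theorem~\ref{TheorRandles}), followed by VC/Donsker equicontinuity and the degenerate $V$-statistic limit, and your reduction to $\phi=0$ through Corollary~\ref{CorDF} is what identifies the eigenvalues with those of $h_*^0$ under $\nu_d$. The remaining differences are minor: you use caps-to-caps instead of the polynomial VC-subgraph argument and a uniform Taylor bound instead of $L^2(M)$-differentiability, and the one point you leave open in Step~1(a), $\sup_{u,s}{\rm P}_\phi\{f(\cdot,u,s;\hat\phi_n)-f(\cdot,u,s;\phi)\}^2=o_{\rm P}(1)$, is exactly what the paper's band bound~(\ref{keybound}) together with uniform continuity of $F_d$ supplies.
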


As expected since exact distribution-freeness implies asymptotic distribution-freeness, the asymptotic null distribution of~$\Dstat_n$ does not depend on~$\phi$ either: this distribution is fully determined by the self-adjoint and positive semi-definite operator~$A$,\footnote{In particular, $\lambda_k\geq 0$ for all~$k$.}
whose kernel~$h_*^0$ rewrites 
\begin{eqnarray}
	\lefteqn{
		\hspace{2mm}
		h_*^0(x,y)
		=
		\int_{\mathbb{S}^d}
		\int_{-1}^1
		\bigg\{
		\mathbbm{1}[ u^T x \leq s ]-F_d(s)
		+ 
		\frac{(d+1) \Gamma(\frac{d+1}{2})}{\sqrt{\pi}d\Gamma(\frac{d}{2})}
		(1-s^2)^{d/2}
		u^T x
		\bigg\}
	}
	\label{Exprhzerostar}
	\\[2mm]
	& & 
	\hspace{1mm}
	\times 
	\bigg\{
	\mathbbm{1}[ u^T y \leq s ]-F_d(s)
	+ 
	\frac{(d+1) \Gamma(\frac{d+1}{2})}{\sqrt{\pi}d\Gamma(\frac{d}{2})}
	(1-s^2)^{d/2}
	u^T y
	\bigg\}
	\,
	dW(F_d(s))
	d\nu_d(u)
	\nonumber
\end{eqnarray}
(this expression will be obtained in the proof of Corollary~\ref{CorolNull} below).
More importantly, Theorem~\ref{TheorAsymptDistr} allows us to relate the null asymptotic distribution of~$\Dstat_n$ to that of its antecedent statistic~$P_{n}^W$ based on the kernel~$\psi$ in~(\ref{DefPsi}). Theorem~6.4.1B in \citealp{Serfling1980} entails that, under the null hypothesis of uniformity,
$$
P_{n}^W
\stackrel{\mathcal{L}}{\to}
\sum_{k=1}^\infty \lambda^\psi_k Q_k
,
$$
where~$\lambda^\psi_1,\lambda^\psi_2,\ldots$ are the eigenvalues of the operator~$A_\psi$ defined by 
$$
(A_\psi q)(x)
=
\int_{\mathbb{S}^d}
\psi(x,y)
q(y)
\,
d\nu_d(y)
$$
and where~$Q_1,Q_2,\ldots$ are still \mbox{i.i.d.} $\chi^2_1$. Since $\psi(Rx,Ry)=\psi(x,y)$ for all rotations~$R$, the operator~$A_\psi$ commutes with the $SO(d+1)$-action on~$L^2(\mathbb S^d,\nu_d)$, in the sense that~$A_\psi(\rho_Rq)=\rho_R(A_\psi q)$ with~$\rho_R q:=q\circ R^{-1}$.
Thus, $A_\psi$ acts as a scalar, $\eta_\ell^\psi$ say, on each spherical-harmonic subspace~$\mathcal H_\ell$ of degree~$\ell$. In other words, $\eta_\ell^\psi$ is an eigenvalue of~$A_\psi$ with multiplicity~$\dim\mathcal H_\ell$.\footnote{More precisely, the multiplicity is at least~$\dim\mathcal H_\ell$ since the $\eta_\ell$'s need not be pairwise different.} It follows that, under the null hypothesis of uniformity, 
\begin{equation}
	\label{AsymptDistrPnW}
	P_n^W
	\stackrel{\mathcal{L}}{\to}
	\sum_{\ell=1}^\infty 
	\eta^\psi_\ell Z_\ell
	,
\end{equation}
where~$Z_1,Z_2,\ldots$ are mutually independent with
$
Z_\ell\sim\chi^2_{\dim(\mathcal H_\ell)}
$. 

Since~(\ref{Exprhzerostar}) entails that~$h_*^0(Rx,Ry)=h_*^0(x,y)$ for all rotations~$R$, the asymptotic null distribution of~$D_n$ admits an expression similar to the one in~(\ref{AsymptDistrPnW}).
We have the following result.

\begin{Corol}
	\label{CorolNull}
	Let~$W$ be a nonnegative bounded Borel measure on~$[0,1]$. Then, under the null hypothesis,
	\begin{equation}
		\label{AsymptDistrDnCorol}
		\Dstat_n
		\stackrel{\mathcal{L}}{\to}
		\sum_{\ell=2}^\infty 
		\eta^\psi_\ell Z_\ell
		,
	\end{equation}
	where~$\eta^\psi_\ell$ denotes the eigenvalue of~$A_\psi$ associated with the spherical-harmonic subspace~$\mathcal H_\ell$ of degree~$\ell$, and where~$Z_2,Z_3,\ldots$ are mutually independent with
	$
	Z_\ell\sim\chi^2_{\dim(\mathcal H_\ell)}
	$.
\end{Corol}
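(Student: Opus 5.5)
The plan is to identify the operator~$A$ in~(\ref{OperatorA}) explicitly as $A=(\mathrm{Id}-\Pi_1)A_\psi(\mathrm{Id}-\Pi_1)$. Here $\Pi_1$ denotes the $L^2(\mathbb S^d,\nu_d)$-orthogonal projection onto the degree-one spherical harmonics $\mathcal H_1=\{x\mapsto a^Tx\}$. Once this is established, Theorem~\ref{TheorAsymptDistr} gives the conclusion, because the eigenvalues of $A$ can then be read off harmonic space by harmonic space.

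\textbf{Step 1: the correction term in $h_*^0$.} First we make the correction term in $h_*^0$ explicit, which also proves~(\ref{Exprhzerostar}).
\begin{itemize}
\item \emph{The matrix $W_0$.} At $\phi=0$ the observation following Proposition~\ref{PropBahadur} gives $W_0=-\{2d/(d+1)\}I$.
\item \emph{The gradient $\nabla_\gamma\mu_0(u,s;0)$.} If $X\sim\mathrm{Unif}(\mathbb S^d)$, the invariance property of the spherical Cauchy family gives $\mathcal M_{I,-\gamma}(X)\sim\mathrm{SC}(-\gamma)$. Hence
$$\mu_0(u,s;\gamma)=\int \mathbbm{1}[u^Ty\le s]\,f(y;-\gamma)\,d\sigma_d(y)-F_d(s).$$
Differentiating under the integral (the density is smooth in $\gamma$ near $0$ and bounded) and using $\nabla_\gamma\log f(y;-\gamma)|_{\gamma=0}=-2d\,y$, we get
$$\nabla_\gamma\mu_0(u,s;0)=-2d\,\E\big[\mathbbm{1}[u^TV\le s]\,V\big],\qquad V\sim\nu_d.$$
\item \emph{The resulting correction.} Combining the two,
$$-(\nabla_\gamma\mu_0)^TW_0^{-1}x=-(d+1)\,\E\big[\mathbbm{1}[u^TV\le s]\,V\big]^Tx.$$
\end{itemize}
By rotational symmetry, $\E[\mathbbm{1}[u^TV\le s]V]=\E[\mathbbm{1}[t\le s]\,t]\,u$ with $t=u^TV$ having the density from~(\ref{FdExpr}). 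Integrating $t(1-t^2)^{(d-2)/2}$ explicitly gives
$$\E[\mathbbm{1}[t\le s]\,t]=-\frac{\Gamma(\frac{d+1}{2})}{\sqrt\pi\, d\,\Gamma(\frac d2)}(1-s^2)^{d/2},$$
which yields~(\ref{Exprhzerostar}).

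\textbf{Step 2: the correction is exactly $-\Pi_1$.} For fixed $(u,s)$, the function $g_{u,s}(x)=\mathbbm{1}[u^Tx\le s]-F_d(s)$ has zero mean, so $\Pi_0g_{u,s}=0$. Since $\E[VV^T]=I/(d+1)$, its projection onto $\mathcal H_1$ is
$$(\Pi_1g_{u,s})(x)=(d+1)\,\E\big[\mathbbm{1}[u^TV\le s]\,V\big]^Tx.$$
Step~1 therefore shows that the integrand of $h_*^0$ is $((\mathrm{Id}-\Pi_1)g_{u,s})(x)\,((\mathrm{Id}-\Pi_1)g_{u,s})(y)$. Now integrate in $(u,s)$ with respect to $dW(F_d(s))\,d\nu_d(u)$ and use Fubini, which is justified because $W$ is bounded and all the functions involved are bounded. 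This gives
$$h_*^0(x,y)=\big((\mathrm{Id}-\Pi_1)_x(\mathrm{Id}-\Pi_1)_y\,\psi\big)(x,y),$$
and hence $A=(\mathrm{Id}-\Pi_1)A_\psi(\mathrm{Id}-\Pi_1)$.

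\textbf{Step 3: spectral conclusion.} $A_\psi$ acts as the scalar $\eta^\psi_\ell$ on each $\mathcal H_\ell$. Also $A_\psi$ kills constants, since $\int\psi(x,y)\,d\nu_d(y)=0$ by centering. Because $\Pi_1$ is the projection onto one of these invariant subspaces, it commutes with $A_\psi$. Consequently $A$ acts as $0$ on $\mathcal H_0\oplus\mathcal H_1$ and as $\eta^\psi_\ell$ on $\mathcal H_\ell$ for $\ell\ge2$. The eigenvalue list $(\lambda_k)$ in Theorem~\ref{TheorAsymptDistr} is therefore $\eta^\psi_\ell$ repeated $\dim\mathcal H_\ell$ times for $\ell\ge2$, plus zeros. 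Grouping the independent $\chi^2_1$ variables $Q_k$ within each $\mathcal H_\ell$ into a $\chi^2_{\dim\mathcal H_\ell}$ variable gives~(\ref{AsymptDistrDnCorol}).

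The main obstacle is Step~1: the gradient computation and the exact matching of constants, which must show that the estimation correction coincides with $-\Pi_1$ rather than some other multiple. Writing $\mu_0$ as an integral against the $\mathrm{SC}(-\gamma)$ density, instead of differentiating the indicator, is the device that keeps this step tractable. The rest is bookkeeping of multiplicities, including the case where some $\eta^\psi_\ell$ coincide, which is harmless after grouping.
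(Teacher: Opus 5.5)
Your proposal is correct and follows essentially the same route as the paper: the same computation of $\nabla_\gamma\mu_0(u,s;0)$ through the $\mathrm{SC}(-\gamma)$ density yields~(\ref{Exprhzerostar}), and the eigenvalues are then read off degree by degree. Your identification of the correction term as $-\Pi_1 g_{u,s}$, giving $A=(\mathrm{Id}-\Pi_1)A_\psi(\mathrm{Id}-\Pi_1)$, is a cleaner and more explicit form of the paper's final step, where it checks $A1=0$ and $Aq_v=0$ directly and then notes that the correction only affects the degree-one component.
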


This result shows that the null asymptotic distribution of~$\Dstat_n$ is obtained from that of~$P_n^W$ by dropping the term associated with the spherical-harmonic subspace~$\mathcal H_1$.
Note that, as soon as~$W((0,1))>0$, this changes the null asymptotic distribution since we then have
$$
\eta_1^\psi
=
\bigg\{
\frac{\Gamma(\frac{d+1}{2})}{\sqrt{\pi}d\Gamma(\frac{d}{2})}
\bigg\}^2
\int_{-1}^1
(1-s^2)^d
\,
dW(F_d(s))
>
0
$$
(this expression of~$\eta_1^\psi$ can be obtained by proceeding as in the proof of Corollary~\ref{CorolNull}). In particular, for the Cram\'{e}r--von Mises weight function~$W(x)=x$, this yields
$\eta_1^\psi=1/(2\pi^2)$, $1/30$, $35/(144\pi^2)$ for~$d=1,2,3$, respectively, which agrees with \cite{GarciaPortugues2023} (see their
Equation~(33) and Proposition~3.1). 

We conducted a Monte Carlo study to explore the finite-sample relevance of these theoretical results. For each dimension~$d\in\{1,2,3\}$, we generated $M=10,\!000$ mutually independent random samples of size~$n=200$ from the uniform distribution on~$\mathbb{S}^d$. In each sample, we evaluated the test statistics~$\Dstat_n$ and~$P_{n}^W$. For each dimension~$d$ and each test statistic, Figure~\ref{Fig1} reports a histogram of the resulting~$M$ values. The figure also shows estimated densities of the corresponding null asymptotic distributions (these are kernel density estimates using the default kernel and bandwidth choices of the \textsf{R} function \texttt{density}, obtained from $10,\!000$ random variables generated from the corresponding asymptotic distribution). Obviously, the excellent agreement between the histograms and the estimated densities strongly supports our theoretical null results.

\begin{figure}[h!]
	\begin{center}
		\includegraphics[width=\textwidth]{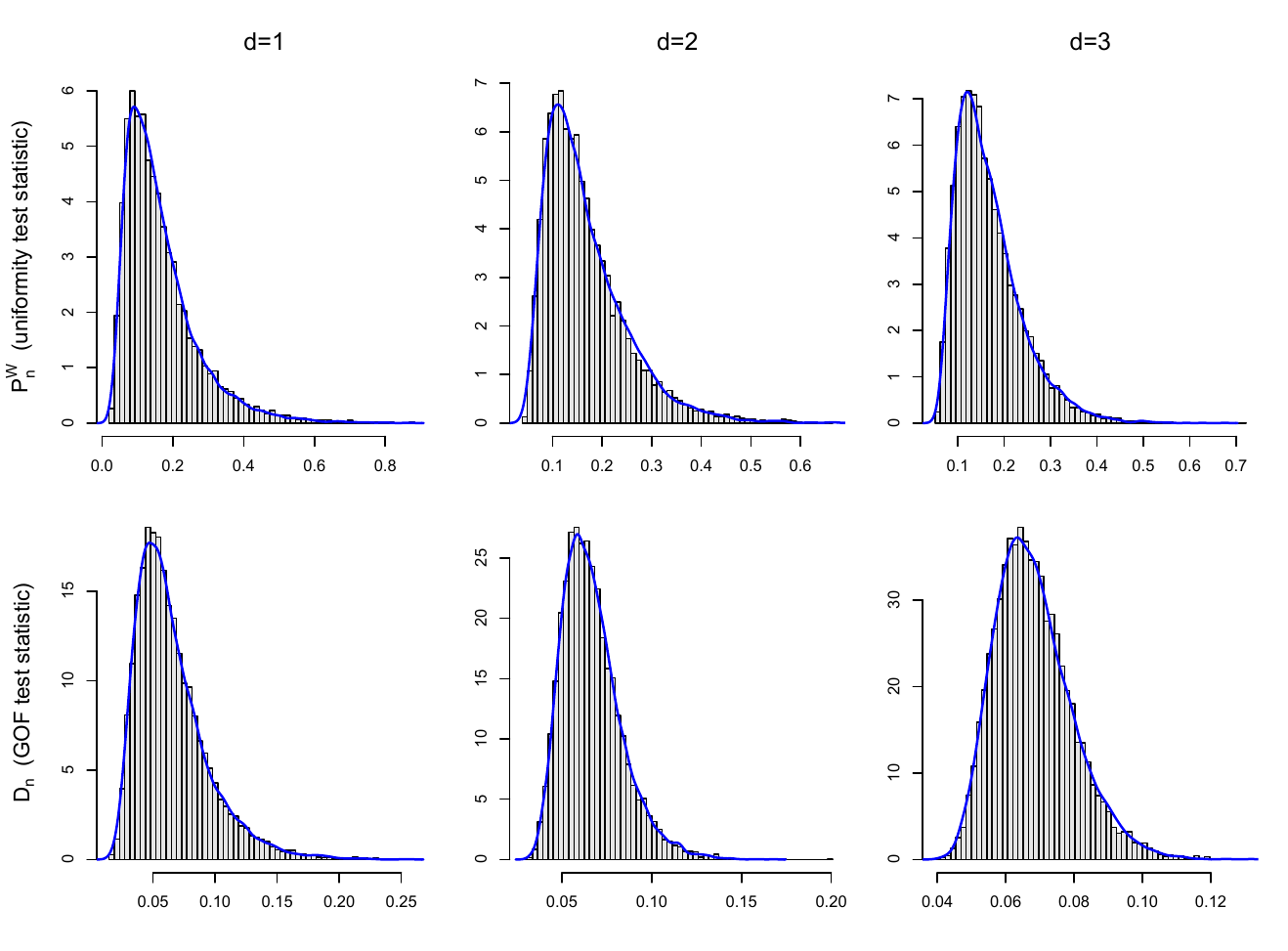}
		\caption{Monte Carlo null distributions of~$P_{n}^W$ (1st row) and~$\Dstat_n$ (2nd row) for $d=1,2,3$. 
			The histograms are based on $M=10,\!000$ independent samples of size $n=200$ from the uniform distribution on~$\mathbb{S}^d$. 
			The solid curves are kernel density estimates based on $10,\!000$ independent draws from the corresponding asymptotic null distributions.}
		\label{Fig1}
	\end{center}
\end{figure}

\section{Asymptotic non-null behaviour of the proposed tests}
\label{secNonNull}

We start with consistency against general fixed alternatives. We have the following result.

\begin{Theor}
	\label{TheorConsistency}
	Let~$\rm P$ be a non-atomic probability measure on~$\mathbb{S}^d$ with M\"{o}bius mean $\phi \in \mathbb{B}^{d+1}$,
	and let~$X_1, \ldots, X_n$ be a random sample from~$\rm P$. Assume that~$W$ is a nonnegative bounded Borel measure on~$[0,1]$ such that
	\begin{equation*}
		e_{\rm P}
		:=
		\int_{\mathbb{S}^d}
		\int_{-1}^1
		\{{\rm P}[ u^T \mathcal{M}_{I, -\phi} (X_1) \leq s ]-F_d(s)\}^2
		\,
		dW(F_d(s))
		d\nu_d(u) > 0.
	\end{equation*}
	Then, 
	${\rm P}[\Dstat_{n}>c]\to 1$ for any~$c>0$, so that the corresponding goodness-of-fit test is consistent under~$\rm P$.  
\end{Theor}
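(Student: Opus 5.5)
The plan is to show that $\Dstat_n/n \to e_{\rm P}$ in probability. Since $e_{\rm P}>0$, this gives $\Dstat_n \geq n e_{\rm P}/2$ with probability tending to one, and hence ${\rm P}[\Dstat_n>c]\to 1$ for every $c>0$. To set this up, rewrite~(\ref{TestStatistDn}) through~(\ref{Edu})--(\ref{DefPsi}) as
$$
\frac{\Dstat_n}{n}=\int_{\mathbb{S}^d}\int_{-1}^1\{\hat F_{n,u}(s)-F_d(s)\}^2\,dW(F_d(s))\,d\nu_d(u),
$$
where $\hat F_{n,u}(s):=\frac1n\sum_{i=1}^n\mathbbm{1}[u^T\mathcal{M}_{I,-\hat\phi_n}(X_i)\leq s]$. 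Also set $G_u(s):={\rm P}[u^T\mathcal{M}_{I,-\phi}(X_1)\leq s]$, so that $e_{\rm P}=\int\!\int\{G_u(s)-F_d(s)\}^2\,dW\,d\nu_d$. All integrands lie in $[0,1]$ and $W$ is bounded. By dominated convergence it is therefore enough to show that $\sup_{u,s}|\hat F_{n,u}(s)-G_u(s)|\to 0$ in probability, or even just pointwise convergence in probability for $W\otimes\nu_d$-almost every $(u,s)$. In the pointwise case I would pass to almost surely convergent subsequences and apply dominated convergence.

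The first step splits $\hat F_{n,u}(s)-G_u(s)=\{\hat F_{n,u}(s)-F_{n,u}^\phi(s)\}+\{F_{n,u}^\phi(s)-G_u(s)\}$, where $F^\phi_{n,u}$ is the same empirical cdf but with the true $\phi$ in place of $\hat\phi_n$. The second bracket tends to zero uniformly in $(u,s)$ almost surely by Glivenko--Cantelli, because half-spaces $\{z: u^Tz\leq s\}$ form a VC class. The second step uses Proposition~\ref{PropBahadur}, which gives $\hat\phi_n\to\phi$ in probability. On $\{\|\hat\phi_n-\phi\|\leq\delta\}$ with $\delta$ small and $\phi$ in the open ball, the map $(\gamma,x)\mapsto\mathcal{M}_{I,-\gamma}(x)$ is jointly uniformly continuous on $\bar B(\phi,\delta)\times\mathbb{S}^d$, since its denominator $\|x-\gamma\|^2$ is bounded away from zero there. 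Hence $\sup_i\|\mathcal{M}_{I,-\hat\phi_n}(X_i)-\mathcal{M}_{I,-\phi}(X_i)\|\leq\epsilon_n\to 0$ in probability. This yields the sandwich
$$
F^\phi_{n,u}(s-\epsilon_n)\leq\hat F_{n,u}(s)\leq F^\phi_{n,u}(s+\epsilon_n).
$$
Combining it with the Glivenko--Cantelli step gives $\hat F_{n,u}(s)-G_u(s)\to0$ wherever $G_u$ is continuous at $s$, and the convergence is uniform over any set of $(u,s)$ on which the $G_u$ are equicontinuous.

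The main obstacle is controlling the atoms of $G_u$, that is, hyperplane masses ${\rm P}[u^T\mathcal{M}_{I,-\phi}(X)=s]$, because P is only assumed non-atomic. For fixed $u$, $G_u$ has at most countably many atoms $s$. These may however carry positive $dW(F_d(s))$-mass, so I cannot dismiss them directly. Instead I would use the $\nu_d(u)$ integral. Write $Y:=\mathcal{M}_{I,-\phi}(X)$; P non-atomic implies $Y$ non-atomic, since $\mathcal{M}_{I,-\phi}$ is a bijection. Let $\mathcal{N}$ be the set of pairs $(u,s)$ for which the hyperplane $\{u^Tz=s\}$ carries positive mass. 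For each $s$, the set $\{u:(u,s)\in\mathcal{N}\}$ should be $\nu_d$-null. Indeed, for $s\neq\pm1$, distinct such $u$ give distinct hyperplanes, and only countably many of them can carry positive mass among any family whose pairwise intersections carry no mass. If that pairwise condition fails, I would fall back on the observation that hyperplanes with positive mass through a fixed lower-dimensional flat are countable, and argue by induction on dimension as in the standard Cramér--Wold/Fubini argument. For $s=\pm1$ the hyperplane meets the sphere in a single point, whose mass is zero by non-atomicity. Fubini then gives $(dW\otimes\nu_d)(\mathcal{N})=0$, so pointwise convergence off $\mathcal{N}$ together with dominated convergence yields $\Dstat_n/n\to e_{\rm P}$ in probability, which completes the argument.
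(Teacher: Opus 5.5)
Your route is sound in outline and genuinely different from the paper's. The paper first uses Theorem~\ref{th:invariance} to reduce to $\phi=0$. It then compares $\Dstat_n/n$ with the $V$-statistic $n^{-2}\sum_{i,j}\psi(X_i,X_j)\to e_{\rm P}$, writes the difference as $A_n+2B_n$ with $B_n^2\leq O_{\rm P}(1)A_n$, and controls $A_n$ via the bound~\eqref{keybound} from the proof of Theorem~\ref{TheorAsymptDistr} together with $\|\hat\phi_n\|=O_{\rm P}(n^{-1/2})$. You work directly at general $\phi$ and need only consistency of $\hat\phi_n$. In place of the $V$-statistic law of large numbers and~\eqref{keybound}, you use a Glivenko--Cantelli theorem for half-spaces plus a uniform-continuity sandwich, which is more self-contained. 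The price is that you must handle discontinuities of $G_u$, which the paper never meets. It integrates over $u$ with $X_i$ held fixed, and for every fixed $y\in\mathbb{S}^d$ the variable $u^Ty$ has the continuous cdf $F_d$ under $\nu_d$. So the indicator band integrates to $F_d(s+C\delta)-F_d(s-C\delta)$ whatever ${\rm P}$ is.

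The step that does not hold up as written is your null-set argument for $\mathcal N$ when $d\geq 3$. The pairwise-null-intersection argument only covers $d\leq 2$. Your fallback claim, that hyperplanes of positive mass through a fixed lower-dimensional flat are countable, is false as stated. Take $d=3$ and ${\rm P}$ uniform on a great circle, which is non-atomic with M\"{o}bius mean $0$. Every hyperplane $\{z:u^Tz=0\}$ with $u$ orthogonal to the plane of that circle has mass one, and there are uncountably many such $u$. The conclusion you want is nevertheless true, by Tonelli: for each $s$, $\int_{\mathbb{S}^d}{\rm P}[u^TY=s]\,d\nu_d(u)={\rm E}[\nu_d\{u:u^TY=s\}]=0$. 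Hence $\nu_d(\{u:(u,s)\in\mathcal N\})=0$ for \emph{every} $s$ (non-atomicity is not even used), and $\mathcal N$ is null for the product measure.

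Cleaner still, skip continuity points altogether. On $\{\epsilon_n\leq\epsilon\}$ your sandwich gives $|\hat F_{n,u}(s)-G_u(s)|\leq\Delta_n+{\rm P}[s-\epsilon<u^TY\leq s+\epsilon]$, with $\Delta_n:=\sup_{u,s'}|F^\phi_{n,u}(s')-G_u(s')|$. Integrating over $u$ first yields
\[
\bigg|\frac{\Dstat_n}{n}-e_{\rm P}\bigg|\leq 2W([0,1])\,\Delta_n+2\int_{-1}^1\{F_d(s+\epsilon)-F_d(s-\epsilon)\}\,dW(F_d(s))
\]
(with $F_d$ extended by $0$ and $1$ outside $[-1,1]$), which is exactly the paper's device. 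This also removes your subsequence step. As you phrased it, pointwise convergence in probability for each $(u,s)$ would not produce a common almost surely convergent subsequence; it works only because $\epsilon_n$ and $\Delta_n$ are uniform in $(u,s)$.
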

Theorem~\ref{TheorConsistency} guarantees that the proposed test is omnibus if the measure $W$ is absolutely continuous.
\begin{Corol}
	\label{CorConsistency}
	Assume that~$W$ is a nonnegative, bounded, and absolutely continuous Borel measure on~$[0,1]$ such that~$W([0, 1]) > 0$. Then, 
	the corresponding goodness-of-fit test is omnibus under alternatives with density in $L^2 (\nu_d)$. That is to say, given any absolutely continuous probability measure~$\rm P$ over~$\mathbb{S}^d$ with density in $L^2 (\nu_d)$ that is not spherical Cauchy and an i.i.d.~random sample $X_1, \ldots, X_n$ from~$\rm P$, we have that
	${\rm P}[\Dstat_{n}>c]\to 1$ for any~$c>0$.
\end{Corol}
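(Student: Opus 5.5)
The plan is to reduce Corollary~\ref{CorConsistency} to Theorem~\ref{TheorConsistency}. Let ${\rm P}$ be absolutely continuous with density $f\in L^2(\nu_d)$, and assume ${\rm P}$ is not spherical Cauchy. Since ${\rm P}$ is non-atomic, Proposition~\ref{prop:Mobunique} provides a unique M\"{o}bius mean $\phi\in\mathbb{B}^{d+1}$. It then suffices to show that $e_{\rm P}>0$.

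Put $Y:=\mathcal{M}_{I,-\phi}(X_1)$ and let ${\rm Q}$ denote its distribution. We argue by contradiction and assume $e_{\rm P}=0$. The integrand in $e_{\rm P}$ is the function $g(u,s):={\rm Q}[u^TY\le s]-F_d(s)$. This function is right-continuous in $s$. We also want continuity in $u$, which we obtain by showing that ${\rm Q}$ is absolutely continuous.

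\begin{itemize}
\item $\tilde{\mathcal M}_{I,-\phi}$ is a smooth diffeomorphism of $\mathbb{S}^d$ with a Jacobian bounded above and below, because $\|\phi\|<1$.
\item Hence ${\rm Q}$ has density $f\circ\tilde{\mathcal M}_{I,\phi}$ times a bounded factor, and this density still lies in $L^2(\nu_d)$.
\item Hyperplane sections have measure zero, so $s\mapsto{\rm Q}[u^TY\le s]$ is continuous, and by dominated convergence it is jointly continuous in $(u,s)$.
\end{itemize}

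Since $W$ is absolutely continuous with $W([0,1])>0$, it has a density $w\ge0$ with respect to Lebesgue measure on $[0,1]$, and $w>0$ on a set of positive measure. Changing variables $t=F_d(s)$, where $F_d$ is a strictly increasing $C^1$ bijection of $[-1,1]$ onto $[0,1]$, turns the measure $dW(F_d(s))$ into $w(F_d(s))F_d'(s)\,ds$. Then $e_{\rm P}=0$ forces $g(u,s)=0$ for $\nu_d$-a.e.\ $u$ and for a.e.\ $s$ in $S:=\{s: w(F_d(s))>0\}$, which has positive Lebesgue measure. Using the continuity of $g$, and shrinking $S$ to a set of density points, we get:
\[
{\rm Q}[u^TY\le s]=F_d(s)\quad\text{for all } u\in\mathbb{S}^d \text{ and all } s\in S.
\]

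The main obstacle is that this identity holds only for $s\in S$, not for all $s$. A Cram\'er--Wold argument therefore does not apply directly, and this is exactly where the $L^2$ assumption enters. I would expand the density $q$ of ${\rm Q}$ in spherical harmonics, $q=1+\sum_{\ell\ge1}q_\ell$ with $q_\ell\in\mathcal H_\ell$. By the Funk--Hecke formula,
\[
{\rm Q}[u^TY\le s]-F_d(s)=\sum_{\ell\ge1} c_\ell(s)\, q_\ell(u),
\]
where
\[
c_\ell(s)=\kappa_d\int_{-1}^{s}(1-t^2)^{(d-2)/2}\,C_\ell^{((d-1)/2)}(t)\,dt\,\big/\,C_\ell^{((d-1)/2)}(1)
\]
and $\kappa_d$ is the constant in~(\ref{FdExpr}). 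The series converges in $L^2(\nu_d)$ in $u$ because $q\in L^2(\nu_d)$. For fixed $s$, vanishing in $u$ means $c_\ell(s)q_\ell=0$ for every $\ell$. Each $c_\ell$ is a nonzero polynomial multiple of $(1-s^2)^{d/2}$; for instance $c_\ell(s)\propto(1-s^2)^{d/2}C_{\ell-1}^{((d+1)/2)}(s)$ by Rodrigues' formula. So $c_\ell$ has finitely many zeros, while $S$ is uncountable. Choosing, for each $\ell$, some $s\in S$ with $c_\ell(s)\ne0$ gives $q_\ell=0$ for all $\ell\ge1$. Hence $q\equiv1$ and ${\rm Q}=\nu_d$.

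Finally, ${\rm Q}=\nu_d$ means $\mathcal{M}_{I,-\phi}(X_1)\sim\mathrm{Unif}(\mathbb{S}^d)$, so ${\rm P}=\mathrm{SC}(\phi)$ by~\eqref{eq:CauchyUnif}. This contradicts the assumption that ${\rm P}$ is not spherical Cauchy, so $e_{\rm P}>0$, and Theorem~\ref{TheorConsistency} gives ${\rm P}[\Dstat_n>c]\to1$ for every $c>0$.
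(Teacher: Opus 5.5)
Your proof is correct, but it follows a different route from the paper's. Both proofs reduce, via Theorem~\ref{TheorConsistency}, to showing $e_{\rm P}>0$ for the law ${\rm Q}$ of $\mathcal{M}_{I,-\phi}(X_1)$.

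The paper writes $e_{\rm P}=\int\int\psi(x,y)f^Z(x)f^Z(y)\,d\nu_d(x)\,d\nu_d(y)$. It then inserts the Hilbert--Schmidt expansion of $\psi$ and the expansion of $f^Z$ in the eigenbasis of $A_\psi$, which gives $e_{\rm P}=\sum_k\lambda_k^\psi\alpha_k^2$. Finally it cites Corollary~5 of \cite{GarciaPortugues2023} for $\lambda_k^\psi>0$. The hypothesis $f^Z\in L^2(\nu_d)$ is what justifies that substitution.

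You instead work with the projected distribution functions themselves and use Funk--Hecke to diagonalize them degree by degree. Your Rodrigues identity $c_\ell(s)\propto(1-s^2)^{d/2}C^{((d+1)/2)}_{\ell-1}(s)$ is correct, and it gives the finitely-many-zeros property. In effect you reprove the cited positivity result: one checks that $\eta_\ell^\psi=\int_{-1}^1c_\ell(s)^2\,dW(F_d(s))$, which is positive exactly because $c_\ell$ has finitely many zeros and $W$ has a density. Your argument is therefore self-contained and shows precisely where absolute continuity of $W$ is used, at the cost of some special-function bookkeeping. For $d=1$, read $C_\ell^{(0)}(t)/C_\ell^{(0)}(1)$ as the Chebyshev polynomial $T_\ell(t)$; your formula for $c_\ell$ remains valid. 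The failure of $F_1$ to be $C^1$ at $\pm1$ is harmless for the change of variables.

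Your route also gives more than you claim, because neither the $L^2$ assumption nor the continuity step is needed. By Tonelli, $e_{\rm P}=0$ already implies that $u\mapsto{\rm Q}[u^TY\le s]-F_d(s)$ vanishes $\nu_d$-a.e.\ for a.e.\ $s\in S$. Integrating this function against any $H\in\mathcal H_\ell$, $\ell\geq1$, and using Fubini and Funk--Hecke yields $c_\ell(s)\int H\,d{\rm Q}=0$ for an arbitrary probability measure ${\rm Q}$. Your approach thus gives consistency against every non-atomic alternative that is not spherical Cauchy, whereas the paper's expansion argument genuinely relies on the $L^2$ density.
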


Corollary~\ref{CorConsistency} ensures that any positive measure $W$ that admits a density function provides an omnibus test. In particular, the goodness-of-fit test based on the Cramér--von Mises weight function $W (x) = x$ is omnibus. Note that the simple assumption on~$W$ in this result is sufficient but not necessary for omnibusness. For a necessary and sufficient condition, see Corollary~3.7 in \cite{GarciaPortugues2023}.

We now turn to local alternatives. We start with local alternatives to uniformity associated to densities (with respect to the surface area measure~$\sigma_d$) of the form
\begin{equation}
	\label{eq:fgn}
	f_{\tau_n}(x) 
	:=
	\frac{\Gamma\big( \frac{d + 1}{2} \big)}{2 \pi^{(d + 1)/2}}
	\bigg(
	1 + \frac{\tau_n (x)}{\sqrt{n}} 
	\bigg)
	,
\end{equation}
where~$(\tau_n)$ is a sequence of continuous functions on~$\mathbb{S}^d$ such that
\begin{equation}
	\label{eq:taun1}
	\int_{\mathbb{S}^d}
	\tau_n(u)
	\,
	d\nu_d(u)
	= 
	0
\end{equation}
for all $n \in \N$ and such that 
\begin{equation}
	\label{eq:taun2}
	\lim_{n \to \infty}
	\sup_{u \in \mathbb{S}^d} \vert \tau_n (u) - \tau (u) \vert 
	= 
	0
\end{equation}
for some function~$\tau$ on~$\mathbb{S}^d$ (clearly, $\tau$ is then continuous). Note that~$f_{\tau_n}$ might not be a density for~$n$ small because it might take negative values; however, since $\tau_n/\sqrt{n}$ converges uniformly to zero, $f_{\tau_n}$ will be a density for~$n$ large enough. We denote by~${\rm P}^{(n)}_{\tau_n}$ the hypothesis under which~$X_1,\ldots,X_n$ form a random sample from the density~$f_{\tau_n}$. Consider then the local log-likelihood ratio
\begin{equation}
	\label{eq:logratio}
	\Lambda_n 
	:=
	\log \bigg( \frac{d{\rm P}^{(n)}_{\tau_n}}{d{\rm P}^{(n)}_0} \bigg)
	=
	\sum_{i = 1}^n
	\,
	\log
	\bigg(
	1 + \frac{\tau_n (X_i)}{\sqrt{n}} 
	\bigg)
	,
\end{equation}
where~${\rm P}^{(n)}_0$ denotes the (null) hypothesis under which~$X_1,\ldots,X_n$ form a random sample from the uniform distribution on~$\mathbb{S}^d$. The following result clarifies the null asymptotic behavior of $\Lambda_n$.

\begin{Prop}
	\label{prop:contiguity}
	Under~${\rm P}^{(n)}_0$,
	\begin{equation*}
		\Lambda_n 
		=
		\frac{1}{\sqrt{n}} \sum_{i = 1}^n \tau (X_i)
		-
		\frac{1}{2} \E_0[\tau^2 (X_1)]
		+
		o_{\rm P} (1)
	\end{equation*}
	as~$n\to\infty$, where~$\E_0$ denotes expectation under~${\rm P}^{(n)}_0$. 
\end{Prop}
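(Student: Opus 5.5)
Under ${\rm P}^{(n)}_0$ the $X_i$ are i.i.d.\ uniform, and we have the exact expression \eqref{eq:logratio}, so the plan is a direct second-order Taylor expansion of the logarithm. Write $a_{ni}:=\tau_n(X_i)/\sqrt{n}$. Since $\tau_n\to\tau$ uniformly and $\tau$ is continuous on the compact $\mathbb{S}^d$, the sequence $(\tau_n)$ is uniformly bounded for $n$ large, say $\sup_n\sup_u|\tau_n(u)|\le C$. Hence $\max_i|a_{ni}|\le C/\sqrt{n}\to0$ deterministically. For $|a|\le 1/2$ we have $\log(1+a)=a-a^2/2+r(a)$ with $|r(a)|\le K|a|^3$. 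Therefore
$$
\Lambda_n=\frac{1}{\sqrt n}\sum_{i=1}^n\tau_n(X_i)-\frac{1}{2n}\sum_{i=1}^n\tau_n^2(X_i)+R_n,
\qquad |R_n|\le K\sum_{i=1}^n|a_{ni}|^3\le KC^3 n^{-1/2}\to0 .
$$

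Next I would handle the quadratic term. We have $|\tau_n^2-\tau^2|\le(|\tau_n|+|\tau|)\,|\tau_n-\tau|\to0$ uniformly, so $n^{-1}\sum_i\tau_n^2(X_i)=n^{-1}\sum_i\tau^2(X_i)+o(1)$ deterministically. The weak law of large numbers applies because $\tau^2$ is bounded, so this average converges in probability to $\E_0[\tau^2(X_1)]$.

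The linear term is the only point needing care, since the error $\tau_n-\tau$ is multiplied by $\sqrt n$. Set $\delta_n:=\tau_n-\tau$. By \eqref{eq:taun1}, $\E_0[\tau_n(X_1)]=0$, and dominated convergence (or uniform convergence) gives $\E_0[\tau(X_1)]=0$; hence $\E_0[\delta_n(X_1)]=0$. The variables $\delta_n(X_i)$ are therefore i.i.d.\ and centered, so
$$
\E_0\bigg[\bigg(\frac{1}{\sqrt n}\sum_{i=1}^n\delta_n(X_i)\bigg)^2\bigg]=\E_0[\delta_n^2(X_1)]\le\sup_u|\delta_n(u)|^2\to0 .
$$
By Chebyshev's inequality the difference is $o_{\rm P}(1)$, which replaces $\tau_n$ by $\tau$ in the linear term.

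Combining the three pieces yields the claimed expansion. The main potential obstacle is this replacement of $\tau_n$ by $\tau$ at the $\sqrt n$ scale: a mere sup-norm $o(1)$ bound would be insufficient without the exact centering \eqref{eq:taun1}, which is what makes the second-moment argument work. As a by-product, the CLT gives $\Lambda_n\stackrel{\mathcal{L}}{\to}\mathcal N(-\frac12\E_0[\tau^2],\E_0[\tau^2])$, which is the form needed for contiguity via Le Cam's first lemma.
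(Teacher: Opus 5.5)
Your proposal is correct and follows essentially the same route as the paper: a second-order Taylor expansion of the logarithm, negligibility of the cubic remainder, replacement of $\tau_n$ by $\tau$ in the linear term via the second-moment bound that relies on the exact centering \eqref{eq:taun1}, and a corresponding replacement in the quadratic term. The differences are only streamlinings: you bound the cubic remainder and the quadratic-term error deterministically using the uniform boundedness of $(\tau_n)$, whereas the paper uses an external lemma with an $O_{\rm P}$ remainder plus Markov's inequality and an $L_1$/Cauchy--Schwarz argument; you also state explicitly the weak-law step $n^{-1}\sum_i\tau^2(X_i)\to\E_0[\tau^2(X_1)]$, which the paper leaves implicit.
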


This likelihood expansion allows us to study the asymptotic behavior of~$\Dstat_n$ under the local alternatives~${\rm P}^{(n)}_{\tau_n}$. The following result shows that the asymptotic power of the proposed tests under these alternatives is determined by the spherical-harmonic components of the limiting direction~$\tau$.

\begin{Theor}
	\label{th:localpower}
	Let~$(\tau_n)$ satisfy~\eqref{eq:taun1}--\eqref{eq:taun2}, and let~$W$ be a nonnegative bounded Borel measure on~$[0,1]$. Let~$\Pi_\ell$ be the orthogonal projection in~$L^2(\nu_d)$ onto the spherical-harmonic subspace~$\mathcal H_\ell$ of degree~$\ell$. Then, under~${\rm P}^{(n)}_{\tau_n}$,
	\begin{equation}
		\label{AsymptDistrDnLocal}
		\Dstat_n
		\stackrel{\mathcal{L}}{\to}
		\sum_{\ell=2}^{\infty}
		\eta_\ell^\psi
		Z_\ell
		,
	\end{equation}
	where~$\eta^\psi_\ell$ denotes the eigenvalue of~$A_\psi$ associated with~$\mathcal H_\ell$, and where~$Z_2,Z_3,\ldots$ are mutually independent with
	$
	Z_\ell\sim\chi^2_{\dim(\mathcal H_\ell)}(\|\Pi_\ell\tau\|_{L^2(\nu_d)}^2)
	$.
\end{Theor}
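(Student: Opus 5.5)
The plan is to go through Le Cam's third lemma. Theorem~\ref{TheorAsymptDistr} with $\phi=0$ gives, under ${\rm P}^{(n)}_0$, $\Dstat_n=D^0_{n\ast}+o_{\rm P}(1)$. By Proposition~\ref{prop:contiguity}, $\Lambda_n$ is asymptotically $\mathcal N(-\frac12\E_0[\tau^2],\E_0[\tau^2])$, so ${\rm P}^{(n)}_{\tau_n}$ and ${\rm P}^{(n)}_0$ are mutually contiguous (Le Cam's first lemma). Hence the approximation $\Dstat_n=D^0_{n\ast}+o_{\rm P}(1)$ also holds under ${\rm P}^{(n)}_{\tau_n}$, and it suffices to find the limit law of $D^0_{n\ast}$ under the local alternatives.

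Next we diagonalize the kernel $h^0_\ast$. It is rotation invariant, symmetric, positive semidefinite and degenerate, with $\int h^0_\ast(x,y)\,d\nu_d(y)=0$. The latter holds because each factor in~(\ref{Exprhzerostar}) integrates to zero in $x$: $\mathbbm 1[u^Tx\le s]-F_d(s)$ integrates to zero and so does $u^Tx$. Rotation invariance lets us write
\[
h^0_\ast(x,y)=\sum_{\ell\ge1}\eta^\ast_\ell\sum_{m=1}^{\dim\mathcal H_\ell} Y_{\ell m}(x)Y_{\ell m}(y),
\]
with $\{Y_{\ell m}\}$ an orthonormal basis of $\mathcal H_\ell$ in $L^2(\nu_d)$; the $\ell=0$ term vanishes by degeneracy. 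Corollary~\ref{CorolNull}, or rather the computation in its proof, identifies the scalars: $\eta^\ast_1=0$ and $\eta^\ast_\ell=\eta^\psi_\ell$ for $\ell\ge2$. The reason is that the added term, linear in $u^Tx$, lies in $\mathcal H_1$ as a function of $x$, so only the degree-one projection of the kernel changes, and it is exactly cancelled. Then
\[
D^0_{n\ast}=\sum_{\ell\ge2}\eta^\psi_\ell\sum_m\Big(n^{-1/2}\sum_i Y_{\ell m}(X_i)\Big)^2 .
\]
We use the usual truncation argument, keeping $\ell\le L$ and controlling the tail uniformly in $n$ by $\sum_{\ell>L}\eta^\psi_\ell\dim\mathcal H_\ell\to0$. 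The tail control holds under ${\rm P}^{(n)}_0$ because $\E_0$ of the tail equals that sum, which is finite since $\operatorname{tr}A_\psi=\int\psi(x,x)\,d\nu_d(x)<\infty$ for bounded $W$. It transfers to ${\rm P}^{(n)}_{\tau_n}$ by contiguity.

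For fixed $L$, the vector formed by $S_{\ell m}:=n^{-1/2}\sum_i Y_{\ell m}(X_i)$, $2\le\ell\le L$, together with $\Lambda_n$, is jointly asymptotically normal under ${\rm P}^{(n)}_0$ by the multivariate CLT applied with the expansion of Proposition~\ref{prop:contiguity}. The covariance between $S_{\ell m}$ and $\Lambda_n$ is $\E_0[Y_{\ell m}\tau]=\langle\tau,Y_{\ell m}\rangle$. Le Cam's third lemma then gives that, under ${\rm P}^{(n)}_{\tau_n}$, the $S_{\ell m}$ are asymptotically independent $\mathcal N(\langle\tau,Y_{\ell m}\rangle,1)$. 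So $\sum_m S_{\ell m}^2\to\chi^2_{\dim\mathcal H_\ell}(\|\Pi_\ell\tau\|^2)$, independently across $\ell$. The continuous mapping theorem and letting $L\to\infty$ give~(\ref{AsymptDistrDnLocal}); the tail of the limit is also controlled because $\sum_\ell\eta_\ell^\psi\|\Pi_\ell\tau\|^2\le\|A_\psi\|\,\|\tau\|^2<\infty$.

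The main obstacle is the spectral identification $\eta^\ast_1=0$, $\eta^\ast_\ell=\eta^\psi_\ell$ for $\ell\ge2$. It requires checking that the correction term $(\nabla_\gamma\mu_0)^TW_0^{-1}x$ equals minus the $\mathcal H_1$-projection of $x\mapsto\mathbbm 1[u^Tx\le s]-F_d(s)$. The projection is $c_d(1-s^2)^{d/2}u^Tx$ with $c_d=-\frac{(d+1)\Gamma(\frac{d+1}{2})}{\sqrt\pi d\Gamma(\frac d2)}$, obtained by Funk--Hecke, and it matches~(\ref{Exprhzerostar}). I would take this from the proof of Corollary~\ref{CorolNull}. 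A secondary technical point is that $W$ is only bounded, so the kernel expansion converges in $L^2$ rather than pointwise; the truncation argument above is done in $L^2$ and avoids needing Mercer-type pointwise convergence.
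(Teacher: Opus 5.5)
Your proof is correct, and its skeleton is the paper's. You transfer $\Dstat_n=D^0_{n\ast}+o_{\rm P}(1)$ to ${\rm P}^{(n)}_{\tau_n}$ by contiguity, expand $h^0_\ast$ over spherical harmonics with the eigenvalues identified in the proof of Corollary~\ref{CorolNull}, truncate at degree $L$, and apply Le Cam's third lemma to the finite block.

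The genuine difference is how you control the truncation tail under the alternatives.
\begin{itemize}
\item The paper bounds $\E_{\tau_n}[|S_{n,n}-S_{L,n}|]\leq C\sum_{\ell>L}\eta^\psi_\ell\dim\mathcal H_\ell$ \emph{directly under} ${\rm P}^{(n)}_{\tau_n}$, using Lemma~\ref{LemLocalPowers}(i)--(ii). It then assembles the pieces in the Wasserstein distance $W_1$. This forces it to upgrade the weak convergence of $S_{L,n}$ to $W_1$-convergence via uniform integrability, hence the fourth-moment bounds of Lemma~\ref{LemLocalPowers}(iii)--(iv).
\item You bound the tail only under ${\rm P}^{(n)}_0$, where its expectation is $\sum_{\ell>L}\eta^\psi_\ell\dim\mathcal H_\ell$ uniformly in $n$. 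You push the resulting probability bound through contiguity and finish with the standard approximation theorem for weak convergence: if $Y_{L,n}\to Y_L$ in law as $n\to\infty$, $Y_L\to Y$ in law as $L\to\infty$, and $\lim_L\limsup_n{\rm P}[|X_n-Y_{L,n}|>\varepsilon]=0$, then $X_n\to Y$ in law.
\end{itemize}
Your route dispenses with Lemma~\ref{LemLocalPowers} altogether and is shorter. The paper's route buys explicit moment control under the alternatives, which the statement itself does not need.

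Two steps should be written out.

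First, the transfer is not an instance of the sequential definition of contiguity. For fixed $L$, ${\rm P}^{(n)}_0[D^0_{n\ast}-S_{L,n}>\varepsilon]$ is small uniformly in $n$ but does not tend to zero. You need the uniform ($\varepsilon$--$\delta$) form of contiguity, which follows by a diagonal argument. If the conclusion failed along some $L_k\to\infty$, choose $n_k\uparrow\infty$ accordingly. Then apply contiguity to the events equal to $\{D^0_{n_k\ast}-S_{L_k,n_k}>\varepsilon\}$ at $n=n_k$ and to $\emptyset$ elsewhere; their ${\rm P}^{(n)}_0$-probabilities tend to zero, giving a contradiction.

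Second, your Markov step uses $D^0_{n\ast}-S_{L,n}\geq0$ and $\E_0[h^0_\ast(X_1,X_1)]=\sum_{\ell\geq2}\eta^\psi_\ell\dim\mathcal H_\ell$. Since $D^0_{n\ast}$ contains the diagonal terms $h^0_\ast(X_i,X_i)$, which an $L^2(\nu_d\otimes\nu_d)$ expansion does not see, both facts come from the factorized form of $h^0_\ast$. It is an $M$-integral of a product of the corrected integrands in~(\ref{Exprhzerostar}), so Bessel's inequality and the Hilbert--Schmidt norm in $L^2(M)$ give both facts. They do not follow from $L^2$ convergence of the expansion alone, contrary to your closing remark. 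The paper relies on the same facts implicitly.
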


Theorem~\ref{th:localpower} shows in particular that, under local alternatives whose limiting direction belongs to~$\mathcal H_1$, the asymptotic distribution of~$\Dstat_n$ is the same as under the null hypothesis, since only the projections~$\Pi_\ell\tau$ with $\ell\geq2$ enter the limiting distribution in~\eqref{AsymptDistrDnLocal}. Equivalently, if the $\tau_n$'s are such that $\tau(x)=v^Tx$ for some~$v\in\R^{d+1}$, then the proposed tests have asymptotic power equal to their nominal level under the corresponding local alternatives. This fact may seem surprising at first glance, but it actually makes perfect sense, as explained in the following result.

\begin{Prop}
	\label{prop:blind}
	Let~$(\tau_n)$ satisfy~\eqref{eq:taun1}--\eqref{eq:taun2} and assume that there exists~$v \in \R^{d+1}$ such that
	$\tau(x)=v^Tx$ for any~$x\in\mathbb{S}^d$.
	Let~${\rm Q}_{v}^{(n)}$ denote the sequence of hypotheses under which~$X_1,\ldots,X_n$ form a random sample from the distribution
	$$
	{\rm SC}\bigg(\frac{v}{2d\sqrt n}\bigg),
	$$
	for all sufficiently large~$n$. Then, (i) the local alternatives~${\rm P}^{(n)}_{\tau_n}$ are asymptotically equivalent to the spherical Cauchy hypotheses~${\rm Q}_{v}^{(n)}$, in the sense that, under both~${\rm P}^{(n)}_{\tau_n}$ and~${\rm Q}_{v}^{(n)}$,
	\begin{equation}
		\label{eq:PgnQv0equiv}
		\log \bigg( \frac{d{\rm P}^{(n)}_{\tau_n}}{d{\rm Q}^{(n)}_{v}} \bigg)
		=
		o_{\rm P}(1)
	\end{equation}
	as~$n\to\infty$.
	(ii) If~$T_n: \mathbb{S}^d \times \ldots \times \mathbb{S}^d\ (n \textrm{ times}) \to \mathbb{R}$ is a sequence of statistics that are invariant under M\"{o}bius transformations $($that is,
	$
	T_n (x_1, \ldots, x_n)
	=
	T_n \big( \mathcal{M}_{R, \varphi} (x_1), \ldots, \mathcal{M}_{R, \varphi} (x_n) \big)
	$
	for all~$n \in \N_0$, $x_1, \ldots x_n \in \mathbb{S}^d$, $\varphi \in \mathbb{B}^{d+1}$, and $R \in \mathrm{SO} (d + 1))$, then~$T_n$ has a limiting distribution under~${\rm P}^{(n)}_{\tau_n}$ if and only if it has one under~${\rm P}^{(n)}_{0}$, in which case both limiting distributions are the same.
\end{Prop}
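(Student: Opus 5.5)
For part~(i), I would expand the log-likelihood ratio of~${\rm Q}_v^{(n)}$ against~${\rm P}_0^{(n)}$ and compare it with the expansion of~$\Lambda_n$ in Proposition~\ref{prop:contiguity}. With $\phi_n:=v/(2d\sqrt n)$, the density~\eqref{eq:densCauchy} gives
$$
\log\frac{d{\rm Q}_v^{(n)}}{d{\rm P}_0^{(n)}}
=
\sum_{i=1}^n d\big\{\log(1-\|\phi_n\|^2)-\log(1-2\phi_n^TX_i+\|\phi_n\|^2)\big\},
$$
since $\|X_i-\phi_n\|^2=1-2\phi_n^TX_i+\|\phi_n\|^2$. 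A second-order Taylor expansion of $\log(1-t)$, uniform in~$X_i\in\mathbb S^d$ because~$\|\phi_n\|=O(n^{-1/2})$, yields
$$
\log\frac{d{\rm Q}_v^{(n)}}{d{\rm P}_0^{(n)}}
=
\frac{1}{\sqrt n}\sum_{i=1}^n v^TX_i
-2d\|\phi_n\|^2 n
+\frac{2d}{n}\cdot\frac{n}{4d^2}\cdot\frac1n\sum_{i=1}^n (v^TX_i)^2\, n
+o_{\rm P}(1).
$$
Working the constants out with the LLN, $\frac1n\sum(v^TX_i)^2\to\|v\|^2/(d+1)$, the deterministic terms combine to $-\|v\|^2/\{2(d+1)\}=-\frac12\E_0[(v^TX_1)^2]$. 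This matches Proposition~\ref{prop:contiguity} with~$\tau(x)=v^Tx$. The bookkeeping is: $n\|\phi_n\|^2=\|v\|^2/(4d^2)$, so the $-d\|\phi_n\|^2$ terms contribute $-\|v\|^2/(2d)$, the quadratic term contributes $+2\|v\|^2/\{d(d+1)\}\cdot\frac{d}{4d}\cdots$, and so on. I expect this constant check to be the only delicate computation. If it fails, the factor~$2d$ in~$\phi_n$ would have to be revisited, but the linear term already forces it.

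Subtracting the two expansions then shows that $\log(d{\rm P}^{(n)}_{\tau_n}/d{\rm Q}^{(n)}_v)=o_{\rm P}(1)$ under~${\rm P}_0^{(n)}$. Both sequences are contiguous to~${\rm P}_0^{(n)}$ by Le Cam's first lemma, because each log-ratio is asymptotically $\mathcal N(-\sigma^2/2,\sigma^2)$. Hence the $o_{\rm P}(1)$ statement transfers to both~${\rm P}^{(n)}_{\tau_n}$ and~${\rm Q}_v^{(n)}$, which proves~\eqref{eq:PgnQv0equiv}.

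For part~(ii), let~$T_n$ be Möbius-invariant. By~\eqref{eq:CauchyUnif}, if $X_i\sim{\rm SC}(\phi_n)$ then $\mathcal M_{I,-\phi_n}(X_i)$ are i.i.d. uniform. Invariance then gives $T_n(X_1,\ldots,X_n)=T_n(\mathcal M_{I,-\phi_n}(X_1),\ldots)$, so the law of~$T_n$ under~${\rm Q}_v^{(n)}$ equals its law under~${\rm P}_0^{(n)}$ exactly, for every~$n$. Part~(i), together with mutual contiguity, implies that $d_{TV}({\rm P}^{(n)}_{\tau_n},{\rm Q}_v^{(n)})\to0$. Indeed, the likelihood ratio tends to~$1$ in probability and is uniformly integrable by contiguity (Le Cam's lemma). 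Consequently $|{\rm P}^{(n)}_{\tau_n}[T_n\in B]-{\rm P}^{(n)}_0[T_n\in B]|\to0$ uniformly in Borel~$B$, which gives the claimed equivalence of limiting distributions.
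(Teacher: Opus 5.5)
Your proof is correct, but it takes a different route from the paper in both parts.

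\textbf{Part (i).} The paper never expands the spherical Cauchy log-likelihood directly. It writes ${\rm Q}_v^{(n)}={\rm P}^{(n)}_{\tau_n^\ast}$ with $\tau_n^\ast(x):=\sqrt n\{((1-\|\phi_n\|^2)/\|x-\phi_n\|^2)^d-1\}$. It then observes that $(\tau_n^\ast)$ satisfies \eqref{eq:taun1} automatically, because the density integrates to one, and satisfies \eqref{eq:taun2} with the same limit $v^Tx$. Applying Proposition~\ref{prop:contiguity} to both $(\tau_n)$ and $(\tau_n^\ast)$ gives identical expansions, so no constant has to be checked.

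Your direct Taylor expansion also works, but your display carries a stray factor $n$. The quadratic contribution is $\frac{1}{2d}\cdot\frac1n\sum_{i}(v^TX_i)^2\to\|v\|^2/\{2d(d+1)\}$. Combined with $-\|v\|^2/(2d)$, this indeed gives $-\|v\|^2/\{2(d+1)\}=-\frac12{\rm E}_0[\tau^2(X_1)]$. That check could not have failed: the normalization of the density forces the constant, which is exactly what the paper's reduction exploits.

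\textbf{Part (ii).} The paper uses Le Cam's third lemma: a log-likelihood ratio that is $o_{\rm P}(1)$ under ${\rm Q}_v^{(n)}$ leaves limiting laws unchanged. It combines this with the exact equality of the laws of $T_n$ under ${\rm Q}_v^{(n)}$ and ${\rm P}_0^{(n)}$.

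Your total-variation argument is more elementary and yields more. Since ${\rm Q}_v^{(n)}$ has an everywhere positive density, $L_n:=d{\rm P}^{(n)}_{\tau_n}/d{\rm Q}^{(n)}_v$ satisfies ${\rm E}_{{\rm Q}_v^{(n)}}[L_n]=1$ and $L_n\to1$ in ${\rm Q}_v^{(n)}$-probability. Hence ${\rm E}_{{\rm Q}_v^{(n)}}|L_n-1|=2\,{\rm E}_{{\rm Q}_v^{(n)}}[(1-L_n)^+]\to0$ by bounded convergence, so your appeal to contiguity and uniform integrability is not even needed. This gives $\sup_B|{\rm P}^{(n)}_{\tau_n}[T_n\in B]-{\rm P}^{(n)}_0[T_n\in B]|\to0$. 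That is stronger than equality of limiting distributions, and it holds whether or not $T_n$ converges at all.
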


Proposition~\ref{prop:blind} also clarifies which local directions are relevant for a M\"{o}bius-invariant goodness-of-fit test. The local tangent space to the spherical Cauchy family at the uniform distribution is
$
\mathcal H_1=\{x\mapsto v^Tx:v\in\R^{d+1}\}
$.
Therefore, M\"{o}bius-invariant statistics have the same limiting behaviour under local alternatives whose direction belongs to~$\mathcal H_1$ as under the null hypothesis. The relevant local alternatives are thus those with a non-zero component in the orthogonal complement to~$\mathcal H_1$. Against such alternatives, Theorem~\ref{th:localpower} shows that the proposed test has non-trivial local power whenever
$$
\sum_{\ell=2}^{\infty}
\eta_\ell^\psi
\|\Pi_\ell\tau\|_{L^2(\nu_d)}^2
>
0
.
$$
In this sense, the test is locally sensitive to precisely those spherical-harmonic components of the local alternative that are not tangent to the spherical Cauchy null model.

We illustrate the local power results above on two classical rotationally symmetric families. (a) Consider first Fisher--von Mises local alternatives with location~$\theta\in\mathbb{S}^d$ and concentration~$\kappa_n=\rho/\sqrt n$, whose density with respect to~$\nu_d$ is proportional to
$$
x\mapsto \exp\bigg(\frac{\rho}{\sqrt n}\theta^Tx\bigg).
$$
The corresponding sequence of functions~$\tau_n$ satisfies 
$
\tau_n(x)\to \tau(x)=\rho\,\theta^Tx
$
uniformly in~$x\in\mathbb{S}^d$. Since this limiting direction belongs to~$\mathcal H_1$, all projections~$\Pi_\ell\tau$, $\ell\geq2$, vanish. Therefore, Theorem~\ref{th:localpower} entails that the asymptotic distribution of~$D_n$ is the same under these Fisher--von Mises local alternatives as under the null hypothesis. This is compatible with Proposition~\ref{prop:blind}. 
(b) Consider then Watson local alternatives with location~$\theta\in\mathbb{S}^d$ and concentration~$\kappa_n=\rho/\sqrt n$, whose density with respect to~$\nu_d$ is proportional to
$$
x\mapsto \exp\bigg\{\frac{\rho}{\sqrt n}(\theta^Tx)^2\bigg\}
.
$$
One can easily check that the corresponding sequence of functions~$\tau_n$ is such that
$$
\tau_n(x)\to \tau(x)
=
\rho\bigg\{(\theta^Tx)^2-\frac{1}{d+1}\bigg\}
$$
uniformly in~$x\in\mathbb{S}^d$. The limiting direction~$\tau$ belongs to~$\mathcal H_2$, so that~$\Pi_\ell\tau=\delta_{\ell 2}\tau$, where~$\delta_{\ell\ell'}$ is the usual Kronecker symbol. Since~$\dim(\mathcal H_2)=d(d+3)/2$ and since
$$
\|\Pi_2\tau\|_{L^2(\nu_d)}^2
=
\|\tau\|_{L^2(\nu_d)}^2
=
\rho^2\E_0\bigg[
\bigg\{(\theta^TX_1)^2-\frac{1}{d+1}\bigg\}^2
\bigg]
=
\frac{2d\rho^2}{(d+1)^2(d+3)}
$$
(the last equality results from, \emph{e.g.}, Lemma~A.1(iii) in \citealp{PaindaveineVerdebout2016}), Theorem~\ref{th:localpower} yields, under these Watson local alternatives,
$$
\Dstat_n
\stackrel{\mathcal L}{\to}
\eta_2^\psi
\chi^2_{\frac{d(d+3)}{2}}
\bigg(
\frac{2d\rho^2}{(d+1)^2(d+3)}
\bigg)
+
\sum_{\ell=3}^\infty
\eta_\ell^\psi
\chi^2_{\dim(\mathcal H_\ell)}
,
$$
where the chi-square variables are mutually independent.

We also conducted a Monte Carlo study to compare the distributions predicted by our local-alternative asymptotic theory with simulated distributions at a large sample size. For each dimension~$d\in\{1,2,3\}$, we generated $M=10,\!000$ mutually independent random samples of size~$n=5,\!000$ from the two sequences of local alternatives above, namely Fisher--von Mises alternatives and Watson alternatives, both with location~$\theta=(1,0,\ldots,0)^T$ and concentration~$\kappa=\rho/\sqrt n$, where $\rho=20$. In each sample, we evaluated the Cram\'{e}r--von Mises version ($W(x)=x$) of the proposed test statistic~$\Dstat_n$. For each dimension~$d$ and for each of the two types of alternatives, Figure~\ref{Fig2} reports a histogram of the resulting~$M$ values. The figure also shows estimated densities of the corresponding non-null asymptotic distributions (these are still kernel density estimates using the default kernel and bandwidth choices of the \textsf{R} function \texttt{density}, obtained from $10,\!000$ random variables generated from the corresponding asymptotic non-null distribution). The close agreement between the histograms and the estimated densities provides strong numerical support for our local-alternative asymptotic results, although convergence appears to be slower for Fisher--von Mises alternatives in dimension~$d=1$.

\begin{figure}[h!]
	\begin{center}
		\includegraphics[width=\textwidth]{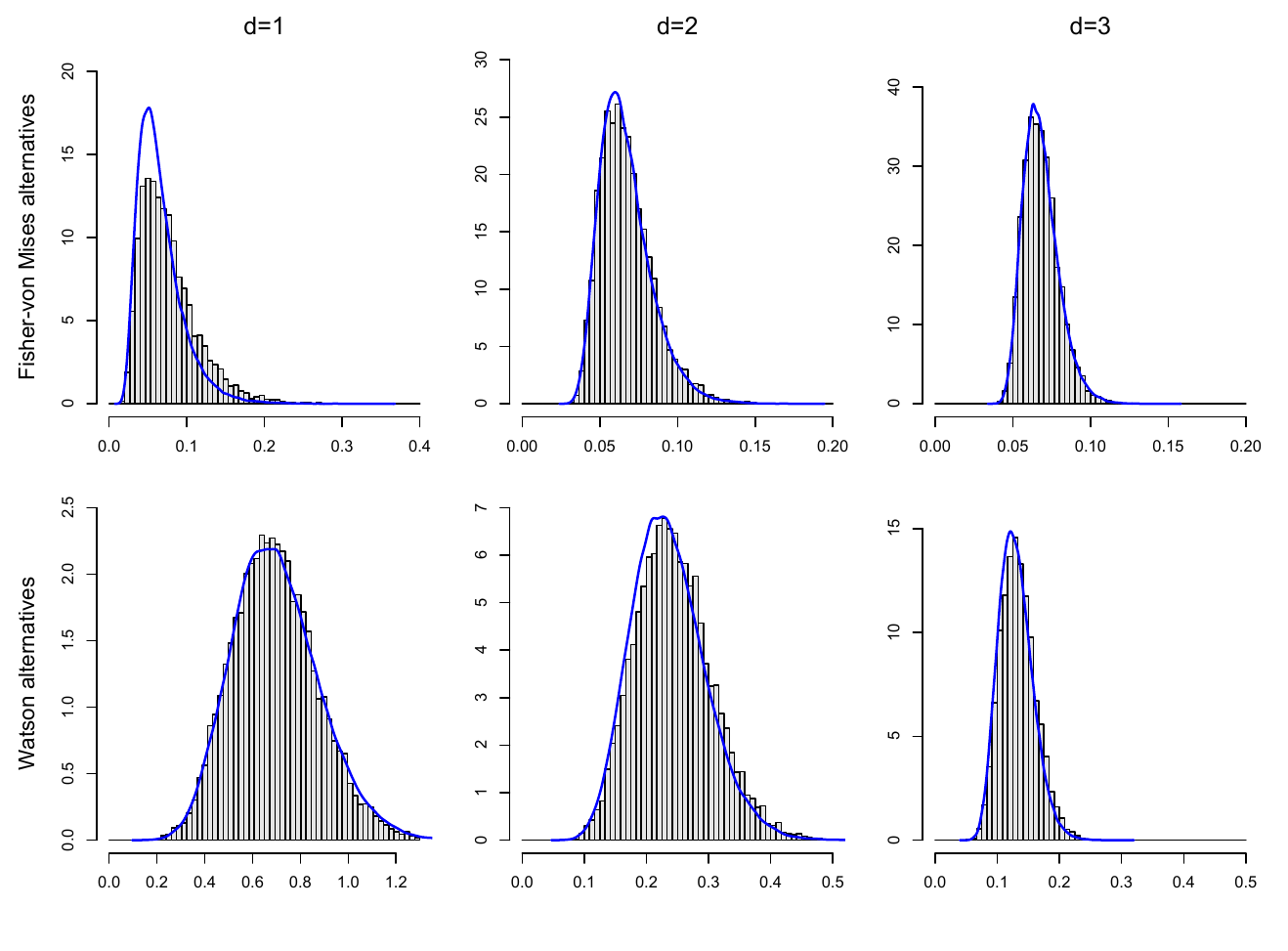}
		\caption{Monte Carlo local-alternative distributions of the Cram\'{e}r--von Mises version of~$\Dstat_n$ for $d=1,2,3$ under Fisher--von Mises alternatives (1st row) and Watson alternatives (2nd row). 
			The histograms are based on $M=10,\!000$ independent samples of size $n=5,\!000$ generated under local alternatives with location~$\theta=(1,0,\ldots,0)^T$ and concentration~$\kappa=\rho/\sqrt n$, where $\rho=20$. 
			The solid curves are kernel density estimates based on $10,\!000$ independent draws from the corresponding asymptotic non-null distributions.}
		\label{Fig2}
	\end{center}
\end{figure}

While the local non-null results of this section focused on local alternatives to uniformity, they immediately extend to local alternatives to any fixed~${\rm SC}(\phi)$. Fix~$\phi\in\mathbb B^{d+1}$ and consider alternatives having density $x\mapsto 1+r_n(x)/\sqrt n$ with respect to~${\rm P}_\phi={\rm SC}(\phi)$, where
$\int_{\mathbb S^d} r_n(x)\,d{\rm P}_\phi(x)=0$. Equivalently, with respect to the surface area measure~$\sigma_d$, the density is
$x\mapsto f(x;\phi)\{1+r_n(x)/\sqrt n\}$. Let $\tau_n(u):=r_n(\mathcal M_{I,\phi}(u))$, $u\in\mathbb S^d$, and assume that~$(\tau_n)$ satisfies~\eqref{eq:taun1}--\eqref{eq:taun2}, with limit~$\tau$. If $U_i=\mathcal M_{I,-\phi}(X_i)$, then the~$U_i$'s have density $1+\tau_n/\sqrt n$ with respect to~$\nu_d$. By M\"{o}bius invariance of~$\Dstat_n$, Theorem~\ref{th:localpower} therefore yields that, under these local alternatives,
$$
\Dstat_n
\stackrel{\mathcal L}{\to}
\sum_{\ell=2}^{\infty}
\eta_\ell^\psi Z_\ell,
$$
where~$Z_2,Z_3,\ldots$ are mutually independent with $Z_\ell\sim\chi^2_{\dim(\mathcal H_\ell)}(\|\Pi_\ell\tau\|_{L^2(\nu_d)}^2)
$. Thus, local alternatives to~${\rm SC}(\phi)$ are simply handled by pulling them back to uniformity through~$\mathcal M_{I,-\phi}$; the relevant local direction is the limit~$\tau$ of the pulled-back perturbations~$\tau_n=r_n\circ\mathcal M_{I,\phi}$.

\section{Simulation study}
\label{secsimus}

In this section, we perform several Monte Carlo exercises in order to investigate the finite-sample behavior of our proposal. For the sake of clarity, we focus entirely on the goodness-of-fit test based on the Cramér--von Mises measure, that is, on the test based on the statistic
\begin{equation*}
	\Dstat_n
	:=
	\Dstat_n(X_1,\ldots,X_n)
	:=
	\frac{1}{n}
	\sum_{i,j=1}^n
	\psi\big(\mathcal{M}_{I, -\hat{\phi}_n} (X_i),\mathcal{M}_{I, -\hat{\phi}_n} (X_j)\big)
	,
\end{equation*}
with $\hat{\phi}_n$ given by \eqref{eq:sampleMobius} and
\begin{equation*}
	\psi(x,y)
	=
	\int_{\mathbb{S}^d}
	\int_{-1}^1
	\{\mathbbm{1}[ u^T x \leq s ]-F_d(s)\}
	\{\mathbbm{1}[ u^T y \leq s ]-F_d(s)\}
	\,
	d F_d(s)
	d\nu_d(u)
	.
\end{equation*}
Note that Proposition~2.4 in \cite{GarciaPortugues2023} provides a closed-form expression for $\psi$ when $d \leq 3$. 

Section~\ref{sec:critvalues} explains the procedure that we followed to estimate the critical values of the test. Section~\ref{sec:emppower} then investigates the empirical power of our test under different alternatives, while Section~\ref{sec:comparison} compares its performance with a competitor. Throughout, we made extensive use of the R package \texttt{sphunif} \citep{GarciaPortugues2025}.

\subsection{Estimates of the critical values}
\label{sec:critvalues}

One of the main advantages of our proposal over its competitors is exact distribution-freeness: since the null distribution of $D_n$ does not depend on the true value of the parameter $\phi$, we do not need to rely on a resampling scheme for calibration. Instead, given a data sample $X_1, \ldots, X_n$ and a significance level $\alpha \in (0,1)$, we simply reject the null hypothesis $H_0$ in \eqref{eq:GoFCauchy} if $D_n > c_{n, \alpha}$, where the critical value $c_{n, \alpha} > 0$ is such that
$$
c_{n,\alpha}
:=
\inf\big\{c\in\mathbb R:
{\rm P}^{(n)}_0[D_n\leq c]\geq1-\alpha\big\};
$$
recall that~${\rm P}^{(n)}_0$ denotes the hypothesis under which~$X_1,\ldots,X_n$ form a random sample from the uniform distribution on~$\mathbb{S}^d$. 

In practice, the distribution of $D_n$ under $H_0$ is not tractable, so we estimated the critical values $c_{n, \alpha}$ via Monte Carlo. For a given sample size $n$, we simulated $M = 100{,}000$ mutually independent samples of size $n$ from the uniform distribution on~$\mathbb{S}^d$, and we computed $D_n$ for each sample. This provided $M$ replicates of the test statistic, say $D_n^1, \ldots, D_n^M$. From these replicates, we approximated the critical values $c_{n, \alpha}$ by setting
$$ 
\hat{c}^{(M)}_{n, \alpha} 
:=
\inf\left\{
c\in\mathbb R:
\frac1M\sum_{m=1}^M\mathbbm{1}[D_n^m\leq c]
\geq1-\alpha
\right\}.
$$
We did this for sample sizes $n = 25, 50, 100$ and $200$ and dimensions $d = 1, 2$ and $3$. For reproducibility purposes, the resulting estimated critical values $\hat{c}_{n, \alpha}^{(M)}$ are given in Table~\ref{TabCritValDiego}.

\begin{table}
	\caption{Estimates of the critical values of the goodness-of-fit test based on the Cramér--von Mises measure. Each estimate is based on $M=100{,}000$ Monte Carlo replicates of
		the corresponding test statistic obtained under the uniform distribution on~$\mathbb{S}^d$.}
	\begin{center}
		\begin{tabular}{cccccc}
			\toprule
			& & $n = 25$ & $n = 50$ & $n = 100$ & $n = 200$ \\
			\midrule
			& 10\% & 0.102600 & 0.102674 & 0.102562 & 0.103050\\
			$ d = 1$ & 5\% & 0.119389 & 0.120133 & 0.120195 & 0.120273\\
			& 1\% & 0.158149 & 0.159509 & 0.161810 & 0.160832\\
			\midrule
			& 10\% & 0.088381 & 0.088525 & 0.088776 & 0.088756\\
			$ d = 2 $ & 5\% & 0.097109 & 0.097455 & 0.097885 & 0.097647\\
			& 1\% & 0.115928 & 0.116518 & 0.117123 & 0.117395\\
			\midrule
			& 10\% & 0.083190 & 0.083322 & 0.083347 & 0.083261\\
			$d = 3$ & 5\% & 0.088758 & 0.089129 & 0.088999 & 0.088922\\
			& 1\% & 0.101000 & 0.101376 & 0.100733 & 0.100741\\
			\bottomrule
		\end{tabular}
		\label{TabCritValDiego}
	\end{center}
\end{table}

\subsection{Power against fixed alternatives}
\label{sec:emppower}

We studied the finite-sample power of our test under two types of alternatives:
\begin{itemize}
	\item Fisher--von Mises distributions with mean direction $\theta = (1, 0, \ldots, 0)^T$ and concentration~$\kappa$, with density proportional to $x \mapsto \exp(\kappa \theta^T x)$;
	\vspace{2mm}
	\item Watson distributions with directional parameter $\theta = (1, 0, \ldots, 0)^T$ and concentration~$\kappa$, whose density  is proportional to $x \mapsto \exp(\kappa (\theta^T x)^2)$.
\end{itemize}
These alternatives are rotationally symmetric about $\theta$. In both cases, $\kappa = 0$ yields the uniform distribution on $\mathbb{S}^d$, which belongs to the null hypothesis, whereas larger values of~$\kappa$  provide increasingly severe alternatives. However, the Fisher--von Mises distribution is unimodal with mode $\theta$, while the Watson distribution is bimodal with modes at $\pm\theta$. Therefore, the Fisher--von Mises alternatives are intuitively closer to the spherical Cauchy family than the Watson alternatives. 

For each model, we generated $M = 1{,}000$ mutually independent samples for any combination of $d \in \{1, 2, 3\}$, $n \in \{25, 50, 100, 200\}$, and $\kappa \in \{k r \colon k = 0, 1, \ldots, 6 \}$, where~$r=1$ for Fisher--von Mises alternatives and $r=0.25$ for Watson alternatives. Then, we applied our test with significance level $\alpha$ to each sample.

The resulting rejection frequencies for $\alpha = 5\%$ are plotted in Figure~\ref{FigPower}.
Our proposal works as expected. Power increases both with the concentration parameter and with the sample size. Also, increasing the dimension results in a loss of power, since the dimension of the parameter space of the spherical Cauchy model increases with $d$ and, therefore, the testing problem becomes harder. Note that the empirical powers of our test are higher for the Watson alternatives than for the Fisher--von Mises ones. For instance, with $d = 2$, the test achieves an empirical power of approximately 1 with $\kappa = 1.5$ for the Watson alternatives, whereas for the Fisher--von Mises alternatives one has to increase $\kappa$ to $5$ to reach similar power levels. This is in line with the fact that, as has been discussed above, Fisher--von Mises alternatives may be considered closer to the spherical Cauchy family than Watson alternatives.

\begin{figure}[t!]
	\begin{center}
		\includegraphics[width=\textwidth]{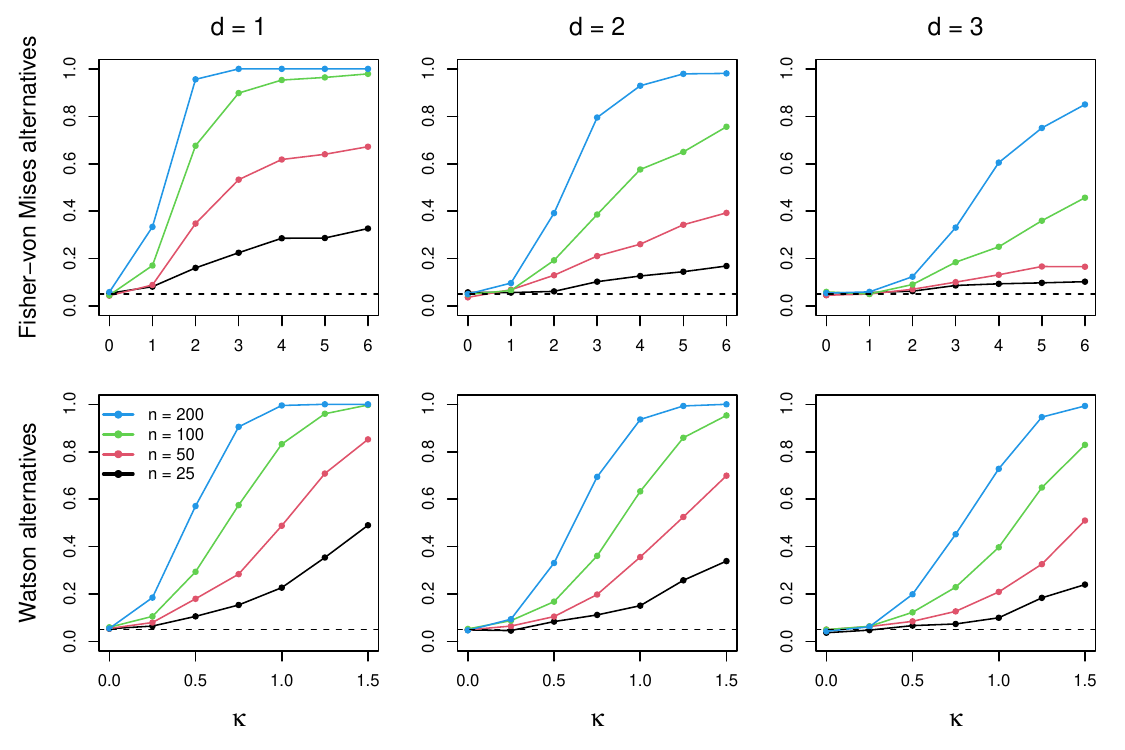}
		\caption{Rejection frequencies of our test at level~$\alpha=5\%$ under Fisher--von Mises alternatives (first row) and Watson alternatives (second row) for different values of the concentration parameter $\kappa$. In each case, the test is based on the critical values in Table~\ref{TabCritValDiego}. Rejection frequencies are obtained from~$M = 1{,}000$ Monte Carlo replications. The significance level~$\alpha$ is plotted for reference (horizontal dashed line).}
		\label{FigPower}
	\end{center}
\end{figure}

\subsection{Comparison with other goodness-of-fit tests}
\label{sec:comparison}

We now compare our test with the test proposed by \cite{Ebner2024}. In a nutshell, their method measures a weighted $L^2$-distance between the characteristic function of the spherical Cauchy distribution~$\mathrm{SC}(\phi)$ and the empirical characteristic function based on the sample~$X_1,\ldots,X_n$. Nevertheless, since the characteristic function of the spherical Cauchy
distribution has no closed form, as is also the case for many other parametric families on the hypersphere, they propose to estimate it with
$$
\hat{\Phi}_m (t) = \frac{1}{m} \sum_{j = 1}^m \exp (\mathrm{i} t^T Y_j), \quad t \in \mathbb{R}^{d+1},
$$
where $Y_1, \ldots, Y_m$ is an i.i.d.~sample from $\mathrm{SC} (\hat{\phi}_n)$ and $\hat{\phi}_n$ is the maximum likelihood estimator of $\phi$ obtained from the sample~$X_1,\ldots,X_n$ at hand. The test statistic proposed by \cite{Ebner2024} is then of the form
\begin{multline*}
	T_{n, m, \gamma}^{\{2\}}
	=
	\frac{1}{n + m}
	\bigg(
	\frac{m}{n} \sum_{j, k = 1}^n \exp \big( - \gamma \| X_j - X_k \|^2 \big) \\
	-
	2 \sum_{j = 1}^n \sum_{k = 1}^m \exp \big( - \gamma \| X_j - Y_k \|^2 \big) +
	\frac{n}{m} \sum_{j, k = 1}^m \exp \big( - \gamma \| Y_j - Y_k \|^2 \big) \bigg)
	,
\end{multline*}
where $\gamma > 0$ is a tuning parameter that has to be set by the practitioner (see equation~(22) of that paper). Since the null distribution of $T_{n, m, \gamma}^{\{2\}}$ depends on the true value of~$\phi$, the test is calibrated using a resampling algorithm that is described in Section~5 of \cite{Ebner2024}. Hereafter, we refer to the resulting test as the EHM test.

We applied the EHM test under the models  considered in Section~\ref{sec:emppower}. We did so for $\gamma=0.5, 1$ and $2$, with~$m=200$ in each case. For each sample, the p-value was estimated from $B=500$ resamples generated using the algorithm mentioned above.
Figure~\ref{FigComparison} plots the resulting rejection frequencies for $n = 200$ and $\alpha = 5\%$, 
along with the corresponding rejection frequencies of our test. Clearly, our test is consistently more powerful than the EHM test, regardless of the choice of~$\gamma$. This difference is even more remarkable for Fisher--von Mises alternatives: in that case, for $d = 2$, the rejection frequencies of the EHM tests are less than half those achieved by our test, and, for $d = 3$, the EHM tests exhibit almost no power for the considered values of~$\kappa$. The EHM tests perform better for Watson alternatives, yet they are still less powerful than our test in all cases.

\begin{figure}[t!]
	\begin{center}
		\includegraphics[width=\textwidth]{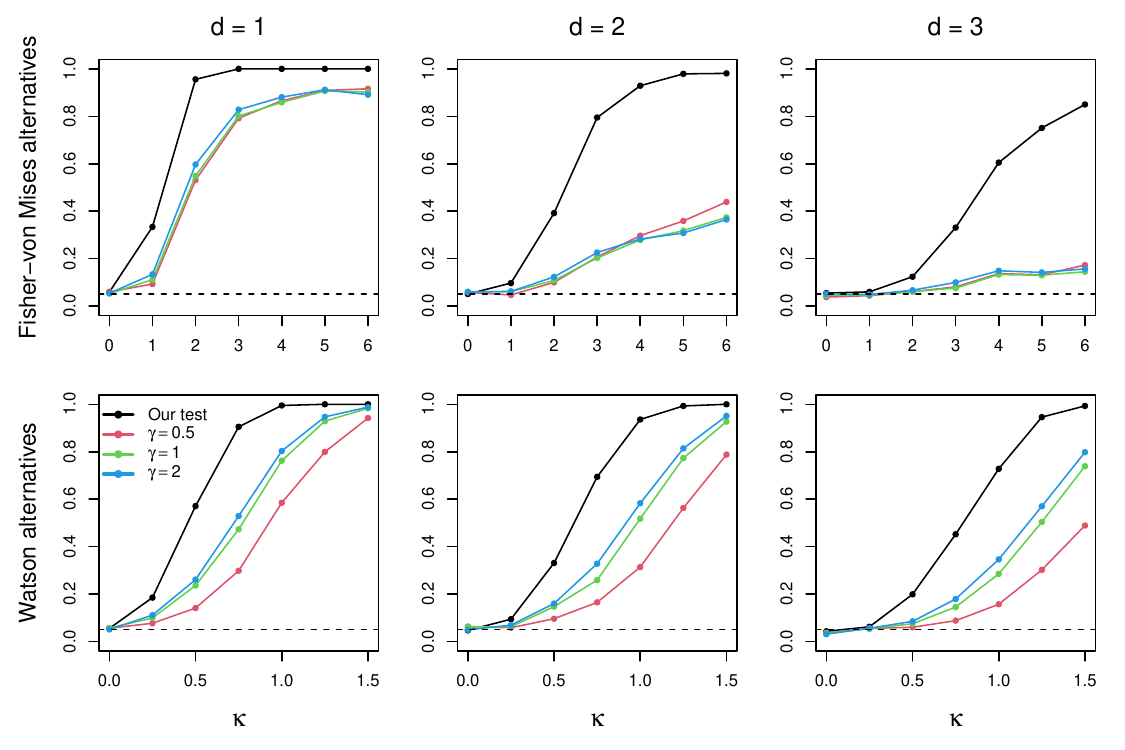}
		\caption{Rejection frequencies of several goodness-of-fit tests at level~$\alpha=5\%$ under Fisher--von Mises alternatives (first row) and Watson alternatives (second row) for different values of the concentration parameter~$\kappa$. The considered tests are our Cram\'{e}r--von Mises test and the EHM tests associated with $\gamma = 0.5, 1$ and $2$, and $m = 200$. The sample size is $n = 200$. Rejection frequencies are obtained from $M = 1{,}000$ Monte Carlo replications. Our test is based on the critical values in Table~\ref{TabCritValDiego}, whereas the EHM tests are implemented from a resampling algorithm using $B = 500$ resamples. The significance level $\alpha$ is plotted for reference (horizontal dashed line).}
		\label{FigComparison}
	\end{center}
\end{figure}

Finally, we would like to comment on another point: since our test does not require resampling for calibration, it is much faster to compute than its competitors. To put things in numbers, computing all rejection frequencies for our tests reported in Figure~\ref{FigPower} took about $7.5$ minutes, whereas the corresponding computations for the EHM tests required approximately $53$ hours. This means that our test is more than $400$ times faster to compute than the EHM test. Although a more extensive comparison of computation times across sample sizes would be useful, we think that this illustrates clearly the advantage of distribution-free tests in terms of computation time.

\section{Real data analysis}
\label{sec:realdata}

In this section, we present a simple but illustrative real data example. We analyze the data set of paleomagnetic pole positions provided by \cite{Schmidt1976}.
This data set contains 33 estimates of the Earth's former magnetic-pole position, each corresponding to a different sampling site on the island of Tasmania (Australia). Figure~\ref{FigTasmania} shows the 33 observations, which are concentrated in the Southern hemisphere.

\begin{figure}[t!]
	\begin{center}
		\includegraphics[width=0.3\textwidth]{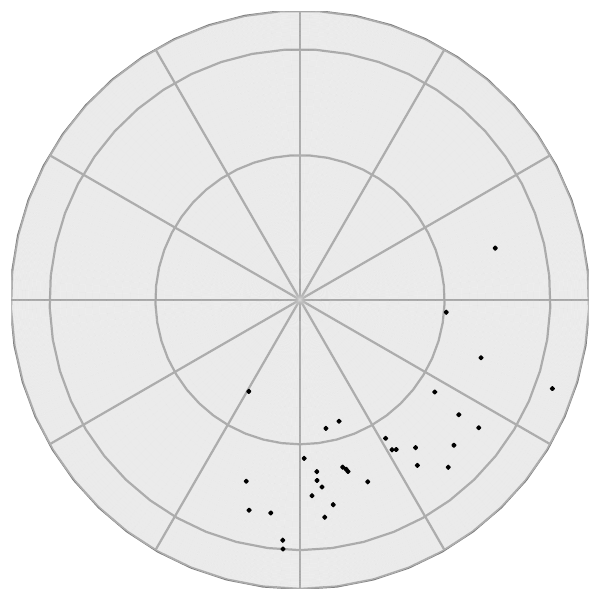}
		\caption{Paleomagnetic pole positions reported by \cite{Schmidt1976}. The 33 observations are represented on the sphere using an orthogonal projection centered at the South pole.}
		\label{FigTasmania}
	\end{center}
\end{figure}

This data set is a classical example in directional statistics and has been widely used to illustrate several data analysis techniques. For instance, \cite{Tsagris2025b} introduce two new parametric families for spherical data and they use these data to demonstrate the advantages of their models over the spherical Cauchy family. They claim that their models better capture the structure of this data set, since they allow for a non-rotationally symmetric structure, whereas the spherical Cauchy distributions are always rotationally symmetric around the mode. Our test can provide some theoretical support for this claim: rejection of $H_0$ would indicate that the spherical Cauchy model is not suitable for these data and that a more flexible model is needed.

For this data set, our Cramér--von Mises goodness-of-fit statistic~$D_{33}$ took the value~$0.108$ (rounded to three decimal places). We generated $M = 10{,}000$ Monte Carlo values $D_{33}^1,\ldots,D_{33}^M$ under $H_0$, which provided
$$ 
\frac{1}{M} \sum_{m = 1}^M \mathbbm{1}[ D_{33}^m > 0.108 ] 
\approx
0.021
$$
as an estimated p-value for our test. Therefore, our test rejects the null hypothesis at level~$5\%$, which supports the claims by \cite{Tsagris2025b}.

We also applied the EHM tests to the data for comparison. We simulated a sample $Y_1, \ldots, Y_{200}$ from the distribution $\mathrm{SC} (\hat{\phi}_n)$ and evaluated the resulting EHM test statistics for~$\gamma=0.5, 1$ and~$2$. This provided
$$
T_{33, 200, 0.5}^{\{2\}} \approx 0.249, 
\quad
T_{33, 200, 1}^{\{2\}} \approx 0.519,
\quad
\textrm{ and }
\quad
T_{33, 200, 2}^{\{2\}} \approx  0.831
,
$$
again rounded to three decimal places. Based on $M = 10{,}000$ bootstrap resamples, the corresponding estimated $p$-values are~$0.305$, $0.205$, and $0.139$, respectively. Consequently, none of them reject the null hypothesis at level~$5\%$. This illustrates that the extra empirical power of our test can be consequential at small sample sizes.

We also use this data set to explore another drawback of the EHM proposal: its dependence on the artificial sample~$Y_1,\ldots,Y_m$. Due to this artificial sample, the EHM test statistic is random \textit{given the data sample}, so that the outcome of the test is random, too: for the same data, the test may or may not reject~$H_0$ depending on the artificial sample.
To illustrate this, we performed the following Monte Carlo experiment. We simulated $A = 1{,}000$ independent artificial samples, still of size $m = 200$. For each artificial sample, we evaluated the EHM test statistics and approximated the corresponding $p$-values using $M = 10{,}000$ bootstrap samples. This provided~$A = 1{,}000$ different $p$-values for each~$\gamma$. Figure~\ref{FigDependenceOnSample} shows a boxplot of these $p$-values for each~$\gamma$. Clearly, the $p$-values vary dramatically, meaning that the artificial sample has a substantial impact on the outcome of the tests. For $\gamma = 0.5, 1$ and~$2$, the EHM test rejects the null hypothesis at level $\alpha = 5\%$ for about $28\%$, $46\%$, and $60\%$ of the artificial samples, respectively.
Of course, one could mitigate the effect of the artificial sample by increasing~$m$, but \cite{Ebner2024} does not provide any heuristic to select~$m$ in practice. Moreover, increasing~$m$ would also drastically increase the computation time due to the involved resampling procedure.
Against this background, our proposal provides a fast and reliable way of testing goodness-of-fit to the spherical Cauchy family.

\begin{figure}[t!]
	\begin{center}
		\includegraphics[width=0.5\textwidth]{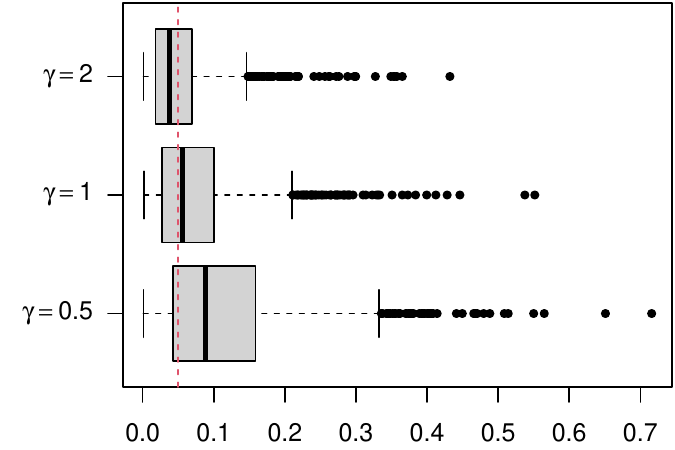}
		\caption{Boxplots of the $p$-values of the EHM tests applied to the paleomagnetic pole positions reported by \cite{Schmidt1976}. For each value of~$\gamma$, the $p$-values were computed using $A=1{,}000$ independently generated artificial samples. The significance level~$\alpha=5\%$ is shown for reference (vertical dashed line).}
		\label{FigDependenceOnSample}
	\end{center}
\end{figure}

\section{Wrap up and perspectives for future research}
\label{secwrapup}

We introduced a class of goodness-of-fit tests for the spherical Cauchy model by combining the sample M\"{o}bius mean with a rotation-invariant projection-based test of spherical uniformity. The resulting statistics are exactly distribution-free under the composite null hypothesis, so critical values can be generated from uniform samples without parameter-dependent resampling. Along the way, we developed the population and sample M\"{o}bius means, established their main structural and asymptotic properties, derived the asymptotic null distribution of the test statistics, and studied their behavior under fixed and contiguous alternatives. The analysis shows that estimating the spherical Cauchy parameter removes precisely the degree-one spherical-harmonic component of the underlying uniformity statistic, thereby linking the effect of parameter estimation to the tangent space of the null model. The Monte Carlo results support the finite-sample relevance of the asymptotic theory and indicate favorable power and computational performance relative to a general-purpose competitor. Lastly, we illustrated the practical relevance of our proposal with a real data example.

Several directions for future research emerge from this work. The same construction could be combined with other rotation-invariant tests of spherical uniformity, possibly leading to procedures with improved sensitivity against specific classes of alternatives. It would also be useful to investigate data-driven choices of the weighting measure~$W$ and questions of local optimality. The simulation experiments performed in Section~\ref{secsimus} show that our proposal suffers from the classical curse of dimensionality and loses power as $d$ increases. It would be interesting to theoretically explore this phenomenon and study the limiting behavior of the test statistic when~$d$ diverges to infinity following the path recently paved by \cite{Ebner2025} for uniformity tests. %

For the particular case of $d = 1$, the spherical Cauchy family and the Möbius transformations serve as building blocks for more complex techniques like circular-circular regression \citep{Kato2008} and time-series circular models \citep{Kato2010}. Therefore, the transformation technique presented here could be studied for testing goodness-of-fit to these extensions of the spherical Cauchy model in which the parameter varies with covariates or dependence is present. More broadly, a similar approach could be adopted to build goodness-of-fit procedures for other transformation models on directional or non-Euclidean sample spaces.

\appendix

\section{Proofs for Section~\ref{secinvariance}}
\label{Appsecinvariance}

\begin{proof}[Proof of Theorem~\ref{th:invariance}]
	Fix $x_1, \ldots, x_n \in \mathbb{S}^d$, $\varphi \in \mathbb{B}^{d+1}$, and $R \in \mathrm{SO} (d + 1)$. Take $z_i = \mathcal{M}_{R, \varphi} (x_i)$ for $i=1, \ldots, n$, and let $\hat{\phi}_n^{\ast} = \hat{\phi}_n (z_1, \ldots, z_n)$. First note that 
	equivariance of~$\hat{\phi}_n$, namely
	\begin{equation*}
		\hat{\phi}_n^{\ast}
		=
		\hat{\phi}_n \big( \mathcal{M}_{R, \varphi} (x_1), \ldots, \mathcal{M}_{R, \varphi} (x_n) \big)
		=
		\mathcal{M}_{R, \varphi} \big( \hat{\phi}_n (x_1, \ldots, x_n) \big),
	\end{equation*}
	yields
	$$
	\mathcal{M}_{I, -\varphi} 
	(        R^T \hat{\phi}_n^{\ast})
	=
	\mathcal{M}_{I, -\varphi} 
	\big\{
	\mathcal{M}_{I, \varphi}( \hat{\phi}_n (x_1, \ldots, x_n))
	\}
	=
	\hat{\phi}_n (x_1, \ldots, x_n),
	$$
	hence also
	\begin{equation}
		\label{aa}
		\mathcal{M}_{I, \varphi} 
		(       -R^T \hat{\phi}_n^{\ast})
		=
		- \hat{\phi}_n (x_1, \ldots, x_n).
	\end{equation}
	We then consider three cases.
	\vspace{2mm}
	
	(i) Assume first that~$\varphi \neq 0$ and $\hat{\phi}_n^{\ast} \neq 0$. Then, using~\eqref{eq:phi>1} above, Lemma~2.1 from \cite{Kato2020}, then \eqref{eq:phi>1} again,  yields
	\begin{align}
		\mathcal{M}_{I, - \hat{\phi}_n^{\ast} } (z_i)
		&= 
		T_{\hat{\phi}_n^{\ast}} 
		\big\{
		\mathcal{M}_{I, -\hat{\phi}_n^{\ast}/ \| \hat{\phi}_n^{\ast} \|^2} (z_i) \big\}
		\nonumber
		\\
		&=
		T_{\hat{\phi}_n^{\ast}} \big\{ \mathcal{M}_{I, -\hat{\phi}_n^{\ast}/ \| \hat{\phi}_n^{\ast} \|^2} \big(  \mathcal{M}_{R, \varphi} (x_i) \big) \big\}
		\nonumber
		\\
		&=
		T_{\hat{\phi}_n^{\ast}} 
		\big\{
		\mathcal{M}_{R_n, \mathcal{M}_{I, \varphi} (- R^T \hat{\phi}_n^{\ast}) / \| \mathcal{M}_{I, \varphi} (- R^T \hat{\phi}_n^{\ast}) \|^2 }   (x_i) \big\}
		\nonumber
		\\
		&=
		T_{\hat{\phi}_n^{\ast}} R_n
		\big\{
		\mathcal{M}_{I, \mathcal{M}_{I, \varphi} (- R^T \hat{\phi}_n^{\ast}) / \| \mathcal{M}_{I, \varphi} (- R^T \hat{\phi}_n^{\ast}) \|^2 }   (x_i) \big\}
		\nonumber
		\\
		&=
		T_{\hat{\phi}_n^{\ast}} R_n T_{\mathcal{M}_{I, \varphi} (- R^T \hat{\phi}_n^{\ast})} 
		\big\{\mathcal{M}_{I, \mathcal{M}_{I, \varphi} (- R^T \hat{\phi}_n^{\ast})} (x_i) \big\}
		,
		\label{asi}
	\end{align}
	where $R_n \in \mathrm{SO} (d + 1)$ depends\footnote{The explicit value of $R_n$ is provided by \cite{Kato2020} in Lemma~2.1, but it is not relevant for this proof.} on $R$, $\varphi$ and $\hat{\phi}_n^{\ast}$. Using~(\ref{aa}), we obtain
	$$
	\mathcal{M}_{I, - \hat{\phi}_n^{\ast} } (z_i)
	=
	O_n 
	\big\{\mathcal{M}_{I, - \hat{\phi}_n (x_1, \ldots, x_n)} (x_i) \big\}
	,
	$$
	where $O_n \in \mathrm{SO} (d + 1)$ depends  on $R$, $\varphi$ and $\hat{\phi}_n^{\ast}$. Invariance of~$\mathcal{U}_n$ under rotations thus entails that
	$
	\Dstat_n (x_1, \ldots, x_n)
	=
	\Dstat_n \big( z_1, \ldots, z_n \big)$.
	\vspace{2mm}
	
	(ii) Assume now that $\varphi = 0$. Then, the comment below Lemma~2.1 in \cite{Kato2020} implies that
	\begin{equation*}
		\mathcal{M}_{I, -\hat{\phi}_n^{\ast}} (z_i)
		=
		\mathcal{M}_{I, -\hat{\phi}_n^{\ast}} \big(  \mathcal{M}_{R, 0} (x_i) \big)
		=
		\mathcal{M}_{R, -R^T \hat{\phi}_n^{\ast}} (x_i)
		=
		R
		\mathcal{M}_{I, -R^T \hat{\phi}_n^{\ast}} (x_i)
		,
	\end{equation*}
	so that~(\ref{aa}) yields (recall that~$\mathcal{M}_{I,0}$ is the identity map)
	\begin{equation*}
		\mathcal{M}_{I, -\hat{\phi}_n^{\ast}} (z_i)
		=
		R
		\big\{		\mathcal{M}_{I, - \hat\phi_n(x_1,\ldots,x_n)} (x_i)
		\big\}
		.
	\end{equation*}
	Invariance of $\mathcal{U}_n$ thus again ensures that~$\Dstat_n (x_1, \ldots, x_n)
	=
	\Dstat_n \big( z_1, \ldots, z_n \big)$.
	\vspace{2mm}
	
	(iii) Finally, assume that $\hat{\phi}_n^{\ast} = 0$. In this case,
	\begin{equation*}
		\mathcal{M}_{I, -\hat{\phi}_n^{\ast}} (z_i)
		=
		\mathcal{M}_{I, \varphi} (x_i)
		=
		\mathcal{M}_{I, \mathcal{M}_{I, \varphi} (- R^T \hat{\phi}_n^{\ast})} (x_i)
		=
		\mathcal{M}_{I, - \hat\phi_n(x_1,\ldots,x_n)} (x_i)
		,
	\end{equation*}
	so that we again obtain~$
	\Dstat_n (x_1, \ldots, x_n)
	=
	\Dstat_n \big( z_1, \ldots, z_n \big)$.
	\vspace{2mm}
	
	Since we obtained that~$
	\Dstat_n (x_1, \ldots, x_n)
	=
	\Dstat_n \big( z_1, \ldots, z_n \big)$ in all three cases, the proof is complete.
\end{proof}

\begin{proof}[Proof of Corollary~\ref{CorDF}]
	Letting $U_i := \mathcal{M}_{I , - \phi} (X_i)$, $i=1, \ldots, n$, it follows from~\eqref{eq:CauchyUnif} that \mbox{$U_1, \ldots, U_n$} form a random sample from the uniform distribution over~$\mathbb{S}^d$. Since Theorem~\ref{th:invariance} implies that
	$
	\Dstat_n (X_1, \ldots, X_n)
	=
	\Dstat_n (U_1, \ldots, U_n)
	$, the result is proved.
\end{proof}

\section{Proofs for Section~\ref{sec:estimation}}
\label{Appsec:estimation}

The proof of Proposition~\ref{prop:Mobunique} requires the following preliminary result.

\begin{Lem}
	\label{Lem:Mobunique}
	Let ${\rm P}$ be a probability measure on~$\mathbb{S}^d$ and~$X$ be a random vector with distribution~${\rm P}$. Assume that~$\mathrm{P}[X=x]<\frac12$ for all~$x\in \mathbb{S}^{d}$. Then, there exists~$a\in(0,1)$ such that
	\begin{equation}
		\label{caP}
		c_a(\mathrm{P})
		:=
		\sup_{v \in \mathbb{S}^d}
		\mathrm{P}[v^T X  > a]
		<
		\frac{1}{2}
		.
	\end{equation}
\end{Lem}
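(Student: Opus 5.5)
We argue by contradiction combined with a compactness argument on $\mathbb{S}^d$. Suppose that for every $a\in(0,1)$ we have $c_a(\mathrm{P})\geq \frac12$. Take a sequence $a_k\uparrow 1$. For each~$k$ the supremum defining $c_{a_k}(\mathrm{P})$ is at least $\frac12$, so we can pick $v_k\in\mathbb{S}^d$ with $\mathrm{P}[v_k^TX>a_k]\geq \frac12-\frac1k$. By compactness of $\mathbb{S}^d$, we pass to a subsequence along which $v_k\to v_0\in\mathbb{S}^d$. The goal is then to show that $\mathrm{P}[X=v_0]\geq\frac12$, which contradicts the assumption.

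The key step is a containment of caps. The set $\{x\in\mathbb{S}^d: v_k^Tx>a_k\}$ is a spherical cap centred at $v_k$. Its geodesic (or chordal) radius $r_k$ satisfies $\|x-v_k\|^2=2(1-v_k^Tx)<2(1-a_k)$, so $r_k\to0$. By the triangle inequality, every such $x$ satisfies $\|x-v_0\|\leq \|x-v_k\|+\|v_k-v_0\|<\sqrt{2(1-a_k)}+\|v_k-v_0\|=:\epsilon_k\to 0$. Hence $\{v_k^TX>a_k\}\subseteq\{\|X-v_0\|<\epsilon_k\}$, and therefore
$$
\mathrm{P}[\|X-v_0\|<\epsilon_k]\geq \tfrac12-\tfrac1k .
$$

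To finish, pass to a further subsequence so that $\epsilon_k$ is decreasing, or replace $\epsilon_k$ by $\sup_{j\geq k}\epsilon_j$, which is also monotone and tends to zero. The events $\{\|X-v_0\|<\epsilon_k\}$ then decrease to $\{X=v_0\}$. Continuity from above of the probability measure gives $\mathrm{P}[X=v_0]=\lim_k \mathrm{P}[\|X-v_0\|<\epsilon_k]\geq\frac12$, contradicting $\mathrm{P}[X=x]<\frac12$ for all~$x$. Thus some $a\in(0,1)$ satisfies~(\ref{caP}).

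The only delicate point is the uniformity over $v$ in the supremum. A pointwise limit for fixed $v$ would not suffice, and this is exactly why we extract a convergent subsequence of near-maximizers $v_k$ and control the moving caps by a single shrinking ball around the limit $v_0$. Note also that monotonicity of $a\mapsto c_a(\mathrm{P})$ (nonincreasing) is what justifies working along a sequence $a_k\uparrow1$. If the contradiction hypothesis only held for some $a$, monotonicity would propagate $c_a\geq\frac12$ to all smaller $a$. In the direction we need, however, "no $a$ works" already means it fails for every $a_k$, so no further argument is required.
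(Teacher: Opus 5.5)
Your proof is correct and follows essentially the same route as the paper: argue by contradiction, pick near-maximizing directions $v_k$ for thresholds tending to $1$, extract a convergent subsequence by compactness of $\mathbb{S}^d$, and show that the limit point carries mass at least $\frac12$. The only difference is the last limiting step: you trap the moving caps inside nested shrinking balls around $v_0$ and use continuity from above, whereas the paper uses pointwise convergence of the corresponding indicators together with the Lebesgue Dominated Convergence Theorem.
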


\begin{proof}
	Assume ad absurdum that for all~$a\in(0,1)$, we have~$c_a(\mathrm{P})\geq \tfrac12$. Then, for any positive integer~$k$, there exists~$v_k\in \mathbb{S}^{d}$ with
	\begin{equation}
		\label{preDCT}
		\mathrm{P}\bigg[v_k^T X  > 1-\frac{1}{k} \bigg]
		>
		\frac{1}{2} - \frac{1}{k}
		.
	\end{equation}
	Since~$\mathbb{S}^{d}$ is compact, the sequence~$(v_k)$ admits a subsequence~$(v_{k_\ell})$ converging in~$\mathbb{S}^d$. Denote the limit as~$v_\star$. For any~$x\in\mathbb{S}^d$, we have
	$$
	\lim_{\ell\to\infty}
	\mathbbm{1}\bigg[v_{k_\ell}^T x  > v_{k\ell}^T v_{\star} -\frac{1}{k_\ell}
	\bigg]
	=
	\mathbbm{1}\big[x = v_{\star}
	\big]
	,
	$$
	so that
	the Lebesgue Dominated Convergence Theorem and~\eqref{preDCT} provide
	$$
	\mathrm{P}\big[X = v_{\star}
	\big]
	=
	\lim_{\ell\to\infty}
	\mathrm{P}\bigg[ v_{k_\ell}^T X  > v_{k\ell}^T v_{\star} -\frac{1}{k_\ell}
	\bigg]
	\geq
	\frac12
	,
	$$
	which is a contradiction.
\end{proof}

\begin{proof}[Proof of Proposition~\ref{prop:Mobunique}]
	First, we show that every solution of~\eqref{eq:Mobiusmean} is a strict local maximum of~$G:\mathbb{B}^{d+1}\to\R$. A routine application of the Lebesgue Dominated Convergence Theorem shows that~$G$ is twice (continuously) differentiable, with Hessian matrix
	\begin{equation*}
		H  G (\varphi)
		=
		4 
		\E \bigg[ \frac{(X - \varphi) (X - \varphi)^T}{ \| X - \varphi \|^4} \bigg]
		- 2 
		\E \bigg[ \frac{1}{ \| X - \varphi \|^2} \bigg]
		I_{d+1}
		- 4 \frac{\varphi \varphi^T}{(1 - \|  \varphi \| ^2)^2}
		- \frac{2}{1 - \|  \varphi \| ^2} I_{d+1}
		.
	\end{equation*}
	Let $\phi^{\ast}$ be an arbitrary solution of~\eqref{eq:Mobiusmean}. By construction, $Z = \mathcal{M}_{I, - \phi^{\ast}} (X)$ satisfies
	$$
	\E [Z] = 0
	\quad
	\text{and}
	\quad
	\frac{ Z + \phi^{\ast}}{1 - \| \phi^{\ast} \|^2}
	=
	\frac{ X - \phi^{\ast}}{\| X - \phi^{\ast} \|^2}
	,
	$$
	so that standard algebra shows that
	\begin{equation*}
		H  G (\phi^{\ast})
		=
		\frac{4}{(1 - \|  \phi^{\ast} \| ^2)^2} \big( \E[ Z Z^T] - I_{d+1} \big).
	\end{equation*}
	
	For any~$v \in \mathbb{S}^d$, the Cauchy--Schwarz inequality ensures that $\E [(Z^T v)^2] \leq \E [ \| Z \|^2] = 1$. If the equality occurs, then $Z\in\{v,-v\}$ with~$\rm P$-probability one, hence~$\max({\rm P}[Z=v],{\rm P}[Z=-v])\geq \frac12$, which entails that
	$$
	\max({\rm P}[X=\mathcal{M}_{I, - \phi^{\ast}}^{-1}(v)],{\rm P}[X=\mathcal{M}_{I, - \phi^{\ast}}^{-1}(-v)])\geq \frac12
	.
	$$
	Since this contradicts the assumptions of the proposition, we have $\E [(Z^T v)^2] < 1$ for any~$v \in \mathbb{S}^d$. Consequently, $HG (\phi^{\ast})$ is a negative definite matrix, so that~$\phi^{\ast}$ is a strict local maximum of~$G$.
	
	Now, we are going to show that~\eqref{eq:Mobiusmean} has a unique solution applying Theorem~2.1 by \cite{Gabrielsen1986} to $f(\varphi):=\exp( G (\varphi))$  with $S =\mathbb{B}^{d+1}$. It is clear that~$f$ satisfies Assumptions~(a)--(b) of that theorem, and we have shown above that it also satisfies Assumption~(d). Therefore, it only remains to prove that Assumption~(c) holds too. To do so, we apply Lemma~\ref{Lem:Mobunique} to pick~$a\in(0,1)$ such that the quantity~$c_a({\rm P})$ in Equation~\eqref{caP} is strictly smaller than~$\frac12$. Since
	$$
	\log \| X - \varphi \|^{2} 
	\geq
	\log ( 1+\| \varphi \|^{2} -2\| \varphi \|)
	=
	2\log (1 - \| \varphi \|)
	,
	$$
	it then follows that, for any $\varphi \neq 0$ and~$a\in(0,1)$,
	\begin{align}
		G (\varphi)
		=&
		\log (1 + \| \varphi \|) + \log (1 - \| \varphi \|) - \E[
		\log \| X - \varphi \|^{2}
		]
		\nonumber
		\\
		= &
		\log (1 + \| \varphi \|) + \log (1 - \| \varphi \|)
		- \E \bigg[
		\mathbbm{1} \bigg[ \frac{\varphi^T X}{\| \varphi \|}  > a \bigg]		\log \| X - \varphi \|^{2} 
		\bigg]
		\nonumber
		\\
		&- \E \bigg[
		\mathbbm{1} \bigg[ \frac{\varphi^T X}{\| \varphi \|}  \leq a \bigg]
		\log \| X - \varphi \|^{2} 
		\bigg]
		\nonumber
		\\
		\leq&
		\log (1 + \| \varphi \|) 
		+
		\log (1 - \| \varphi \|) 
		\bigg(1 - 2 \mathrm{P} \bigg[ \frac{\varphi^T X}{\| \varphi \|}  > a \bigg]
		\bigg)
		\nonumber
		\\
		&-
		\log (1 + \| \varphi \|^2 - 2 a \| \varphi \|)
		\mathrm{P} \bigg[ \frac{\varphi^T X}{\| \varphi \|}  \leq a \bigg]
		\nonumber
		\\
		\leq&
		\log (1 + \| \varphi \|) +  \log (1 - \| \varphi \|) 
		(1-2c_a({\rm P}))
		+
		|
		\log (1 + \| \varphi \|^2 - 2 a \| \varphi \|)
		|
		.
		\label{tobeusedinGC}
	\end{align}
	Since~$c_a({\rm P})<\frac12$, we have that~ $\lim_{\| \varphi \| \to 1^{-}} G(\varphi) = - \infty$, so that~$f(\varphi)=\exp( G(\varphi))$ satisfies Assumption~(c) in Theorem~2.1 from \cite{Gabrielsen1986}. Applying this theorem establishes the result.
\end{proof}

\begin{proof}[Proof of Proposition~\ref{Prop:ML}]
	(i) Fix pairwise different $x_1,\ldots,x_n\in\mathbb{S}^d$ with $n\geq 3$. Then, the empirical probability measure assigns at most probability $\frac{1}{n}(<\frac12)$ to any singleton, so that the result follows from Proposition~\ref{prop:Mobunique}.
	(ii). If~${\rm P}$ is non-atomic, then 
	$$
	{\rm P}[ X_1=X_2 ]
	=
	\int_{\mathbb{S}^d}
	{\rm P}[ X_1=x|X_2=x ] \, d{\rm P}(x)
	=
	\int_{\mathbb{S}^d}
	{\rm P}[ X_1=x ] \, d{\rm P}(x)
	=
	0
	.
	$$
	Thus, the~$X_i$'s are pairwise different almost surely, so that the result follows from Part~(i).
\end{proof}

\begin{proof}[Proof of Theorem~\ref{TheorIvarianceMobMean}]
	Fix~$n\geq 3$, $(x_1,\ldots,x_n)\in{\cal X}^n$, $R \in \mathrm{SO} (d + 1)$, and $\varphi \in \mathbb{B}^{d+1}$. For any~$i=1,\ldots,n$, let~$z_i := \mathcal{M}_{R,  \varphi} ( x_i )$. Since~$\mathcal{M}_{R, \varphi}$ is a one-to-one from~$\mathbb{S}^d$ to itself, we have that $(z_1,\ldots,z_n)\in{\cal X}^n$, so that Proposition~\ref{Prop:ML}(i) entails that~$\hat{\phi}_{n}=\hat{\phi}_{n}(x_1,\ldots,x_n)$ and~$\hat{\phi}_{n}^*=\hat{\phi}_{n}(z_1,\ldots,z_n)$ are both well-defined and are the unique solutions of
	\begin{equation}
		\label{eq:meanzetas}
		\sum_{i = 1}^{n} \mathcal{M}_{I, - \hat{\phi}_n} (x_i)
		=
		0
		\quad
		\textrm{ and }
		\quad
		\sum_{i = 1}^{n} \mathcal{M}_{I, - \hat{\phi}_n^{\ast} } (z_i)
		=
		0
		,
	\end{equation}
	respectively.     We then consider three cases.
	\vspace{2mm}
	
	(i) Assume first that~$\varphi \neq 0$ and~$\hat{\phi}_n^{\ast} \neq 0$. Then, the same computation as in~(\ref{asi}) provides
	$$
	\mathcal{M}_{I, - \hat{\phi}_n^{\ast} } (z_i)
	=
	T_{\hat{\phi}_n^{\ast}} R_n T_{\mathcal{M}_{I, \varphi} (- R^T \hat{\phi}_n^{\ast})} 
	\big\{\mathcal{M}_{I, \mathcal{M}_{I, \varphi} (- R^T \hat{\phi}_n^{\ast})} (x_i) \big\}
	,
	\quad
	i=1, \ldots, n
	,
	$$
	where $R_n \in \mathrm{SO} (d + 1)$ is a rotation matrix that depends on $R$, $\varphi$ and $\hat{\phi}_n^{\ast}$. Thus, \eqref{eq:meanzetas} yields
	\begin{equation*}
		\sum_{i = 1}^{n} \mathcal{M}_{I, - \hat{\phi}_n} (x_i)
		=
		0
		=
		\sum_{i = 1}^{n} \mathcal{M}_{I, \mathcal{M}_{I, \varphi} (- R^T \hat{\phi}_n^{\ast}) } (x_i)
		.
	\end{equation*}
	Consequently, 
	$
	- \hat{\phi}_n 
	=
	\mathcal{M}_{I, \varphi} (- R^T \hat{\phi}_n^{\ast}) 
	$,
	or equivalently
	\begin{equation}
		\label{aswanted}
		\hat{\phi}_n^{\ast}
		=
		-R\mathcal{M}_{I, -\varphi} (- \hat{\phi}_n)
		=
		\mathcal{M}_{R,\varphi} (\hat{\phi}_n)
		,
	\end{equation}
	as we wanted to show. 
	\vspace{2mm}
	
	(ii) Assume now that $\varphi = 0$. Then, the comment below Lemma~2.1 in \cite{Kato2020} implies that
	\begin{equation*}
		\mathcal{M}_{I, -\hat{\phi}_n^{\ast}} (z_i)
		=
		\mathcal{M}_{I, -\hat{\phi}_n^{\ast}} \big(  \mathcal{M}_{R, 0} (x_i) \big)
		=
		\mathcal{M}_{R, -R^T \hat{\phi}_n^{\ast}} (x_i)
		=
		R
		\mathcal{M}_{I, -R^T \hat{\phi}_n^{\ast}} (x_i)
		,
	\end{equation*}
	so that~\eqref{eq:meanzetas} yields
	\begin{equation*}
		\sum_{i = 1}^{n} \mathcal{M}_{I, - \hat{\phi}_n} (x_i)
		=
		0
		=
		\sum_{i = 1}^{n} 
		\mathcal{M}_{I, -R^T \hat{\phi}_n^{\ast}} (x_i)
		.
	\end{equation*}
	This entails that 
	$
	\hat{\phi}_n
	=
	R^T \hat{\phi}_n^{\ast}
	$, which, since~$\mathcal{M}_{I,0}$ is the identity map, rewrites again as~(\ref{aswanted}).
	\vspace{2mm}
	
	(iii) Finally, assume that $\hat{\phi}_n^{\ast} = 0$. In this case,
	$		\mathcal{M}_{I, -\hat{\phi}_n^{\ast}} (z_i)
	=z_i=
	\mathcal{M}_{I,\varphi} (x_i)
	$,
	so that
	\begin{equation*}
		\sum_{i = 1}^{n} \mathcal{M}_{I, - \hat{\phi}_n} (x_i)
		=
		0
		=
		\sum_{i = 1}^{n} 
		\mathcal{M}_{I,\varphi} (x_i)
		,
	\end{equation*}
	hence~$\hat{\phi}_n=-\varphi$. Therefore,
	$		\mathcal{M}_{R, \varphi}(\hat{\phi}_n)
	=
	\mathcal{M}_{R, \varphi}( - \varphi)
	=
	0
	=
	\hat{\phi}_n^*
	$, which establishes~(\ref{aswanted}).
	\vspace{2mm}
	
	Since we proved~(\ref{aswanted}) in all three cases, the proof is complete.
\end{proof}

\begin{proof}[Proof of Proposition~\ref{PropBahadur}]
	First, we are going to prove that $\hat{\phi}_n$ is a consistent estimator of~$\phi$ by relying on Corollary 3.2.3(ii) from \cite{Vaart2023} (hereafter, \citetalias{Vaart2023}). For any~$\varphi \in \mathbb{B}^{d+1}$, define
	\begin{equation}
		m_{\varphi} (x)
		=
		\log \bigg(
		\frac{1 - \| \varphi \|^2}{\| x - \varphi \|^2}
		\bigg),
		\qquad
		x \in \mathbb{S}^d
		.
	\end{equation}	
	By construction,
	$$
	\phi =
	\argmax_{ \varphi \in \mathbb{B}^{d+1} } 
	\E [m_{\varphi} (X)],
	\quad
	\text{and}
	\quad
	\hat{\phi}_n
	=
	\argmax_{ \varphi \in \mathbb{B}^{d+1} } 
	\frac{1}{n} \sum_{i = 1}^n m_{\varphi} (X_i).
	$$
	By the Lebesgue Dominated Convergence Theorem, the mapping $\varphi \mapsto \E [m_{\varphi} (X)]$ is continuous. Since~$\rm P$ is non-atomic, Proposition~\ref{prop:Mobunique} implies that this mapping achieves its unique maximum at~$\phi$.
	
	Now, we are going to show that, for any compact set $K \subset \mathbb{B}^{d+1}$, the class $\{ m_{\varphi} \colon \varphi \in K \}$ is Glivenko-Cantelli. Fix such a compact set~$K \subset \mathbb{B}^{d+1}$ and take $M = \max_{\varphi \in K} \| \varphi \|(<1)$. For any $x \in \mathbb{S}^d$ and~$\phi, \theta \in K$, one has
	\begin{equation}
		\label{diffm}
		\vert m_{\varphi} (x) - m_{\theta} (x) \vert
		\leq
		\bigg\vert
		\log \bigg(
		\frac{1 - \| \varphi \|^2}{1 - \| \theta \|^2}
		\bigg)
		\bigg\vert
		+
		2
		\bigg\vert
		\log \bigg(
		\frac{ \| x - \theta \|}{ \| x - \varphi \| }
		\bigg)
		\bigg\vert.
	\end{equation}
	On one side,
	\begin{align*}
		\log \bigg(
		\frac{1 - \| \varphi \|^2}{1 - \| \theta \|^2}
		\bigg)
		=&
		\log \bigg(
		1 + \frac{ \| \theta \|^2 - \| \varphi \|^2}{1 - \| \theta \|^2}
		\bigg)
		\\
		\leq&
		\frac{( \| \theta \| + \| \varphi \|) (\| \theta \| - \| \varphi \|)}{1 - \| \theta \|^2} 
		\\
		\leq&
		\frac{2}{1 - M} \| \theta - \varphi \|.
	\end{align*}
	By symmetry,
	\begin{equation*}
		-
		\log \bigg(
		\frac{1 - \| \varphi \|^2}{1 - \| \theta \|^2}
		\bigg)
		=
		\log \bigg(
		\frac{1 - \| \theta \|^2}{1 - \| \varphi \|^2}
		\bigg)
		\leq
		\frac{2}{1 - M} \| \theta - \varphi \|.
	\end{equation*}
	On the other side,
	\begin{equation*}
		\log \bigg(
		\frac{ \| x - \theta \|}{ \| x - \varphi \| }
		\bigg)
		\leq
		\log \bigg(
		1 + \frac{ \| \theta - \varphi \|}{ \| x - \varphi \| }
		\bigg)
		\leq
		\frac{ \| \theta - \varphi \|}{ \| x - \varphi \| }
		\leq
		\frac{ \| \theta - \varphi \|}{ 1 - M }
	\end{equation*}
	and, by symmetry,
	\begin{equation*}
		-
		\log \bigg(
		\frac{ \| x - \theta \|}{ \| x - \varphi \| }
		\bigg)
		=
		\log \bigg(
		\frac{ \| x - \varphi \|}{ \| x - \theta \| }
		\bigg)
		\leq
		\frac{ \| \theta - \varphi \|}{ 1 - M }.
	\end{equation*}
	Thus, (\ref{diffm}) yields
	\begin{equation}
		\label{eq:Lips}
		\vert m_{\varphi} (x) - m_{\theta} (x) \vert
		\leq
		\frac{4}{1 - M} \| \theta - \varphi \|
		\quad
		\forall x \in \mathbb{S}^d
		\quad
		\forall \varphi, \theta \in K
		.
	\end{equation}
	Let $\varepsilon>0$ and take $\eta = (1 - M) \varepsilon/8$. Since $K$ is  compact, there exists a finite subset $C \subset K$ such that
	$$
	K \subset \bigcup_{c \in C} B_{\eta}(c),
	$$
	where~$B_{\eta}(c)$ denotes the open ball with center~$c$ and radius~$\eta$. Therefore, for any $\varphi \in K$, there exists a point $c_{\varphi} \in C$ such that $\| \varphi - c_{\varphi} \| < \eta$, hence such that
	$$
	\int_{\mathbb{S}^d}
	\vert m_{\varphi} (x) - m_{c_{\varphi}} (x) \vert
	\, d{\rm P}(x)
	\leq
	\sup_{x\in \mathbb{S}^d}
	\vert m_{\varphi} (x) - m_{c_{\varphi}} (x) \vert
	<
	\frac{\varepsilon}{2}
	.
	$$
	Thus, 
	$
	\lbrace
	[ m_{c} - \frac{\varepsilon}{2}, m_{c} + \frac{\varepsilon}{2}] \colon c \in C
	\rbrace
	$
	is a finite collection of $\varepsilon$-brackets for the $L_1({\rm P})$-norm that covers $\{ m_{\varphi} \colon \varphi \in K \}$. Consequently, the Glivenko--Cantelli Theorem \citep[Theorem 2.4.1]{Vaart2023} ensures that the class $\{ m_{\varphi} \colon \varphi \in K \}$ is Glivenko--Cantelli.
	
	Finally, we will show that the sequence~$\hat{\phi}_n$ is uniformly tight. Recall the notation 
	$$
	c_a(\mathrm{P})
	:=
	\sup_{v \in \mathbb{S}^d}
	\mathrm{P}[v^T X  > a]
	$$
	from Lemma~\ref{Lem:Mobunique} and use this lemma to pick~$a \in (0, 1)$ such that $c_a(\mathrm{P})<1/2$.    Applying~\eqref{tobeusedinGC} at~${\rm P}={\rm P}_n$ yields that
	$$
	\frac{1}{n} \sum_{i = 1}^n m_{\varphi} (X_i)
	\leq
	\log (1 + \| \varphi \|) +  \log (1 - \| \varphi \|) 
	(1-2c_a({\rm P}_n))
	+
	|
	\log (1 + \| \varphi \|^2 - 2 a \| \varphi \|)
	|
	.
	$$
	If
	\begin{equation}
		\label{Cn}        
		c_a({\rm P}_n)
		\leq
		\frac12 \Big( 
		c_a({\rm P}) +\frac12 \Big)
		,
	\end{equation}
	then
	$$
	1-2c_a({\rm P}_n)
	\geq
	\frac12
	-
	c_a({\rm P})
	(>0)
	,
	$$
	so that
	$$
	\frac{1}{n} \sum_{i = 1}^n m_{\varphi} (X_i)
	\leq
	\log (1 + \| \varphi \|) +  \log (1 - \| \varphi \|) 
	\Big(
	\frac12
	-
	c_a({\rm P})
	\Big)
	+
	|
	\log (1 + \| \varphi \|^2 - 2 a \| \varphi \|)
	|
	.
	$$
	
	For any~$t \in (0, 1)$, let then
	$$
	H (t) = \log (1 +t) + \log (1 - t)
	\Big(
	\frac12
	-
	c_a({\rm P})
	\Big)
	+ \big\vert \log (1 + t^2 - 2 a t ) \big\vert
	.
	$$
	Since $\lim_{t \to 1^{-}} H(t) = - \infty$, there exists~$\tau \in (0, 1)$ such that $H (t) < -1$ for all~$t > \tau$. Hence, if~(\ref{Cn}) is satisfied, then all $\varphi \in \R^{d+1}$ with~$\tau <\| \varphi \| < 1$ satisfy
	\begin{equation*}
		\frac{1}{n} \sum_{i = 1}^n m_{\varphi} (X_i)
		\leq
		H (\| \varphi \|) < -1
		.
	\end{equation*}
	Since~$m_{0} (x) = 0$ for all $x \in \mathbb{S}^d$, we conclude that if~(\ref{Cn}) is satisfied, then~$\| \hat{\phi}_n \| \leq \tau$. Since~$c_a({\rm P}_n)\to c_a({\rm P})$ in probability (the collection of spherical caps on~$\mathbb{S}^d$ is indeed a Vapnik–Chervonenkis class), it follows that
	\begin{eqnarray*}
		\mathrm{P}[ \| \hat{\phi}_n \| > \tau]
		&\leq & 
		\mathrm{P} \bigg[
		c_a({\rm P}_n)
		>
		\frac12 \Big( 
		c_a({\rm P}) +\frac12 \Big)
		\bigg]
		\\[2mm]
		& = &
		\mathrm{P} \bigg[
		c_a({\rm P}_n)-c_a({\rm P})
		>
		\frac12 \Big( 
		\frac12-c_a({\rm P}) \Big)
		\bigg]
		\\[2mm]
		& \to &
		0
		.
	\end{eqnarray*}
	Thus, the sequence~$\hat{\phi}_n$ is uniformly tight. Hence, Corollary 3.2.3(ii) from \citetalias{Vaart2023} establishes that~$\hat{\phi}_n\to \phi$ in probability.

	Now, note that the function $(x,\varphi)\mapsto m_{\varphi} (x) = \log ( 1 - \| \varphi \|^2 ) - \log  \| x - \varphi \|^2$ is a $\mathcal{C}^{\infty}$ at any~$(x,\varphi)$ with~$\varphi\in\mathbb{B}^{d+1}$ and $x \neq \varphi$. Hence, the function
	\begin{equation*}
		(x,h)
		\mapsto 
		L(x, h) :=
		\left\lbrace
		\begin{array}{lr}
			\dfrac{m_{\phi + h} (x) - m_{\phi} (x) - h^T \nabla_{\varphi} m_{\varphi} (x)|_{\varphi=\phi} }{\| h \|} & \quad \text{if } h \neq 0 \\
			0 & \quad \text{if } h = 0
		\end{array}
		\right.
	\end{equation*}
	is continuously differentiable on 
	$$
	\{x \in \R^{d+1} \colon \| x \| > (1 + \| \phi \|)/2 \}
	\times 
	\{h \in \R^{d+1} \colon \| h \| < (1 - \| \phi \|)/2 \}
	.
	$$
	Fix $x \in \mathbb{S}^d$ and $h \in \bar{B}_{r(\phi)}(0)$, the closed ball with center~$0$ and radius~$r(\phi):=(1 - \| \phi \|)/4$. The mean value theorem ensures that there exists a point $h^{\ast} \in \bar{B}_{r(\phi)}(0)$ such that
	\begin{equation*}
		L(x, h)
		=
		h^T \nabla_{h} L(x, h^{\ast}).
	\end{equation*}
	Since the function $\nabla_{h} L$ is continuous in the compact set $\mathbb{S}^{d} \times \bar{B}_{r(\phi)}(0)$, there exists a positive constant~$M$ such that~$		\vert L(x, h) \vert \leq M \| h \|
	$.
	Consequently, for all $\| h \| \leq r(\phi)$,
	\begin{equation*}
		\E \big[ \{m_{\phi + h} (X_1) - m_{\phi} (X_1) - h^T \nabla_{\varphi} m_{\varphi} (X_1)|_{\varphi=\phi} \}^2 \big]
		=
		\| h \|^2  \E[ \{ L(X_1, h) \}^2 ]
		\leq M^2 \| h \|^4
		.
	\end{equation*}
	Moreover, it was established in the proof of Proposition~\ref{prop:Mobunique} that, when~$P$ is non-atomic, the map $G(\varphi) = \E [m_{\varphi} (X)]$ is twice continuously differentiable and the Hessian matrix at~$\phi$ is
	\begin{equation*}
		H G(\phi)
		=
		\frac{4}{(1 - \|  \phi \| ^2)^2} ( \E[ \mathcal{M}_{I, - \phi} (X_1) \mathcal{M}_{I, - \phi} (X_1)^T ] - I ) = \frac{2}{(1 - \|  \phi \| ^2)} W_{\phi},
	\end{equation*}
	which is a negative definite matrix.
	The result then follows from Example~3.2.25 in \citetalias{Vaart2023} taking into account~\eqref{eq:Lips} and the fact that $\nabla_{\varphi} m_{\varphi} (x)|_{\varphi=\phi} = 2(1 - \|  \phi \| ^2)^{-1}\mathcal{M}_{I, - \phi} (x)$.
\end{proof}

\section{Proofs for Section~\ref{secNull}}
\label{AppsecNull}  

The proof of Theorem~\ref{TheorAsymptDistr} ultimately relies on the classical theory of V-statistics with estimated parameters derived by \cite{Wet1987}. However, directly applying their results would lead to stronger assumptions on the measure~$W$ in Theorem~\ref{TheorAsymptDistr}. This is caused by the particular shape of the cumulative distribution function $F_d$ defined in \eqref{FdExpr}. Note that $F_d$ is not a Lipschitz function for $d = 1$ (its derivative goes to $+\infty$ at $-1$ and $1$). Due to this, the Lipschitz assumption~(2.12) in \cite{Wet1987} does not hold unless we impose some condition on the behavior of $W$ around~$0$ and~$1$.

The following result provides a modern formulation of Theorem~2.16 from \cite{Wet1987} that replaces the problematic assumption with an empirical process version of it, thus avoiding that issue.

\begin{Theor}
	\label{TheorRandles}
	Let~$X_1,\ldots,X_n$ be a random sample from a probability measure~${\rm P}_{\phi}$ on~$(\mathcal X,\mathcal A)$, where~$\phi$ belongs to an open subset~$\Gamma$ of~$\mathbb R^p$. Let~$M$ be a finite measure on~$(\mathcal T,\mathcal{B})$. 
	Let $g:\mathcal X\times\mathcal{T}\times\Gamma\to\mathbb R$ be a measurable function satisfying 
	\begin{equation}
		\label{gAssum}
		\sup_{(x,t,\gamma)\in \mathcal X\times\mathcal T\times\Gamma}
		|g(x,t;\gamma)|
		<
		\infty
		\quad
		\textrm{ and }
		\quad
		\int_{\mathcal T} {\rm E}[\{g(X_1,t;\gamma)\}^2]
		\,
		dM(t)
		<
		\infty
		\quad
		\forall
		\gamma\in\Gamma.
	\end{equation}
	Let
	$$
	\hat{V}_n
	=
	\frac{1}{n}
	\sum_{i,j=1}^n
	h(X_i,X_j;\hat{\phi})
	,
	\quad
	\textrm{ with }
	\,
	h(x,y;\gamma)
	=
	\int_{\mathcal{T}} 
	g(x,t;\gamma)
	g(y,t;\gamma)
	\,
	dM(t)
	,
	$$
	where the estimator~$\hat{\phi}=\hat{\phi}_n$ of~$\phi$ satisfies
	\begin{equation}
		\label{BahadurAssum}
		\sqrt{n}
		(\hat{\phi}-\phi)
		=
		\frac{1}{\sqrt{n}}
		\sum_{i=1}^n
		\alpha(X_i)
		+
		o_{\rm P}(1)
		,
	\end{equation}
	with a function~$\alpha(\cdot)$ such that~${\rm E}[\alpha(X_1)]=0$ and~${\rm E}[\|\alpha(X_1)\|^2]<\infty$. Letting~$\mu_\phi(t;\gamma):={\rm E}[g(X_1,t;\gamma)]$, assume that~$\mu_\phi(t;\phi)=0$ and that there exists a vector~$\nabla_\gamma\mu_\phi(t;\phi)$ such that
	\begin{equation}
		\label{GradFin}
		\int_{\mathcal{T}} 
		\|
		\nabla_\gamma\mu_\phi(t;\phi)
		\|^2
		\,
		dM(t)
		<
		\infty
	\end{equation}
	and
	\begin{equation}
		\label{intt1}
		\frac{1}{\|\gamma-\phi\|^2}
		\int_{\mathcal{T}} 
		\big\{
		\mu_\phi(t;\gamma)
		-
		(\gamma-\phi)^T
		\nabla_\gamma\mu_\phi(t;\phi)
		\big\}^2
		dM(t)
		\to 0
	\end{equation} 
	as~$\gamma$ converges to~$\phi$. 
	Assume further that
	\begin{equation}
		\label{stochequicont}
		\sup_{t \in \mathcal{T}}
		\bigg\vert
		\frac{1}{\sqrt{n}}
		\sum_{i=1}^n
		\big\{
		g(X_i,t;\hat{\phi}) 
		-
		g(X_i,t;\phi)
		- 
		\mu_\phi(t,\hat{\phi})
		\big\}
		\bigg\vert
		=
		o_{\rm P}(1)
		.
	\end{equation}
	Let
	\begin{eqnarray*}
		\lefteqn{
			\hspace{0mm}
			V_n
			:=
			\frac{1}{n}
			\sum_{i,j=1}^n
			h_*(X_i,X_j;\phi)
			,
			\quad \textrm{with }
			\
			h_*(x,y;\phi)
			:=
			\int_{\mathcal{T}} 
			\big\{
			g(x,t;\phi) + (\nabla_\gamma\mu_\phi(t;\phi))^T \alpha(x)
			\big\}
			}
			\\[2mm]
			& & 
			\hspace{70mm}
			\times 
			\big\{
			g(y,t;\phi) + (\nabla_\gamma\mu_\phi(t;\phi))^T \alpha(y)
			\big\}
			\,
			dM(t)
			.
		\end{eqnarray*}
		Then, 
		$$
		\hat{V}_n
		=
		V_n
		+o_{\rm P}(1)
		\stackrel{\mathcal{L}}{\to}
		\sum_{k=1}^\infty \lambda_k Q_k
		,
		$$
		where~$\lambda_1,\lambda_2,\ldots$ are the eigenvalues of the operator~$A$ defined by
		$$
		(Aq)(x)
		=
		\int_{\mathcal{X}}
		h_*(x,y;\phi)
		q(y)
		\,
		d{\rm P}(y)
		$$
		and
		where~$Q_1,Q_2,\ldots$ are \mbox{i.i.d.} $\chi^2_1$. 
	\end{Theor}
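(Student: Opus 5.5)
The plan is to recast $\hat V_n$ as a squared norm in $L^2(M)$ and expand the estimated process around $\phi$. Define the empirical process
$$
\hat{S}_n(t):=\frac{1}{\sqrt n}\sum_{i=1}^n g(X_i,t;\hat\phi).
$$
The first key observation is that $\hat V_n=\int_{\mathcal T}\hat S_n(t)^2\,dM(t)$, which follows from Fubini; Fubini applies because $g$ is bounded and $M$ is finite. Similarly, $V_n=\int_{\mathcal T} S_n^*(t)^2\,dM(t)$, where
$$
S_n^*(t):=\frac1{\sqrt n}\sum_{i=1}^n\bigl\{g(X_i,t;\phi)+(\nabla_\gamma\mu_\phi(t;\phi))^T\alpha(X_i)\bigr\}.
$$
By the triangle inequality in $L^2(M)$, the approximation $\hat V_n=V_n+o_{\rm P}(1)$ therefore follows once we show that $\|\hat S_n-S_n^*\|_{L^2(M)}=o_{\rm P}(1)$ and that $\|S_n^*\|_{L^2(M)}=O_{\rm P}(1)$. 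Indeed,
$$
|\hat V_n-V_n|\le\|\hat S_n-S_n^*\|\,\bigl(\|\hat S_n\|+\|S_n^*\|\bigr).
$$

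To obtain the $L^2(M)$ approximation, decompose
$$
\hat S_n(t)-S_n^*(t)=A_n(t)+B_n(t)+C_n(t),
$$
with the three terms defined as follows.
\begin{itemize}
\item $A_n(t)=n^{-1/2}\sum_i\{g(X_i,t;\hat\phi)-g(X_i,t;\phi)-\mu_\phi(t;\hat\phi)\}$. This is $o_{\rm P}(1)$ uniformly in $t$ by (\ref{stochequicont}), so its $L^2(M)$ norm is $o_{\rm P}(1)$ because $M$ is finite.
\item $B_n(t)=\sqrt n\{\mu_\phi(t;\hat\phi)-(\hat\phi-\phi)^T\nabla_\gamma\mu_\phi(t;\phi)\}$. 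Its squared $L^2(M)$ norm equals $n\|\hat\phi-\phi\|^2$ times the ratio in (\ref{intt1}) evaluated at $\gamma=\hat\phi$. Since $\hat\phi\to\phi$ in probability (a consequence of (\ref{BahadurAssum})), this ratio is $o_{\rm P}(1)$, while $n\|\hat\phi-\phi\|^2=O_{\rm P}(1)$. A standard argument (for each $\epsilon$ choose $\delta$, then work on the event $\|\hat\phi-\phi\|<\delta$) handles the random argument, with the convention that the ratio is zero when $\hat\phi=\phi$.
\item $C_n(t)=\{\sqrt n(\hat\phi-\phi)-n^{-1/2}\sum_i\alpha(X_i)\}^T\nabla_\gamma\mu_\phi(t;\phi)$. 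Its norm is bounded by the $o_{\rm P}(1)$ vector in (\ref{BahadurAssum}) times $(\int\|\nabla_\gamma\mu_\phi\|^2dM)^{1/2}$, which is finite by (\ref{GradFin}).
\end{itemize}

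For tightness of $\|S_n^*\|$, note that $\mu_\phi(t;\phi)=0$ and ${\rm E}[\alpha(X_1)]=0$, so $S_n^*$ is a normalized sum of i.i.d.\ centred $L^2(M)$-valued random elements. We have
$$
{\rm E}\|S_n^*\|^2={\rm E}\int\{g(X_1,t;\phi)+\nabla_\gamma\mu_\phi^T\alpha(X_1)\}^2\,dM(t)<\infty
$$
by (\ref{gAssum}), (\ref{GradFin}), ${\rm E}\|\alpha(X_1)\|^2<\infty$ and Cauchy--Schwarz. Markov's inequality then gives $\|S_n^*\|=O_{\rm P}(1)$.

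For the limiting distribution, I would treat $V_n$ as a degenerate V-statistic with the symmetric positive semidefinite kernel $h_*$. The kernel is degenerate because $\int h_*(x,y)\,d{\rm P}(y)=0$, which holds since each factor is centred. Moreover ${\rm E}|h_*(X_1,X_1)|={\rm E}\|Y_1\|^2<\infty$, where $Y_1$ denotes the $L^2(M)$-valued summand. The cleanest route is the Hilbert-space central limit theorem: $S_n^*$ converges in distribution to a centred Gaussian element $G$ of $L^2(M)$ whose covariance operator is that of $Y_1$, and the continuous mapping theorem gives $V_n=\|S_n^*\|^2\stackrel{\mathcal L}{\to}\|G\|^2=\sum_k\lambda_kQ_k$ via the Karhunen--Lo\`eve expansion. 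It remains to identify the $\lambda_k$. The covariance operator of $Y_1$ on $L^2(M)$, namely $TT^*$ with $T:L^2({\rm P})\to L^2(M)$, $(Tq)(t)=\int\{g(y,t;\phi)+\nabla\mu^T\alpha(y)\}q(y)\,d{\rm P}(y)$, has the same nonzero eigenvalues as $T^*T=A$. Alternatively, one could cite Serfling's Theorem~6.4.1B directly.

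I expect the main obstacle to be the $B_n$ term, specifically the careful handling of a condition stated for deterministic $\gamma\to\phi$ when it is evaluated at the random $\hat\phi$, together with checking measurability of the integrals. Everything else is routine once the $L^2(M)$ norm representation is in place.
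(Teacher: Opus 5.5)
Your proposal is correct and follows essentially the paper's argument. Your terms $A_n$, $B_n$, $C_n$ are exactly the pieces the paper controls as $S_{n1}$ and $T_{n1}$, using (\ref{stochequicont}), (\ref{intt1}) and (\ref{BahadurAssum})--(\ref{GradFin}) in the same places. The differences are in presentation: you compare $\hat V_n$ and $V_n$ directly through the triangle inequality in $L^2(M)$, whereas the paper passes through an intermediate $Y_n$ and handles cross terms with Cauchy--Schwarz; your second-moment bound on $\|S_n^*\|$ is also a bit cleaner than the paper's V-statistic argument. For the limit, the paper simply cites Serfling's Theorem~6.4.1B, which avoids the separability and measurability caveat your Hilbert-space CLT route would need for a general $(\mathcal T,\mathcal B)$.
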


	\begin{proof}[Proof of Theorem~\ref{TheorRandles}]
		Let
		$$
		Y_n
		:=
		\int_{\mathcal{T}} 
		\bigg(
		\frac{1}{\sqrt{n}}
		\sum_{i=1}^n
		\big\{
		g(X_i,t;\phi) + 
		\mu_\phi(t,\hat{\phi})
		\big\}
		\bigg)^2
		\,
		dM(t)
		.
		$$
		Decomposing~$g(x,t;\phi) + (\nabla_\gamma\mu_\phi(t;\phi))^T \alpha(x)$ into~$\{g(x,t;\phi) + \mu_\phi(t;\hat{\phi})\}+\{(\nabla_\gamma\mu_\phi(t;\phi))^T \alpha(x)-\mu_\phi(t;\hat{\phi})\}$, we have
		\begin{eqnarray*}
			\lefteqn{
				V_n
				=
				Y_n
				+
				\int_{\mathcal{T}} 
				\bigg(
				\frac{1}{\sqrt{n}}
				\sum_{i=1}^n
				\big\{
				(\nabla_\gamma\mu_\phi(t;\phi))^T \alpha(X_i)-\mu_\phi(t;\hat{\phi})
				\big\}
				\bigg)^2
				\,
				dM(t)
			}
			\\[2mm]
			& & 
			\hspace{1mm}
			+
			\frac{2}{n}
			\int_{\mathcal{T}} 
			\bigg(
			\sum_{i=1}^n
			\big\{
			g(X_i,t;\phi) + 
			\mu_\phi(t,\hat{\phi})
			\big\}
			\bigg)
			\bigg(
			\sum_{j=1}^n
			\big\{
			(\nabla_\gamma\mu_\phi(t;\phi))^T \alpha(X_j)-\mu_\phi(t;\hat{\phi})
			\big\}
			\bigg)
			\,
			dM(t)
			\\[2mm]
			& & 
			\hspace{1mm}
			=: Y_n+T_{n1}+2T_{n2}
			,
		\end{eqnarray*}
		say. Letting
		$$
		U_n
		:=
		\sqrt{n}
		(\hat{\phi}-\phi)
		-
		\frac{1}{\sqrt{n}}
		\sum_{i=1}^n
		\alpha(X_i)
		,
		$$
		(\ref{BahadurAssum}) and \eqref{GradFin} yield
		\begin{eqnarray*}
			T_{n1}
			&=&
			\int_{\mathcal{T}} 
			\big(
			\sqrt{n}
			\big\{
			(\nabla_\gamma\mu_\phi(t;\phi))^T (\hat{\phi}-\phi)-\mu_\phi(t;\hat{\phi})
			\big\}
			-
			(\nabla_\gamma\mu_\phi(t;\phi))^T
			U_n
			\big)^2
			dM(t)
			\\[2mm]
			&\leq &
			2n
			\int_{\mathcal{T}} 
			\big\{
			(\nabla_\gamma\mu_\phi(t;\phi))^T (\hat{\phi}-\phi)-\mu_\phi(t;\hat{\phi})
			\big\}^2
			dM(t)
			+
			2
			\|U_n\|^2
			\int_{\mathcal{T}} 
			\|\nabla_\gamma\mu_\phi(t;\phi)\|^2
			dM(t)
			\\[2mm]
			&= &
			2n\|\hat{\phi}-\phi\|^2
			\mathbbm{1}[\hat{\phi}\neq \phi]
			\int_{\mathcal{T}} 
			\frac{
				\big\{
				\mu_\phi(t;\hat{\phi})
				-
				(\hat{\phi}-\phi)^T
				\nabla_\gamma\mu_\phi(t;\phi)
				\big\}^2}{\|\hat{\phi}-\phi\|^2}
			\,
			dM(t)
			+
			o_{\rm P}(1)
			.
		\end{eqnarray*}
		Since~(\ref{intt1}) shows that 
		$$
		\ell(h)
		:=
		\int_{\mathcal{T}} 
		\frac{
			\big\{
			\mu_\phi(t;\phi+h)
			-
			h^T
			\nabla_\gamma\mu_\phi(t;\phi)
			\big\}^2}{\|h\|^2}
		\,
		dM(t)
		=
		o(1)
		$$
		as~$h$ converges to zero and~$\hat{\phi}-\phi=o_{\rm P}(1)$, we have that~$\ell(\hat{\phi}-\phi)=o_{\rm P}(1)$. Since~$\sqrt{n}(\hat{\phi}-\phi)=O_{\rm P}(1)$, we conclude that~$T_{n1}=o_{\rm P}(1)$. 
		
		Let us turn to~$T_{n2}$. The Cauchy--Schwarz inequality provides
		$$
		|T_{n2}|
		\leq
		\sqrt{Y_{n}}\sqrt{T_{n1}}
		=
		\sqrt{Y_{n}} o_{\rm P}(1)
		.
		$$
		Now,
		\begin{eqnarray*}
			Y_n
			&\leq &
			2
			\int_{\mathcal{T}} 
			\bigg(
			\frac{1}{\sqrt{n}}
			\sum_{i=1}^n
			\big\{
			g(X_i,t;\phi) 
			+ 
			(\nabla_\gamma\mu_\phi(t;\phi))^T \alpha(X_i)
			\big\}
			\bigg)^2
			dM(t)
			+
			2
			T_{n1}
			\\[2mm]
			&\leq &
			4
			\int_{\mathcal{T}} 
			\bigg\{
			\frac{1}{\sqrt{n}}
			\sum_{i=1}^n
			g(X_i,t;\phi) 
			\bigg\}^2
			dM(t)
			+
			4
			\bigg\|
			\frac{1}{\sqrt{n}}
			\sum_{i=1}^n
			\alpha(X_i)
			\bigg\|^2
			\int_{\mathcal{T}} 
			\|\nabla_\gamma\mu_\phi(t;\phi)\|^2
			\,
			dM(t)
			+
			o_{\rm P}(1)
			\\[2mm]
			&\leq &
			\frac{4}{n}
			\sum_{i,j=1}^n
			h(X_i,X_j;\phi)
			+
			O_{\rm P}(1)
			\\[2mm]
			&= &
			O_{\rm P}(1)
			,
		\end{eqnarray*}
		where we have applied (\ref{BahadurAssum}) and \eqref{GradFin} and where we used the classical consistency result for $V$-statistics (that follows from, e.g., Lemmas~5.2.1A and~5.7.3 in \citealp{Serfling1980}). It follows that~$T_{n2}=o_{\rm P}(1)$, hence that
		\begin{equation}
			\label{P1}
			V_n-Y_n=o_{\rm P}(1)
			.
		\end{equation}
		We now prove that, similarly, 
		\begin{equation}
			\label{P2}
			\hat{V}_n-Y_n=o_{\rm P}(1)
			.
		\end{equation}
		Proceeding as above, write
		\begin{eqnarray*}
			\lefteqn{
				\hat{V}_n
				=
				Y_n
				+
				\int_{\mathcal{T}} 
				\bigg(
				\frac{1}{\sqrt{n}}
				\sum_{i=1}^n
				\big\{
				g(X_i,t;\hat{\phi}) 
				-
				g(X_i,t;\phi)
				- 
				\mu_\phi(t,\hat{\phi})
				\big\}
				\bigg)^2
				\,
				dM(t)
			}
			\\[2mm]
			& & 
			\hspace{1mm}
			+
			\frac{2}{n}
			\int_{\mathcal{T}} 
			\bigg(
			\sum_{i=1}^n
			\big\{
			g(X_i,t;\phi) + 
			\mu_\phi(t,\hat{\phi})
			\big\}
			\bigg)
			\bigg(
			\sum_{j=1}^n
			\big\{
			g(X_i,t;\hat{\phi}) 
			-
			g(X_i,t;\phi)
			- 
			\mu_\phi(t,\hat{\phi})
			\big\}
			\bigg)
			\,
			dM(t)
			\\[2mm]
			& & 
			\hspace{1mm}
			=: Y_n+S_{n1}+2S_{n2}
			,
		\end{eqnarray*}
		say. Since~$|S_{n2}|\leq \sqrt{Y_{n}}\sqrt{S_{n1}}$, proving~(\ref{P2}) only requires to show that~$S_{n1}=o_{\rm P}(1)$. The fact that $M$ is a finite measure and Equation~\eqref{stochequicont} yield
		\begin{align*}
			S_{n1}
			= &
			\int_{\mathcal{T}} 
			\bigg(
			\frac{1}{\sqrt{n}}
			\sum_{i=1}^n
			\big\{
			g(X_i,t;\hat{\phi}) 
			-
			g(X_i,t;\phi)
			- 
			\mu_\phi(t,\hat{\phi})
			\big\}
			\bigg)^2 dM(t)
			\\
			\leq &
			\sup_{t \in \mathcal{T}}
			\bigg\vert
			\frac{1}{\sqrt{n}}
			\sum_{i=1}^n
			\big\{
			g(X_i,t;\hat{\phi}) 
			-
			g(X_i,t;\phi)
			- 
			\mu_\phi(t,\hat{\phi})
			\big\}
			\bigg\vert^2
			M (\mathcal{T})
			=
			o_{\rm P}(1)
		\end{align*}
		
		Obviously, (\ref{P1})--(\ref{P2}) imply that~$\hat{V}_n=V_n+o_{\rm P}(1)$. Since the asymptotic distribution of~$V_n$ follows from Theorem~6.4.1B in \cite{Serfling1980}, the result is proved.
	\end{proof}

	We can now prove Theorem~\ref{TheorAsymptDistr}.

	\begin{proof}[Proof of Theorem~\ref{TheorAsymptDistr}]
		Fix~$\phi\in\mathbb{B}^{d+1}$ and let
		$$
		\Dstat_n^\phi
		:=
		\Dstat_n^\phi(X_1,\ldots,X_n)
		:=
		\frac{1}{n} 
		\sum_{i,j = 1}^n  
		h^\phi(X_i,X_j)
		,
		$$
		based on the kernel
		$
		h^\gamma(x,y)
		:=
		\psi(\mathcal{M}_{I, -\gamma} (x),\mathcal{M}_{I, -\gamma} (y))
		$.
		Note that~$D_n=D_n^{\hat{\phi}_n}$, where~$\hat{\phi}_n$ is the sample M\"{o}bius mean (see~(\ref{TestStatistDn})). We prove the result by applying Theorem~\ref{TheorRandles}. 
		Using~(\ref{DefPsi}), the kernel~$h^\gamma$ writes
		$$
		h^\gamma(x,y)
		=
		\int_{\mathbb{S}^d\times [-1,1]} 
		g(x,u,s;\gamma)
		g(y,u,s;\gamma)
		\,
		dM(u,s)
		,
		$$
		with
		\begin{equation}
			\label{gdeff}
			g(x,u,s;\gamma)
			:=
			\mathbbm{1}[ u^T \mathcal{M}_{I,-\gamma}(x) \leq s ]-F_d(s)
		\end{equation}
		and with~$M$ the product measure defined by
		$$
		M(A\times B)
		=
		\int_{A\times B}
		dM(u,s)
		:=
		\int_{A}
		\int_{B}
		dW(F_d(s))
		d\nu_d(u)
		$$
		for all measurable sets~$A\subset \mathbb{S}^d$ and~$B\subset [-1,1]$. 
		Since~$g$ and~$M$ are bounded, (\ref{gAssum}) holds trivially.
		Moreover, Proposition~\ref{PropBahadur} directly entails that~$\hat{\phi}_n$ satisfies~(\ref{BahadurAssum}) with
		$$
		\alpha(x)
		:=
		- W_{\phi}^{-1} 
		\mathcal{M}_{I, - \phi}(x)
		.
		$$
		Note that the quantity~$\mu_\phi(u,s;\gamma)$ in~(\ref{Defmu}) writes
		$$
		\mu_\phi(u,s;\gamma)
		=
		{\rm E}_\phi[g(X,u,s;\gamma)]
		=
		{\rm E}[g(\mathcal{M}_{I,\phi}(Z),u,s;\gamma)]
		,
		$$
		where~$Z\sim \mathrm{Unif} (\mathbb{S}^d)$ and~${\rm E}_\phi$ is the expectation under~$X\sim \mathrm{SC} (\phi)$. Thus,
		\begin{equation}
			\label{exprpremu}
			\mu_\phi(u,s;\gamma)
			=
			\int_{\mathbb{S}^d}
			\big\{
			\mathbbm{1}[ u^T \mathcal{M}_{I,-\gamma}(\mathcal{M}_{I,\phi}(v)) \leq s ]-F_d(s)
			\big\}
			\,
			d\nu_d(v)
			,
		\end{equation}
		and~$\mu_\phi(u,s;\phi)=0$. It thus remains to check Conditions~(\ref{GradFin})--(\ref{stochequicont}).
		\vspace{3mm}
		
		{\sc Conditions~(\ref{GradFin})--(\ref{intt1}).}
		For any~$\gamma\in \mathbb{B}^{d+1}$, the map~$v\mapsto T_\gamma(v):=\mathcal{M}_{I,-\gamma}(\mathcal{M}_{I,\phi}(v))$ is a diffeomorphism of~$\mathbb{S}^d$, so that 
		\begin{equation}
			\label{exprmu}
			\mu_\phi(u,s;\gamma)
			=
			-F_d(s)
			+
			\int_{C_{u,s}}
			J_\gamma(w)
			\,
			d\nu_d(w)
			,
		\end{equation}
		where~$C_{u,s}:=\{w\in\mathbb{S}^d:u^Tw\leq s\}$ is a fixed spherical cap and~$J_\gamma(w):={\rm Jac}_{\mathbb S^d}T_\gamma^{-1}(w)$ is \label{pageJomega} the surface area Jacobian of~$T_\gamma^{-1}$ on~$\mathbb{S}^d$. Note that the collection of diffeomorphisms
		$$
		\{T_\gamma:\gamma\in \mathbb{B}^{d+1}\}
		$$
		is smooth. With a closed ball~$B:=\{\gamma\in \mathbb{B}^{d+1}:\|\gamma-\phi\|\leq \delta\}$ small enough so that~$B\subset \mathbb{B}^{d+1}$, the map~$(w,\gamma)\mapsto J_\gamma(w)$ is then continuously differentiable in~$\mathbb{S}^d\times B$. From a routine domination argument (using compactness and smoothness), it follows from~(\ref{exprmu}) that~(i) $\gamma\mapsto \mu_\phi(u,s;\gamma)$ is continuously differentiable on~$B$ for any~$u\in \mathbb{S}^d$ and~$s\in [-1,1]$, and that~(ii) 
		$$
		\sup_{u\in \mathbb{S}^d}
		\sup_{s\in [-1,1]}
		\sup_{\gamma\in B}
		\| \nabla_\gamma \mu_\phi(u,s;\gamma) \|
		<
		\infty
		.
		$$
		Since the measure~$M$ is finite, Condition~(\ref{GradFin}) holds. Using dominated convergence again, we also obtain that the map
		$$
		\theta\mapsto
		L(\theta)
		:=
		\int_{\mathbb{S}^d}
		\int_{-1}^1
		\nabla_\gamma \mu_\phi(u,s;\theta)(\nabla_\gamma \mu_\phi(u,s;\theta))^T
		\,
		dW(F_d(s))
		d\nu_d(u)
		,
		$$
		is thus continuous at~$\phi$.
		
		We now check Condition~\eqref{intt1}. Since~$\mu_\phi(u,s;\phi)=0$, we need to show that 
		\begin{equation}
			\label{intt}
			\int_{\mathbb{S}^d}
			\int_{-1}^1
			\bigg\{
			\frac{\mu_\phi(u,s;\gamma)-\mu_\phi(u,s;\phi)}{\|\gamma-\phi\|}
			-
			\frac{(\gamma-\phi)^T}{\|\gamma-\phi\|}
			\nabla_\gamma\mu_\phi(u,s;\phi)
			\bigg\}^2
			dW(F_d(s))
			d\nu_d(u)
			\to 0
		\end{equation}
		as~$\gamma$ converges to~$\phi$. We prove this by establishing that the convergence in~(\ref{intt}) holds along any sequence~$(\gamma_n)$ converging to~$\phi$ with~$(\gamma_n-\phi)/\|\gamma_n-\phi\|\to \xi$ for some~$\xi\in\mathbb{S}^d$ (it is easy to check that the compactness of~$\mathbb{S}^d$ implies that it is enough to consider such sequences). From~(i) above, we have that for any~$u,s$,
		$$
		\frac{\mu_\phi(u,s;\gamma_n)-\mu_\phi(u,s;\phi)}{\|\gamma_n-\phi\|}
		-
		\frac{(\gamma_n-\phi)^T}{\|\gamma_n-\phi\|}
		\nabla_\gamma\mu_\phi(u,s;\phi)
		\to 0
		,
		$$
		hence
		\begin{equation}
			\label{intn}
			\frac{\mu_\phi(u,s;\gamma_n)-\mu_\phi(u,s;\phi)}{\|\gamma_n-\phi\|}
			-
			\xi^T \nabla_\gamma\mu_\phi(u,s;\phi)
			\to 0
			.
		\end{equation}
		Now, (i) also entails that
		$$
		\mu_\phi(u,s;\gamma_n)-\mu_\phi(u,s;\phi)
		=
		\int_0^1
		(\gamma_n-\phi)^T
		\nabla_\gamma\mu_\phi(u,s;\phi+\lambda(\gamma_n-\phi))
		\,
		d\lambda
		.
		$$
		Thus,
		\begin{eqnarray*}
			\lefteqn{
				\hspace{-2mm}
				\int_{\mathbb{S}^d}
				\int_{-1}^1
				\bigg\{
				\frac{\mu_\phi(u,s;\gamma_n)-\mu_\phi(u,s;\phi)}{\|\gamma_n-\phi\|}
				\bigg\}^2
				dW(F_d(s))
				d\nu_d(u)
			}
			\\[2mm]
			& &
			\hspace{-1mm}
			=
			\frac{1}{\|\gamma_n-\phi\|^2}
			\int_{\mathbb{S}^d}
			\int_{-1}^1
			\bigg\{
			\int_0^1
			(\gamma_n-\phi)^T
			\nabla_\gamma\mu(t;\phi+\lambda(\gamma_n-\phi))
			\,
			d\lambda
			\bigg\}^2
			dW(F_d(s))
			d\nu_d(u)
			\\[2mm]
			& &
			\hspace{-1mm}
			\leq
			\frac{1}{\|\gamma_n-\phi\|^2}
			\int_{\mathbb{S}^d}
			\int_{-1}^1
			\int_0^1
			\big\{
			(\gamma_n-\phi)^T
			\nabla_\gamma\mu(t;\phi+\lambda(\gamma_n-\phi))
			\big\}^2
			\,
			d\lambda
			\,
			dW(F_d(s))
			d\nu_d(u)
			,
		\end{eqnarray*}
		so that Fubini's theorem and the Lebesgue Dominated Convergence Theorem provide
		\begin{eqnarray*}
			\lefteqn{
				\hspace{-6mm}
				\int_{\mathbb{S}^d}
				\int_{-1}^1
				\bigg\{
				\frac{\mu_\phi(u,s;\gamma_n)-\mu_\phi(u,s;\phi)}{\|\gamma_n-\phi\|}
				\bigg\}^2
				dW(F_d(s))
				d\nu_d(u)
			}
			\\[2mm]
			& &
			\hspace{3mm}
			\leq
			\frac{1}{\|\gamma_n-\phi\|^2}
			(\gamma_n-\phi)^T
			\bigg\{\int_0^1
			L(\phi+\lambda(\gamma_n-\phi))
			\,d\lambda\bigg\}
			(\gamma_n-\phi)
			\\[2mm]
			& &
			\hspace{3mm}
			=
			\xi^T
			L(\phi)
			\xi
			+
			o(1)
			=
			\int_{\mathbb{S}^d}
			\int_{-1}^1
			\{ \xi^T \nabla_\gamma\mu(u,s;\phi)\}^2
			\,
			dW(F_d(s))
			d\nu_d(u)
			+
			o(1)
			.
		\end{eqnarray*}
		Remembering~(\ref{intn}), it then follows from Proposition~2.29 in~\cite{vanderVaart1998AsymptoticStatistics} that
		$$
		\int_{\mathbb{S}^d}
		\int_{-1}^1
		\bigg\{
		\frac{\mu_\phi(u,s;\gamma_n)-\mu_\phi(u,s;\phi)}{\|\gamma_n-\phi\|}
		-
		\xi^T \nabla_\gamma\mu_\phi(u,s;\phi)
		\bigg\}^2
		dW(F_d(s))
		d\nu_d(u)
		\to 0
		,
		$$
		which in turn entails that
		$$
		\int_{\mathbb{S}^d}
		\int_{-1}^1
		\bigg\{
		\frac{\mu_\phi(u,s;\gamma_n)-\mu_\phi(u,s;\phi)}{\|\gamma_n-\phi\|}
		-
		\frac{(\gamma_n-\phi)^T}{\|\gamma_n-\phi\|}
		\nabla_\gamma\mu_\phi(u,s;\phi)
		\bigg\}^2
		dW(F_d(s))
		d\nu_d(u)
		\to 0
		.
		$$
		This establishes~(\ref{intt}) and, therefore, Condition~\eqref{intt1} holds.
		\vspace{3mm}

		{\sc Condition~(\ref{stochequicont}).}
		For any~$(u,s,\gamma)
		\in  \mathbb{S}^d\times[-1,1]\times \mathbb{B}^{d+1}$, the map
		$$
		x
		\mapsto \|x-\gamma\|^2 (u^T \mathcal{M}_{I,-\gamma}(x)-s)
		$$ 
		is the restriction to~$\mathbb{S}^d$ of a polynomial function on~$\R^{d+1}$ with degree at most~$m_d$ for some positive integer~$m_d$. Since the collection of such polynomials is a finite-dimensional vector space, Lemma~2.6.16 from \citetalias{Vaart2023}   implies that
		$$
		\{
		x\mapsto \|x-\gamma\|^2 (u^T \mathcal{M}_{I,-\gamma}(x)-s)
		:
		(u,s,\gamma)
		\in 
		\mathbb{S}^d\times[-1,1]\times \mathbb{B}^{d+1}
		\}
		$$
		is a VC-subgraph class. Therefore, applying Lemma~2.6.20(viii) from \citetalias{Vaart2023} with the fixed monotone function~$r\mapsto \mathbbm{1}[r\leq 0]$ directly entails that
		$$
		\mathcal{F}
		:=
		\{
		x\mapsto 
		f(\cdot,u,s;\gamma)
		:=
		\mathbbm{1}[u^T \mathcal{M}_{I,-\gamma}(x)-s\leq 0]
		:
		(u,s,\gamma)
		\in 
		\mathbb{S}^d\times[-1,1]\times \mathbb{B}^{d+1}
		\}
		$$
		is a VC-subgraph class, too (note that~$\|x-\gamma\|>0$ for~$x\in\mathbb{S}^d$ and~$\gamma\in \mathbb{B}^{d+1}$). As usual, Theorems~2.6.7 and~2.5.2 from \citetalias{Vaart2023} then entail that~$\mathcal F$ is universally Donsker. Note that, using standard notation from empirical process theory, we have
		$$
		\sqrt n({\rm P}_n-{\rm P}_\phi)
		\big\{
		f(\cdot,u,s;\hat{\phi}_n)
		-
		f(\cdot,u,s;\phi) 
		\big\}
		=
		\frac{1}{\sqrt{n}}
		\sum_{i=1}^n
		\big\{
		g(X_i, u, s;\hat{\phi}) 
		-
		g(X_i, u, s;\phi)
		- 
		\mu_\phi(u, s,\hat{\phi})
		\big\}
		,
		$$
		so we only need to show that
		\begin{equation}
			\label{eq:REP}
			\sup_{u \in \mathbb{S}^d, s \in [-1,1]}
			\big\vert
			\sqrt n({\rm P}_n-{\rm P}_\phi)
			\big\{ 
			f(\cdot,u,s;\hat{\phi}_n)
			-
			f(\cdot,u,s;\phi) 
			\big\}
			\big\vert
			=
			o_{\rm P} (1).
		\end{equation}
		We are going to show that~\eqref{eq:REP} holds relying on Theorem~3.13.4 from~\citetalias{Vaart2023}. 
		
		To do so, fix $u\in\mathbb{S}^d$, $s\in[-1,1]$, and~$\delta \in \big(0,  (1 - \| \phi \|)/2 \big)$. Note that, for any~$\gamma\in B_{\delta}(\phi)$,
		\begin{eqnarray*}
			\lefteqn{
				{\rm P}_\phi \big\lbrace
				f(\cdot,u,s;\gamma)-f(\cdot,u,s;\phi)
				\big\rbrace^2
			}
			\\[2mm]
			& & 
			\hspace{7mm}
			=
			\int_{\mathbb{S}^d}
			\big\{
			f(\mathcal{M}_{I,\phi}(v),u,s;\gamma)
			-
			f(\mathcal{M}_{I,\phi}(v),u,s;\phi)
			\big\}^2
			\,
			d\nu_d(v)
			,
		\end{eqnarray*}
		where
		\begin{eqnarray*}
			\lefteqn{
				\hspace{-3mm}
				|
				f(\mathcal{M}_{I,\phi}(v),u,s;\gamma)
				-
				f(\mathcal{M}_{I,\phi}(v),u,s;\phi)
				|
			}
			\\[2mm]
			& & 
			\hspace{3mm}
			=
			|
			\mathbbm{1}[ u^T \mathcal{M}_{I,-\gamma}(\mathcal{M}_{I,\phi}(v)) \leq s ]
			-
			\mathbbm{1}[ u^T v \leq s ]
			|
			\\[2mm]
			& & 
			\hspace{3mm}
			=
			|
			\mathbbm{1}[ u^T T_{\gamma}(v) \leq s ]
			-
			\mathbbm{1}[ u^T v \leq s ]
			|
			\\[2mm]
			& & 
			\hspace{3mm}
			=
			\mathbbm{1}[ u^T T_{\gamma}(v) \leq s , u^T v > s 
			]
			+
			\mathbbm{1}[ u^T T_{\gamma}(v) > s , u^T v \leq s 
			]
			.
		\end{eqnarray*}
		As above, $(u, v,\gamma)\mapsto h(u, v,\gamma):= u^T T_{\gamma}(v)$ is continuously differentiable on~$\mathbb{S}^d \times \mathbb{S}^d \times B_{\delta}(\phi)$, so that 
		$$
		C
		:=
		\sup_{u \in \mathbb{S}^d}
		\sup_{v\in\mathbb{S}^d}
		\sup_{\gamma \in B_{\delta}(\phi)}
		\| \nabla_\gamma h(u,v,\gamma ) \|
		<
		\infty
		.
		$$
		It follows that, for any~$u,v\in\mathbb{S}^d$, 
		$$
		\sup_{\gamma \in B_{\delta}(\phi)}
		| u^T T_{\gamma}(v) - u^T v |
		=
		\sup_{\gamma \in B_{\delta}(\phi)}
		| u^T T_{\gamma}(v) - u^T T_{\phi}(v) |
		<
		C\delta
		.
		$$
		Thus, if~$u^T v>s$ and if there exists some~$\gamma \in B_{\delta} (\phi)$ with~$u^T T_{\gamma}(v)\leq s$, then we must have~$s<u^T v\leq s+C\delta$, that is,
		$$
		\sup_{\gamma \in B_{\delta}(\phi)}
		\mathbbm{1}[ u^T T_{\gamma}(v) \leq s , u^T v > s 
		]
		\leq
		\mathbbm{1}[ s<u^T v \leq s+C\delta]
		.
		$$
		Similarly, if~$u^T v\leq s$ and if there exists~$\gamma \in B_{\delta}(\phi)$ such that~$u^T T_{\gamma}(v)> s$, then we must have~$s-C\delta<u^T v \leq s$, that is,
		$$
		\sup_{\gamma \in B_{\delta}(\phi)}
		\mathbbm{1}[ u^T T_{\gamma}(v) > s , u^T v \leq s 
		]
		\leq
		\mathbbm{1}[ s-C\delta<u^T v \leq s]
		.
		$$
		Consequently,
		\begin{equation}
			\sup_{\gamma\in B_{\delta}(\phi)}
			|
			f(\mathcal{M}_{I,\phi}(v),u,s;\gamma)
			-
			f(\mathcal{M}_{I,\phi}(v),u,s;\phi)
			|
			\leq
			\mathbbm{1}[ s-C\delta<u^T v \leq s+C\delta]
			,
			\label{keybound}
		\end{equation}
		which yields
		\begin{eqnarray*}
			\sup_{\gamma \in B_{\delta}(\phi)}
			{\rm P}_\phi \big\lbrace
			f(\cdot,u,s;\gamma)-f(\cdot,u,s;\phi)
			\big\rbrace^2
			&\leq  &
			\int_{\mathbb{S}^d}
			\mathbbm{1}[ s-C\delta<u^T v\leq s+C\delta]
			\,
			d\nu_d(v)
			\\[2mm]
			&=  &
			F_d( \min(s+C\delta,1)) -F_d( \max(s-C\delta,-1) )
			.
		\end{eqnarray*}
		Since~$F_d$ is continuous, it is uniformly continuous on~$[-1,1]$. So, for any $\varepsilon > 0$, there exists $\delta_{\varepsilon} \in \big(0,  (1 - \| \phi \|)/2 \big)$ such that
		$$
		\sup_{\gamma \in B_{\delta_{\varepsilon}}(\phi)}
		\bigg(
		\sup_{u \in \mathbb{S}^d, s \in [-1, 1]}
		{\rm P}_\phi \big\lbrace
		f(\cdot,u,s;\gamma)-f(\cdot,u,s;\phi)
		\big\rbrace^2
		\bigg)
		\leq
		\varepsilon.
		$$ 
		This implies that%
		$$
		{\rm P}
		\bigg[
		\sup_{u \in \mathbb{S}^d, s \in [-1, 1]}
		{\rm P}_\phi \big\lbrace
		f(\cdot,u,s;\hat{\phi}_n)-f(\cdot,u,s;\phi)
		\big\rbrace^2
		> \varepsilon
		\bigg]
		\leq
		{\rm P}
		[
		\| \hat{\phi}_n - \phi \| > \delta_{\varepsilon}
		]
		\to
		0.
		$$
		Since this holds for any $\varepsilon > 0$, it follows that
		$$
		\sup_{u \in \mathbb{S}^d, s \in [-1, 1]}
		{\rm P}_\phi \big\lbrace
		f(\cdot,u,s;\hat{\phi}_n)-f(\cdot,u,s;\phi)
		\big\rbrace^2
		=
		o_{\rm P} (1).
		$$
		Therefore, Theorem~3.13.4 from~\citetalias{Vaart2023} ensures that \eqref{eq:REP} holds, which in turn implies that Condition~(\ref{stochequicont}) holds. The result thus follows from Theorem~\ref{TheorRandles}.
		\vspace{3mm}
	\end{proof}

	\begin{proof}[Proof of Corollary~\ref{CorolNull}]
		We first show that the kernel~$h_*^0(x,y)$  in Theorem~\ref{TheorAsymptDistr} admits the expression in~(\ref{Exprhzerostar}). First note that, from~(\ref{exprpremu}), we have, using the notation~$f(x;\phi)$ from~(\ref{eq:densCauchy}),
		\begin{eqnarray*}
			\mu_0(u,s;\gamma)
			&=&
			\int_{\mathbb{S}^d}
			\big\{
			\mathbbm{1}[ u^T \mathcal{M}_{I,-\gamma}(v) \leq s ]-F_d(s)
			\big\}
			\,
			d\nu_d(v)
			\\[2mm]
			&=&
			\int_{\mathbb{S}^d}
			\big\{
			\mathbbm{1}[ u^T w \leq s ]-F_d(s)
			\big\}
			\,
			f(w;-\gamma) 
			\,
			d\sigma_d(w)
			,
		\end{eqnarray*}
		since~$\mathcal{M}_{I,-\gamma}(V)\sim {\rm SC}(-\gamma)$ when~$V\sim {\rm Unif}(\mathbb{S}^{d})$; see~(\ref{eq:CauchyUnif}). It follows that 
		\begin{eqnarray*}
			\lefteqn{
				\nabla_\gamma \mu_0(u,s;0)
				=
				\frac{\Gamma\big( \frac{d + 1}{2} \big)}{2 \pi^{(d + 1)/2}}
				\int_{\mathbb{S}^d}
				\mathbbm{1}[ u^T w \leq s ]
				\nabla_\gamma
				\bigg( \frac{1 - \|\gamma \|^2}{\|w + \gamma\|^2} \bigg)^d \Big|_{\gamma=0}
				\,
				d\sigma_d(w)
			}
			\\[2mm]
			& &
			\hspace{3mm}
			=
			-\frac{d\Gamma\big( \frac{d + 1}{2} \big)}{\pi^{(d + 1)/2}}
			\int_{\mathbb{S}^d}
			\mathbbm{1}[ u^T w \leq s ]
			w
			\,
			d\sigma_d(w)
			=
			-2d
			\bigg(
			\int_{\mathbb{S}^d}
			\mathbbm{1}[ u^T w \leq s ]
			u^T w
			\,
			d\nu_d(w)
			\bigg)
			u
			.
		\end{eqnarray*}
		Using~(\ref{FdExpr}), this yields
		$$
		\nabla_\gamma \mu_0(u,s;0)
		=
		-2d
		\bigg(
		\frac{\Gamma(\frac{d+1}{2})}{\sqrt{\pi}\Gamma(\frac{d}{2})}
		\int_{-1}^s
		t (1-t^2)^{(d-2)/2}
		\,
		dt
		\bigg)
		u
		=
		\frac{2 \Gamma(\frac{d+1}{2})}{\sqrt{\pi}\Gamma(\frac{d}{2})}
		(1-s^2)^{d/2}
		u
		,
		$$
		so that~$h_*^0(x,y)$ indeed rewrites as in~(\ref{Exprhzerostar}). 
		
		Since this expression entails that $h_*^0(Rx,Ry)=h_*^0(x,y)$ for all rotations~$R$, the operator~$A$ from Theorem~\ref{TheorAsymptDistr} commutes with the $SO(d+1)$-action on~$L^2(\mathbb S^d,\nu_d)$. 
		Thus, this operator acts as a scalar, $\eta_\ell$ say, on each spherical-harmonic subspace~$\mathcal H_\ell$ of degree~$\ell$ ($\eta_\ell$ is an eigenvalue of~$A$ with multiplicity~$\dim\mathcal H_\ell$).
		We now determine the corresponding eigenvalues. Since Fubini's theorem readily yields
		$$
		(A1)(x)
		=
		\int_{\mathbb{S}^d}
		h_*^0(x,y)
		\,
		d\nu_d(y)
		=
		0
		,
		$$
		constants have eigenvalue~$\eta_0=0$, with multiplicity~$\dim\mathcal H_0=1$. For any degree-one spherical harmonic~$q_v(x)=v^Tx$, Fubini provides
		\begin{eqnarray*}
			\lefteqn{
				(Aq_v)(x)
				=
				\int_{\mathbb{S}^d}
				h_*^0(x,y)
				v^Ty
				\,
				d\nu_d(y)
			}
			\\[2mm]
			& & 
			\hspace{-1mm}
			=
			\int_{\mathbb{S}^d}
			\int_{-1}^1
			\bigg\{
			\mathbbm{1}[ u^T x \leq s ]-F_d(s)
			+ 
			\frac{(d+1) \Gamma(\frac{d+1}{2})}{\sqrt{\pi}d\Gamma(\frac{d}{2})}
			(1-s^2)^{d/2}
			u^T x
			\bigg\}
			C_v(u,s)
			\,
			dW(F_d(s))
			d\nu_d(u)
			,
		\end{eqnarray*}
		where
		\begin{eqnarray*}
			C_v(u,s)
			&:=&
			\int_{\mathbb{S}^d}
			\bigg\{
			\mathbbm{1}[ u^T y \leq s ]-F_d(s)
			+ 
			\frac{(d+1) \Gamma(\frac{d+1}{2})}{\sqrt{\pi}d\Gamma(\frac{d}{2})}
			(1-s^2)^{d/2}
			u^T y
			\bigg\}
			v^T y
			\,
			d\nu_d(y)
			\\[2mm]
			&=&
			\int_{\mathbb{S}^d}
			\mathbbm{1}[ u^T y \leq s ]
			v^T y
			\,
			d\nu_d(y)
			+ 
			\frac{\Gamma(\frac{d+1}{2})}{\sqrt{\pi}d\Gamma(\frac{d}{2})}
			(1-s^2)^{d/2}
			u^T v
			\\[2mm]
			&=&
			\bigg(
			-
			\frac{\Gamma(\frac{d+1}{2})}{\sqrt{\pi}d\Gamma(\frac{d}{2})}
			(1-s^2)^{d/2}
			+ 
			\frac{\Gamma(\frac{d+1}{2})}{\sqrt{\pi}d\Gamma(\frac{d}{2})}
			(1-s^2)^{d/2}
			\bigg)
			u^T v
			=
			0
		\end{eqnarray*}
		Thus,
		$
		Aq_v
		=
		0
		$, so that degree-one spherical harmonics have eigenvalue~$\eta_1=0$,
		with multiplicity~$\dim\mathcal H_1=d+1$ (more precisely, the common eigenvalue~$\eta_0=\eta_1=0$ has multiplicity~$d+2$). Since the correction from~$\psi$ to~$h_*^0$ only affects the degree-one component, the remaining degree-wise eigenvalues are the same as for the kernel~$\psi$, that is,
		$\eta_\ell=\eta_\ell^\psi$ for~$\ell\geq2$.  This proves the result. 
	\end{proof}

	\section{Proofs for Section~\ref{secNonNull}}
	\label{AppsecNonNull}

	\begin{proof}[Proof of Theorem~\ref{TheorConsistency}]
		Since~$\phi$ is the M\"{o}bius mean of ${\rm P}$, it follows that
		\begin{equation}
			\label{pree}
			\E[ \mathcal{M}_{I, -\phi} (X_1)] = 0
			.
		\end{equation}
		Denote then as~${\rm P}^Z$ the common distribution of~$Z_i=\mathcal{M}_{I, -\phi}(X_i)$, $i=1,\ldots,n$; since~$\mathcal{M}_{I, -\phi}$ is a diffeomorphism of~$\mathbb{S}^d$, the probability measure~${\rm P}^Z$ is non-atomic too. Equation~(\ref{pree}) reads~$\E[Z_1]=\E[ \mathcal{M}_{I,0} (Z_1)]=0$,  so that Proposition~\ref{prop:Mobunique} yields that~${\rm P}^Z$ has M\"{o}bius mean zero. %
		The invariance result in Theorem~\ref{th:invariance} then entails that it is sufficient to establish the result for~$\phi=0$.
		
		Assume thus that~$X_1, \ldots, X_n$ is a random sample from~$\rm P$, where~${\rm P}$  is non-atomic and has M\"{o}bius mean zero. First note that the classical consistency result for~$V$-statistics ensures that 
		\begin{equation}
			\frac{1}{n}
			\Dstat_{n}^0
			:=
			\frac{1}{n^2}
			\sum_{i,j=1}^n
			\psi(X_i,X_j) 
			\stackrel{\rm P}{\to}
			e_{\rm P}
			> 0
			.
			\label{pree1}
		\end{equation}
		Now, assume for a moment that
		\begin{equation}
			\label{ToshowConsitency}
			\frac{1}{n}
			(\Dstat_{n} - \Dstat_{n}^0)
			=
			\frac{1}{n^2}
			\sum_{i,j=1}^n
			\big\{ 
			\psi\big(\mathcal{M}_{I, -\hat{\phi}_n} (X_i),\mathcal{M}_{I, -\hat{\phi}_n} (X_j)\big)
			-
			\psi(X_i,X_j) 
			\big\}
			= 
			o_{\rm P} (1)
			.
		\end{equation}
		Then, (\ref{pree1})--(\ref{ToshowConsitency}) entail that
		$
		\frac{1}{n}
		\mathcal{D}_{n}
		\stackrel{\rm P}{\to}
		e_{\rm P}
		$, which readily implies that~$
		{\rm P}[\mathcal{D}_{n} > c]
		\to
		1$ for any~$c>0$. Thus, it only remains to establish~(\ref{ToshowConsitency}).
		
		To do so, we use a similar decomposition as in the proof of Theorem~\ref{TheorRandles}: with the function~$g$ defined in~(\ref{gdeff}), write
		\begin{eqnarray*}
			\lefteqn{
				\frac{1}{n}
				( \Dstat_{n} - \Dstat_{n}^0 )
				=
				\int
				\bigg\lbrace
				\bigg(
				\frac{1}{n}
				\sum_{i=1}^n
				g(X_i,t; \hat{\phi}_n)
				\bigg)^2
				-
				\bigg(
				\frac{1}{n}
				\sum_{i=1}^n
				g(X_i,t; 0)
				\bigg)^2
				\bigg\rbrace
				\,
				dM (t)
			}
			\\[2mm]
			& & 
			\hspace{6mm}
			=
			\int
			\bigg(
			\frac{1}{n}
			\sum_{i=1}^n
			\big\{
			g(X_i,t; \hat{\phi}_n)
			+
			g(X_i,t; 0)
			\big\}
			\bigg)
			\bigg(
			\frac{1}{n}
			\sum_{i=1}^n
			\big\{
			g(X_i,t; \hat{\phi}_n)
			-
			g(X_i,t; 0)
			\big\}
			\bigg)
			\,
			dM (t)
			\\[2mm]
			& & 
			\hspace{6mm}
			=
			\int
			\bigg(
			\frac{1}{n}
			\sum_{i=1}^n
			\big\{
			g(X_i,t; \hat{\phi}_n)
			-
			g(X_i,t; 0)
			\big\}
			\bigg)^2
			dM (t)
			\\[2mm]
			& & 
			\hspace{12mm}
			+
			2
			\int
			\bigg(
			\frac{1}{n}
			\sum_{i=1}^n
			g(X_i,t; 0)
			\bigg)
			\bigg(
			\frac{1}{n}
			\sum_{i=1}^n
			\big\{
			g(X_i,t; \hat{\phi}_n)
			-
			g(X_i,t; 0)
			\big\}
			\bigg)
			\,
			dM (t)
			\\[2mm]
			& & 
			\hspace{6mm}
			=:
			A_n + 2B_n
			,
		\end{eqnarray*}
		say; throughout this proof, integrals with respect to~$t=(s,u)$ are over~$[-1,1]\times \mathbb{S}^d$ and the measure~$M$ is as in the proof of Theorem~\ref{TheorAsymptDistr}.
		Using again the consistency result for~$V$-statistics, we have
		$$
		\int
		\bigg(
		\frac{1}{n}
		\sum_{i=1}^n
		g(X_i,t; 0)
		\bigg)^2
		dM(t)
		=
		\frac{1}{n^2}
		\sum_{i,j=1}^n
		\psi ( X_i, X_j )
		=
		{\rm E}[\psi ( X_1, X_2 )]
		+
		o_{\rm P} (1)
		,
		$$
		so that the Cauchy–Schwarz inequality yields
		\begin{eqnarray*}
			B_n^2
			& \leq &
			\bigg(
			\int
			\bigg(
			\frac{1}{n}
			\sum_{i=1}^n
			g(X_i,t; 0)
			\bigg)^2
			dM(t)
			\bigg)
			\bigg(
			\int
			\bigg(
			\frac{1}{n}
			\sum_{i=1}^n
			\big\{
			g(X_i,t; \hat{\phi}_n)
			-
			g(X_i,t; 0)
			\big\}
			\bigg)^2
			dM (t)
			\bigg)
			\\[2mm]
			& = &
			O_{\rm P} (1) A_n
			.
		\end{eqnarray*}
		As a consequence, it only remains to prove that~$A_n = o_{\rm P} (1)$.
		
		If $\| \hat{\phi}_n \| \leq n^{-1/4}$, then Jensen's inequality provides
		\begin{eqnarray*}
			A_n
			& \leq &
			\sup_{\| \phi \| \leq n^{-1/4}}
			\int
			\bigg(
			\frac{1}{n}
			\sum_{i=1}^n 
			\vert
			g(X_i,t; \phi)
			-
			g(X_i,t; 0)
			\vert
			\bigg)^2
			dM (t)
			\\[2mm]
			& \leq &
			\int
			\bigg(
			\frac{1}{n}
			\sum_{i=1}^n 
			\sup_{\| \phi \| \leq n^{-1/4}}
			\vert
			g(X_i,t; \phi)
			-
			g(X_i,t; 0)
			\vert
			\bigg)^2
			dM (t)
			\\[2mm]
			& \leq &
			\frac{1}{n}
			\sum_{i=1}^n 
			\int
			\sup_{\| \phi \| \leq n^{-1/4}}
			\vert
			g(X_i,t; \phi)
			-
			g(X_i,t; 0)
			\vert^2
			\,
			dM (t)
			\\[2mm]
			& =: & \tilde{A}_n
			,
		\end{eqnarray*}
		say. Using~\eqref{keybound}, it follows that, for a constant~$C > 0$,
		\begin{eqnarray*}
			\tilde{A}_n
			& \leq &
			\frac{1}{n}
			\sum_{i=1}^n 
			\int_{\mathbb{S}^d}
			\int_{-1}^{1}
			\mathbbm{1}[ s-Cn^{-1/4}<u^T X_i \leq s+Cn^{-1/4}]
			\,
			dW(F_d(s))
			d\nu_d(u)
			\\
			& = &
			\int_{-1}^{1}
			\big\{
			F_{d} (s+Cn^{-1/4}) - F_{d} (s-Cn^{-1/4})
			\big\}
			\,
			dW(F_d(s)).
		\end{eqnarray*}
		Since $F_d$ is an absolutely continuous function and $W$ is a bounded measure, the dominated convergence theorem guarantees that~$\tilde{A}_n=o_{\rm P}(1)$. For any~$\varepsilon > 0$, we thus have
		\begin{eqnarray*}
			{\rm P} [A_n > \varepsilon]
			& = &
			{\rm P} [A_n > \varepsilon, \| \hat{\phi}_n \| \leq n^{-1/4}]
			+
			{\rm P} [A_n > \varepsilon, \| \hat{\phi}_n \| > n^{-1/4}]
			\\[1mm]
			& \leq &
			{\rm P} [\tilde{A}_n > \varepsilon]
			+
			{\rm P} [\| \hat{\phi}_n \| > n^{-1/4}]
			\to 0
			,
		\end{eqnarray*}
		where we used that $\| \hat{\phi}_n \| = O_{\rm P} (1/\sqrt{n})$ by Proposition~\ref{PropBahadur}. Therefore, $A_n = o_{\rm P} (1)$ and the result follows.
	\end{proof}
	
	\begin{proof}[Proof of Corollary~\ref{CorConsistency}]
		
		Let~$\rm P$ be an absolutely continuous probability measure on~$\mathbb{S}^d$ with density in $L^2 (\nu_d)$ that is not spherical Cauchy, and let $X \sim \rm P$. Denote by $\phi$ the M\"{o}bius mean of $\rm P$ (which exists and is unique by Proposition~\ref{prop:Mobunique}). By Theorem~\ref{TheorConsistency}, we just need to prove that
		\begin{equation*}
			e_{\rm P}
			=
			\int_{\mathbb{S}^d}
			\int_{-1}^1
			\{{\rm P}[ u^T \mathcal{M}_{I, -\phi} (X) \leq s ]-F_d(s)\}^2
			\,
			dW(F_d(s))
			d\nu_d(u) > 0.
		\end{equation*}
		
		Denote then as~${\rm P}^Z$ the distribution of~$Z=\mathcal{M}_{I, -\phi}(X)$. Since $\rm P$ is not spherical Cauchy, ${\rm P}^Z$ is not uniform. Moreover, as~$\mathcal{M}_{I, -\phi}$ is a diffeomorphism of~$\mathbb{S}^d$, the probability measure~${\rm P}^Z$ has a density, say $f^{Z}$, that belongs to $L^2 (\nu_d)$ too. 
		
		For each~$k\geq1$, choose an eigenfunction~$\Psi_k$ of~$A_\psi$
		associated with~$\lambda_k^\psi$ so that
		$\{\Psi_k\}_{k\geq1}$ forms an orthonormal basis of the orthogonal
		complement of the constant functions in~$L^2(\nu_d)$. The
		Hilbert--Schmidt expansion of~$\psi$ is then
		\begin{equation*}
			\psi(x,y)
			=
			\sum_{k=1}^{\infty}
			\lambda_k^\psi\Psi_k(x)\Psi_k(y)
			\qquad\text{in }L^2(\nu_d\otimes\nu_d)
		\end{equation*}
		and 
		the
		Fourier expansion of~$f^Z$ may be written as
		\begin{equation*}
			f^Z(x)
			=
			1+\sum_{k=1}^{\infty}\alpha_k\Psi_k(x)
			\qquad\text{in }L^2(\nu_d),
		\end{equation*}
		where
		$$
		\alpha_k
		=
		\int_{\mathbb S^d}
		f^Z(x)\Psi_k(x)\,d\nu_d(x).
		$$
		
		Substituting these expansions into the expression
		\begin{equation*}
			e_{\rm P} = \int_{\mathbb{S}^d \times \mathbb{S}^d} \psi (x, y) f^{Z} (x) f^{Z} (y) \,  d\nu_d (x)d\nu_d (y) 
		\end{equation*}
		and using orthonormality yields
		\begin{equation*}
			e_{\rm P}
			= 
			\sum_{k=1}^{\infty}
			\lambda_k^{\psi} \alpha_k^2.
		\end{equation*}
		On one hand, Corollary~5 from \cite{GarciaPortugues2023} ensures that $\lambda_k^\psi > 0$ for all $k \geq 1$. On the other hand, since $f^Z$ is not the density of the uniform distribution, there exists some $k \geq 1$ such that $\alpha_k \neq 0$. These two facts ensure that $e_{\rm P} > 0$, completing the proof.
	\end{proof}

	\begin{proof}[Proof of Proposition~\ref{prop:contiguity}]
		In this proof, all expectations and stochastic convergences are under~${\rm P}\n_0$. Note that 
		$$
		\max_{i=1,\ldots,n} 
		\vert \tau_n (X_i ) \vert 
		\leq 
		\sup_{x \in \mathbb{S}^d} \vert \tau_n (x) \vert 
		=o_{\rm P}(\sqrt{n})
		$$
		as~$n\to\infty$. 
		Since $\log (1 + t) = t - \tfrac12 t^2 + O(t^3)$ as $t \to 0$, Lemma~S.1.5 from \cite{BEJ2026supp} yields
		$$
		\Lambda_n 
		=
		\sum_{i = 1}^n
		\,
		\log
		\bigg(
		1 + \frac{\tau_n (X_i)}{\sqrt{n}} 
		\bigg)
		=
		\frac{1}{\sqrt{n}}  
		\sum_{i = 1}^n \tau_n (X_i)
		-
		\frac{1}{2n} \sum_{i = 1}^n \tau_n^2 (X_i)
		+
		O_{\rm P} \bigg( 
		\frac{1}{n^{3/2}}
		\sum_{i = 1}^n \big|\tau_n^3 (X_i)\big| \bigg).
		$$
		Since
		$$
		\E \Bigg[
		\frac{1}{n^{3/2}}
		\sum_{i = 1}^n \big\vert \tau_n^3 (X_i) \big\vert \Bigg]
		=
		\frac{1}{\sqrt{n}}  
		\E[\vert \tau_n^3 (X_1) \vert]
		\leq
		\frac{1}{\sqrt{n}}  
		\sup_{x \in \mathbb{S}^d} \vert \tau_n^3 (x) \vert \to 0,
		$$
		Markov's inequality implies that
		$$
		\frac{1}{n^{3/2}}
		\sum_{i = 1}^n \big\vert \tau_n^3 (X_i) \big\vert
		=
		o_{\rm P}(1)
		,
		$$
		which in turn entails that
		$$
		\Lambda_n 
		=
		\frac{1}{\sqrt{n}}  
		\sum_{i = 1}^n \tau_n (X_i)
		-
		\frac{1}{2n} \sum_{i = 1}^n \tau_n^2 (X_i)
		+
		o_{\rm P} ( 1 ).
		$$
		In order to prove the result, it is therefore sufficient to show that
		\begin{equation}
			\label{zza1}
			\frac{1}{\sqrt{n}}  
			\sum_{i = 1}^n \tau_n (X_i)
			= \frac{1}{\sqrt{n}}\sum_{i = 1}^n \tau (X_i) + o_{\rm P} (1)
		\end{equation}
		and
		\begin{equation}
			\label{zza2}
			\frac{1}{n} \sum_{i = 1}^n \tau_n^2 (X_i)
			= \frac{1}{n} \sum_{i = 1}^n \tau^2 (X_i) + o_{\rm P} (1)
			.
		\end{equation}
		
		We prove~(\ref{zza1}) and~(\ref{zza2}) by establishing the corresponding convergences in~$L_2$ and in~$L_1$, respectively.
		Since
		$$
		{\rm E}[\tau(X_1)]
		=
		\int_{\mathbb{S}^d}
		\tau(u)
		\,
		d\nu_d(u)
		= 
		\int_{\mathbb{S}^d}
		(\tau(u)-\tau_n(u))
		\,
		d\nu_d(u)
		\to 
		0
		,
		$$ 
		we have~$\E[\tau(X_1)] = 0$. Recalling that~$\E[\tau_n(X_1)] = 0$ for all~$n$, we then obtain
		\begin{eqnarray*}
			\E \Bigg[
			\bigg( \frac{1}{\sqrt{n}}\sum_{i = 1}^n \{\tau_n(X_i)-\tau (X_i)\} \bigg)^2
			\Bigg]
			&=&
			\E \big[ \big\lbrace \tau_n (X_1) -  \tau (X_1) \big\rbrace^2
			\big]
			\\[2mm]
			&        \leq &
			\sup_{x \in \mathbb{S}^d} \vert \tau_n (x) - \tau (x) \vert^2 \to 0,
		\end{eqnarray*}
		which proves~(\ref{zza1}).
		The triangle inequality and the Cauchy--Schwarz inequality yield
		\begin{eqnarray*}
			\lefteqn{
				\hspace{-5mm}
				\E \Bigg[
				\Bigg\vert 
				\frac{1}{n}\sum_{i = 1}^n \{\tau_n^2 (X_i)-\tau^2 (X_i)\} \Bigg\vert
				\Bigg]
				\leq 
				\E [
				\vert \tau_n^2 (X_1) - \tau^2 (X_1) \vert
				]
			}
			\\[2mm]
			& & 
			\hspace{6mm}
			\leq 
			\sqrt{        \E[
				\lbrace \tau_n (X_1) - \tau (X_1) \rbrace^2
				]}
			\sqrt{
				\E[
				\lbrace \tau_n (X_1) + \tau (X_1) \rbrace^2
				]
			}
			.
		\end{eqnarray*}
		Now,
		\begin{equation*}
			\E[
			\lbrace \tau_n (X_1) - \tau (X_1) \rbrace^2
			]
			\leq
			\sup_{x \in \mathbb{S}^d} \vert \tau_n (x) - \tau (x) \vert^2 \to 0,
		\end{equation*}
		and
		\begin{eqnarray*}
			\E[
			\lbrace \tau_n (X_1) + \tau (X_1) \rbrace^2
			]
			&=&
			\E[
			\lbrace 2\tau (X_1) + \tau_n (X_1) - \tau (X_1)\rbrace^2
			]
			\\[2mm]
			&\leq&
			8 \E[
			\tau^2 (X_1)
			]
			+
			2
			\E[
			\lbrace \tau_n (X_1) - \tau (X_1)\rbrace^2
			]
			\\[2mm]
			&=&
			O(1)+o(1)= O (1)
			.
		\end{eqnarray*}
		Consequently,
		\begin{equation*}
			\E \Bigg[
			\Bigg\vert 
			\frac{1}{n}\sum_{i = 1}^n \{\tau_n^2 (X_i)-\tau^2 (X_i)\} \Bigg\vert
			\Bigg]
			=
			o(1),
		\end{equation*}
		which shows~(\ref{zza2}), hence completes the proof.
	\end{proof}

	We turn to the proof of Theorem~\ref{th:localpower}, which will require to consider the Hilbert--Schmidt expansion of the square-integrable symmetric
	kernel~$h_*^0$ in~(\ref{Exprhzerostar}). For each~$\ell\geq2$, let
	$\Psi_{\ell,1},\ldots,\Psi_{\ell,m_\ell}$ be an orthonormal basis of
	$\mathcal H_\ell$ in~$L^2(\nu_d)$, where
	$m_\ell:=\dim(\mathcal H_\ell)$. By Corollary~\ref{CorolNull}, the operator~$A$
	has eigenvalue~$\eta_\ell^\psi$ on~$\mathcal H_\ell$ for every~$\ell\geq2$,
	whereas its eigenvalue is zero on~$\mathcal H_0\oplus\mathcal H_1$. Hence the
	Hilbert--Schmidt expansion of~$h_*^0$ may be written as
	\begin{equation}
		\label{HilbertSchmidt}
		h_*^0(x,y)
		=
		\sum_{\ell=2}^{\infty}
		\eta_\ell^\psi
		\sum_{j=1}^{m_\ell}
		\Psi_{\ell,j}(x)\Psi_{\ell,j}(y)
		\quad
		\textrm{ in } L^2(\nu_d\otimes\nu_d).
	\end{equation}
	We will need the next preliminary result. In the following, $\E_{0}$ and $\E_{\tau_n}$ will stand for the expectations under~${\rm P}^{(n)}_{0}$ and~${\rm P}^{(n)}_{\tau_n}$, respectively.
	
	\begin{Lem}
		\label{LemLocalPowers}
		(i) There exists~$C>0$ such that
		$\sqrt{n}|\E_{\tau_n}[\Psi_{\ell,j}(X_1)]|\leq C$
		for all~$\ell\geq2$, all~$j=1,\ldots,m_\ell$, and all positive integers~$n$.
		(ii) There exists~$C>0$ such that
		$\E_{\tau_n}[(\Psi_{\ell,j}(X_1))^2]\leq C$
		for all~$\ell\geq2$, all~$j=1,\ldots,m_\ell$, and all positive integers~$n$.
		(iii) For any positive integer~$L$, there exists~$C_L>0$ such that
		$\E_{\tau_n}[(\Psi_{\ell,j}(X_1))^4]\leq C_L$
		for all~$\ell=2,\ldots,L$ with~$\eta_\ell^\psi>0$,
		all~$j=1,\ldots,m_\ell$, and all positive integers~$n$.
		(iv) For any~$L$, the collection of random variables~
		$$
		\bigg\{
		S_{L,n}
		:=
		\sum_{\ell=2}^{L}\eta_\ell^\psi
		\sum_{j=1}^{m_\ell}
		\bigg(
		\frac{1}{\sqrt{n}}
		\sum_{i=1}^n
		\Psi_{\ell,j}(X_i)
		\bigg)^2
		,
		\
		n=1,2,\ldots
		\bigg\}
		$$
		is uniformly integrable under~${\rm P}^{(n)}_{\tau_n}$. 
	\end{Lem}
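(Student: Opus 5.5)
The plan is to exploit that, under ${\rm P}^{(n)}_{\tau_n}$, the density with respect to~$\nu_d$ is $1+\tau_n/\sqrt n$. Then every expectation splits into a uniform part and a perturbation of order $n^{-1/2}$, and the perturbation is uniformly controlled because $\sup_n\sup_x|\tau_n(x)|=:K<\infty$ by~\eqref{eq:taun2}. Throughout, the $\Psi_{\ell,j}$ are orthonormal in $L^2(\nu_d)$ and, for $\ell\geq2$, orthogonal to constants.

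For~(i), I write
$$\sqrt n\,\E_{\tau_n}[\Psi_{\ell,j}(X_1)]=\int_{\mathbb S^d}\Psi_{\ell,j}\tau_n\,d\nu_d,$$
since $\int\Psi_{\ell,j}\,d\nu_d=0$. Cauchy--Schwarz then bounds this by $\|\tau_n\|_{L^2(\nu_d)}\leq K$. For~(ii), I bound
$$\E_{\tau_n}[\Psi_{\ell,j}^2]=\int\Psi_{\ell,j}^2(1+\tau_n/\sqrt n)\,d\nu_d\leq 1+K.$$

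For~(iii), I use the same density bound: $\E_{\tau_n}[\Psi_{\ell,j}^4]\leq(1+K)\int\Psi_{\ell,j}^4\,d\nu_d$. Each $\mathcal H_\ell$ with $\ell\leq L$ is finite-dimensional and consists of polynomials, so each $\Psi_{\ell,j}$ is bounded. Only finitely many indices $(\ell,j)$ with $\ell\leq L$ occur, so the maximum of these fourth moments gives $C_L$. The restriction $\eta_\ell^\psi>0$ is harmless here. In fact a uniform sup bound, $\|\Psi_{\ell,j}\|_\infty^2\leq\dim\mathcal H_\ell$ from the addition theorem, would give the bound directly.

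Part~(iv) is the main step. Since $S_{L,n}$ is a finite sum, with at most $\sum_{\ell\leq L}m_\ell$ terms and bounded coefficients $\eta_\ell^\psi$, it suffices to show that each $T_n:=(n^{-1/2}\sum_i\Psi_{\ell,j}(X_i))^2$ is uniformly integrable. I will do this by bounding $\sup_n\E_{\tau_n}[T_n^2]$, a fourth moment of a normalized sum. I set $\mu_n:=\E_{\tau_n}[\Psi_{\ell,j}(X_1)]$ and $Y_i:=\Psi_{\ell,j}(X_i)-\mu_n$, and decompose
$$n^{-1/2}\sum_i\Psi_{\ell,j}(X_i)=n^{-1/2}\sum_iY_i+\sqrt n\,\mu_n.$$
By~(i), the deterministic shift $\sqrt n\mu_n$ is bounded by~$C$. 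For the centered i.i.d.\ part, the standard expansion gives
$$\E\Big[\Big(n^{-1/2}\sum_iY_i\Big)^4\Big]=n^{-1}\E[Y_1^4]+3(1-1/n)(\E[Y_1^2])^2,$$
which is bounded uniformly in~$n$ by~(ii) and~(iii), using that $|\mu_n|$ is bounded. Then $(a+b)^4\leq 8(a^4+b^4)$ gives $\sup_n\E_{\tau_n}[T_n^2]<\infty$, and uniform integrability follows from de la Vallée-Poussin. The only care needed is to keep the constants independent of~$n$, which the three previous parts provide.
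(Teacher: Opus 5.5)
Your proof is correct. Parts (i), (ii) and (iv) follow essentially the paper's argument: change of measure to $\nu_d$ with density $1+\tau_n/\sqrt n$, Cauchy--Schwarz, and the centered-plus-shift fourth-moment bound. The only cosmetic difference in (iv) is that the paper bounds $S_{L,n}^2$ as a whole via Cauchy--Schwarz, whereas you invoke uniform integrability of each summand.

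The genuine difference is in (iii). The paper never uses that $\Psi_{\ell,j}$ is a spherical harmonic. It uses only the eigen-equation $\int h_*^0(x,y)\Psi_{\ell,j}(y)\,d\nu_d(y)=\eta_\ell^\psi\Psi_{\ell,j}(x)$ and boundedness of $h_*^0$, which give $\Psi_{\ell,j}^2\leq\|h_*^0\|_\infty^2/(\eta_\ell^\psi)^2$ $\nu_d$-almost everywhere. This is why the statement restricts to $\eta_\ell^\psi>0$, and it is an argument that would work for any eigenbasis of a bounded-kernel operator.

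You instead exploit the explicit structure: spherical harmonics are continuous polynomials, and the addition theorem gives $\|\Psi_{\ell,j}\|_\infty^2\leq m_\ell$. This is more elementary and yields $\E_{\tau_n}[\Psi_{\ell,j}^4]\leq(1+K)m_\ell$ for every $\ell\leq L$, regardless of whether $\eta_\ell^\psi$ vanishes. It therefore also removes the need for the index set $\mathcal I_L$ in (iv).

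One small precision: finiteness of $K=\sup_n\sup_x|\tau_n(x)|$ uses continuity of each $\tau_n$ for the finitely many small $n$, not only~\eqref{eq:taun2}. That continuity is part of the standing assumptions, so nothing is missing.
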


	\begin{proof}
		(i) 
		Since~$\E_{0} [\Psi_{\ell,j} (X_1)] = 0$, we have
		\begin{eqnarray*}
			\E_{\tau_n}[
			\Psi_{\ell,j} (X_1)
			]
			&        =&
			\E_{0}\!\Bigg[
			\Psi_{\ell,j} (X_1)
			\frac{d{\rm P}^{(n)}_{\tau_n}}{d{\rm P}^{(n)}_0}
			\Bigg]
			\\[2mm]
			&=&
			\E_{0}\!\bigg[
			\Psi_{\ell,j} (X_1)
			\bigg(
			1 + \frac{\tau_n (X_1)}{\sqrt{n}} 
			\bigg)
			\bigg]
			\\[2mm]
			&=&
			\frac{1}{\sqrt{n}} 
			\E_{0}[
			\Psi_{\ell,j} (X_1)
			\tau_n (X_1)        ]
			.
		\end{eqnarray*}
		Applying the Cauchy--Schwarz inequality and using $\E_{0} [\Psi_{\ell,j}^2 (X_1)] = 1$ then yields 
		$$
		\sqrt{n}
		|
		\E_{\tau_n}[
		\Psi_{\ell,j} (X_1)
		]
		|
		\leq
		\sqrt{
			\E_{0}[
			\Psi_{\ell,j}^2 (X_1)
			]
		}
		\sqrt{
			\E_{0}[
			\tau_n^2 (X_1)
			]
		}
		\leq
		C
		,
		$$
		where $C:= \sup_{n \in \N}  \E_{0} [ \tau_n^2 (X_1) ]$ is finite since $\tau_n$ converges uniformly to~$\tau$.
		\vspace{3mm}
		
		(ii) Proceeding similarly, 
		\begin{equation*}
			\E_{\tau_n}[
			\Psi_{\ell,j}^2 (X_1)
			]
			=
			\E_{0}\bigg[
			\Psi_{\ell,j}^2 (X_1)
			\bigg(
			1 + \frac{\tau_n (X_1)}{\sqrt{n}} 
			\bigg)
			\bigg]
			\leq 
			C
			\E_{0}[
			\Psi_{\ell,j}^2 (X_1)
			]
			=
			C
			,
		\end{equation*}
		where $C := \sup_{n \in \N} \sup_{u \in \mathbb{S}^d} |1+\tau_n (u)/\sqrt{n}|$ is finite since $\tau_n/\sqrt{n}$ converges uniformly to~$0$. 
		\vspace{-1mm}
		
		(iii)
		Denote as~$\mathcal{I}_L$ the collection of indices~$\ell=2,\ldots,L$ such that~$\eta_\ell^\psi>0$. We may assume that~$\mathcal I_L$ is non-empty (if it is empty, then there is nothing to prove). For any~$\ell\in \mathcal{I}_L$ and any~$j=1,\ldots,m_\ell$, applying the Cauchy--Schwarz inequality in the identity
		$$
		\int_{\mathbb{S}^d} h_*^0(x,y)\Psi_{\ell,j}(y)\,d\nu_d(y)
		=
		\eta_\ell^\psi\Psi_{\ell,j}(x)
		$$
		provides
		$$
		(\Psi_{\ell,j}(x))^2
		\leq
		\frac{1}{(\eta_\ell^\psi)^2}
		\bigg(
		\int_{\mathbb{S}^d} |h_*^0(x,y)|^2\,d\nu_d(y)
		\bigg)
		\bigg(
		\int_{\mathbb{S}^d}\Psi_{\ell,j}^2(y)\,d\nu_d(y)
		\bigg)
		$$
		$\nu_d$-almost everywhere. Since the kernel~$h_*^0$ is bounded, with sup-norm~$\|h_*^0\|_\infty$, say, and since
		$\int_{\mathbb{S}^d}\Psi_{\ell,j}^2(y)\,d\nu_d(y)=1$, this yields
		$$
		(\Psi_{\ell,j}(x))^2
		\leq
		\frac{\|h_*^0\|_\infty^2}{\ell_L^2}
		,
		\quad
		\textrm{with }
		\ell_L := \inf\{\eta_\ell^\psi:\ell\in \mathcal{I}_L\}
		.
		$$
		$\nu_d$-almost everywhere. Therefore,
		$$
		\E_{\tau_n}[(\Psi_{\ell,j}(X_1))^4]
		=
		\E_{0}\!\bigg[
		(\Psi_{\ell,j}(X_1))^4
		\bigg(
		1+\frac{\tau_n(X_1)}{\sqrt n}
		\bigg)
		\bigg]
		\leq
		\frac{\|h_*^0\|_\infty^4}{\ell_L^4}
		\sup_{n\in\N}
		\sup_{u\in\mathbb S^d}
		\bigg|1+\frac{\tau_n(u)}{\sqrt n}\bigg|
		=:
		C_L
		$$
		for all~$\ell\in\mathcal{I}_L$, all~$j=1,\ldots,m_\ell$, and all positive integers~$n$.  
		\vspace{3mm}
		
		(iv)
		Fix a positive integer~$L$.
		For any~$n$, decompose~$S_{L,n}$ into
		$$
		S_{L,n}
		=
		\sum_{\ell\in \mathcal{I}_L}
		\eta_\ell^\psi
		\sum_{j=1}^{m_\ell}
		Y_{\ell,j,n}^2 
		,
		\quad
		\textrm{with }
		Y_{\ell,j,n}
		=
		\frac{1}{\sqrt n}
		\sum_{i=1}^n \Psi_{\ell,j}(X_i)
		.
		$$
		Since~$\eta_\ell^\psi\geq 0$ for all~$\ell$, the Cauchy--Schwarz inequality yields
		$$
		S_{L,n}^2
		\leq
		\Bigg(\sum_{\ell=2}^{L}\eta_\ell^\psi m_\ell\Bigg)
		\Bigg(\sum_{\ell=2}^{L}\eta_\ell^\psi
		\sum_{j=1}^{m_\ell}Y_{\ell,j,n}^4\Bigg)
		.
		$$
		Therefore,
		\begin{equation}
			\label{DecompSupSLn}
			\sup_{n \in \N} \E_{\tau_n}[S_{L,n}^2]
			\leq
			\Bigg(\sum_{\ell=2}^{L}\eta_\ell^\psi m_\ell\Bigg)
			\Bigg(\sum_{\ell=2}^{L}\eta_\ell^\psi
			\sum_{j=1}^{m_\ell}
			\sup_{n \in \N}  \E_{\tau_n}[Y_{\ell,j,n}^4]\Bigg)
			.
		\end{equation}
		For any~$\ell\in \mathcal{I}_L$, any~$j=1,\ldots,m_\ell$, and any~$n$, decompose then
		$$
		Y_{\ell,j,n}
		=
		\frac{1}{\sqrt n}
		\sum_{i=1}^n
		\xi_{i,\ell,j,n}
		+
		\sqrt n\mu_{\ell,j,n}
		,
		$$
		with~$\mu_{\ell,j,n}:=\E_{\tau_n}[\Psi_{\ell,j}(X_1)]$ and~$\xi_{i,\ell,j,n}:=\Psi_{\ell,j}(X_i)-\mu_{\ell,j,n}$.
		The $C_r$-inequality yields
		\begin{equation}
			\label{ajspp}
			\E_{\tau_n}[Y_{\ell,j,n}^4]
			\leq
			8
			\E_{\tau_n}\!
			\bigg[
			\bigg(
			\frac{1}{\sqrt n}\sum_{i=1}^n
			\xi_{i,\ell,j,n}
			\bigg)^4
			\bigg]
			+
			8n^2\mu_{\ell,j,n}^4
			.
		\end{equation}
		Since the $\xi_{i,\ell,j,n}$'s are centered and mutually independent, we have
		\begin{eqnarray*}
			\E_{\tau_n}\!
			\bigg[
			\bigg(
			\sum_{i=1}^n
			\xi_{i,\ell,j,n}
			\bigg)^4
			\bigg]
			&=&
			\sum_{i=1}^n \E_{\tau_n}[\xi_{i,\ell,j,n}^4]
			+
			6\sum_{1\leq i<j\leq n}
			\E_{\tau_n}[\xi_{i,\ell,j,n}^2] \E_{\tau_n}[\xi_{j,\ell,j,n}^2]
			\\[2mm]
			&=&
			n
			\E_{\tau_n}[\xi_{1,\ell,j,n}^4]
			+
			6\binom n2
			(\E_{\tau_n}[\xi_{1,\ell,j,n}^2])^2
			.
		\end{eqnarray*}
		Using the $C_r$-inequality again, then Parts~(ii)--(iii) of the lemma yields that there exists~$C>0$ such that
		$$
		\E_{\tau_n}\!
		\bigg[
		\bigg(
		\sum_{i=1}^n
		\xi_{i,\ell,j,n}
		\bigg)^4
		\bigg]
		\leq
		Cn^2
		$$
		for all~$\ell\in \mathcal{I}_L$, all~$j=1,\ldots,m_\ell$, and all~$n$. Plugging this in~(\ref{ajspp}) and using Part~(i) of the lemma then shows that there exists~$C>0$ such that~$\E_{\tau_n}[Y_{\ell,j,n}^4]\leq C$ for all~$\ell\in \mathcal{I}_L$, all~$j=1,\ldots,m_\ell$, and all~$n$. From~(\ref{DecompSupSLn}), it then follows that
		$$
		\sup_{n \in \N} \E_{\tau_n}[S_{L,n}^2]
		<
		\infty
		,
		$$
		hence that~$\{S_{L,n}\}$ is uniformly integrable.
	\end{proof}

	We can now prove Theorem~\ref{th:localpower}.

	\begin{proof}[Proof of Theorem~\ref{th:localpower}]
		From Theorem~\ref{TheorAsymptDistr} and the discussion below this result, we have that
		$
		\Dstat_n 
		=
		D_{n\ast}^{0}
		+ o_{\rm P} (1)
		$
		under ${\rm P}^{(n)}_{0}$, where
		$$
		D_{n\ast}^{0}
		=
		\frac{1}{n}
		\sum_{i,j=1}^n
		h_*^0(X_i,X_j)
		$$
		involves the kernel~$h_*^0$ in~(\ref{Exprhzerostar}). 
		By the Hilbert--Schmidt expansion of the kernel~$h_*^0$ in~(\ref{HilbertSchmidt}), we have that, for each fixed~$n$,
		the random variables~$S_{L,n}$ introduced in Lemma~\ref{LemLocalPowers} are such that, as~$L\to\infty$, 
		$$
		S_{L,n}
		\to 
		D_{n\ast}^{0}
		\qquad
		\textrm{in}
		\
		L^1({\rm P}_0^{(n)}).
		$$
		Indeed, since
		$\eta_\ell^\psi\geq 0$ for all~$\ell$, $\E_0[\Psi_{\ell,j}(X_1)]=0$, and
		$\E_0[\Psi_{\ell,j}^2(X_1)]=1$,
		$$
		\E_0[|D_{n\ast}^{0}-S_{L,n}|]
		=
		\sum_{\ell>L}\eta_\ell^\psi m_\ell
		\to
		0,
		$$
		because $\sum_{\ell=2}^\infty \eta_\ell^\psi m_\ell=\E_0[h_*^0(X_1,X_1)]<\infty$. Taking $L=n$ yields $D_{n\ast}^{0}-S_{n,n}=o_{\rm P}(1)$ under ${\rm P}_0^{(n)}$, hence also
		$\Dstat_n = S_{n,n} + o_{\rm P} (1)$
		under~${\rm P}^{(n)}_{0}$.
		By applying Le Cam's first lemma, Proposition~\ref{prop:contiguity} entails that~${\rm P}^{(n)}_{\tau_n}$ and ${\rm P}^{(n)}_0$ are mutually contiguous. Therefore, the asymptotic equivalence
		$
		\Dstat_n = S_{n,n} + o_{\rm P} (1)
		$
		holds under~${\rm P}^{(n)}_{\tau_n}$, too. In order to establish the result, it is thus enough to prove that, with obvious notation,
		\begin{equation}
			\label{toprovePgn}
			S_{n,n}
			\stackrel{\mathcal{L}}{\to}
			\sum_{\ell = 2}^{\infty} \eta_\ell^\psi
			\chi^2_{m_\ell}
			\big( \|\Pi_\ell\tau\|_{L^2(\nu_d)}^2 \big)
			\quad        
			\textrm{under }{\rm P}^{(n)}_{\tau_n}
			.
		\end{equation}
		
		Using the same notation as in Lemma~\ref{LemLocalPowers}, we first show that
		\begin{equation}
			\label{eq:Snn}
			\lim_{L \to \infty}
			\sup_{n \geq L} \E_{\tau_n}[ \vert S_{n, n} - S_{L, n}\vert]
			=
			o(1)
			.
		\end{equation}
		Recalling that~$\eta_\ell^\psi \geq 0$ for all~$\ell$, we have that, for any $n > L$,
		\begin{eqnarray*}
			\E_{\tau_n}[ \vert S_{n, n} - S_{L, n}\vert]
			&=&
			\sum_{\ell = L+1}^{n}
			\eta_\ell^\psi
			\sum_{j=1}^{m_\ell}
			\E_{\tau_n}\!
			\Bigg[
			\frac{1}{n}
			\sum_{i, i' = 1}^n
			\Psi_{\ell,j} (X_i)
			\Psi_{\ell,j} (X_{i'})
			\Bigg]
			\\[1mm]
			&=&
			\sum_{\ell = L+1}^{n}
			\eta_\ell^\psi
			\sum_{j=1}^{m_\ell}
			\bigg\{
			\E_{\tau_n}[
			\Psi_{\ell,j}^2 (X_1)
			]
			+
			\frac{n - 1}{n}
			\big(
			\E_{\tau_n}[
			\sqrt{n} \Psi_{\ell,j} (X_1)]
			\big)^2
			\bigg\}
			.
		\end{eqnarray*}
		It directly follows from Lemma~\ref{LemLocalPowers}(i)--(ii) that there exists~$C>0$ such that
		\begin{equation*}
			\E_{\tau_n}[ \vert S_{n, n} - S_{L, n}\vert]
			\leq
			C        \sum_{\ell = L+1}^{n}
			\eta_\ell^\psi m_\ell
		\end{equation*}
		for any~$n>L$, hence so that
		\begin{equation*}
			\sup_{n \geq L} \E_{\tau_n} [ \vert S_{n, n} - S_{L, n}\vert]
			\leq
			C\sum_{\ell = L+1}^{\infty}
			\eta_\ell^\psi m_\ell
			.
		\end{equation*}
		Since $\sum_{\ell = 2}^{\infty} \eta_\ell^\psi m_\ell < \infty$, \eqref{eq:Snn} follows.
		
		Now, fix $\varepsilon > 0$. Denote by $\mathcal{L}_{L,n}$ the distribution of $S_{L,n}$ under~${\rm P}^{(n)}_{\tau_n}$ (in particular, $\mathcal L_{n,n}$ denotes the distribution of~$S_{n,n}$). Hence, there exists~$L_1 > 0$ such that
		\begin{equation}
			\label{eq:Wass1}
			\sup_{n \geq L} W_1 ( \mathcal{L}_{n,n}, \mathcal{L}_{L,n} )
			\leq
			\sup_{n \geq L} \E_{\tau_n} [ \vert S_{n, n} - S_{L, n}\vert]
			<
			\frac{\varepsilon}{3}
			\quad
			\forall L \geq L_1,
		\end{equation}
		where $W_1$ is the Wasserstein distance of order one.
		
		Denote by $\mathcal{L}_{L}$ and $\mathcal{L}_{\infty}$ the distributions given by
		$$
		\sum_{\ell = 2}^{L} \eta_\ell^\psi
		\chi^2_{m_\ell}
		\big( \|\Pi_\ell\tau\|_{L^2(\nu_d)}^2 \big)
		\quad
		\textrm{and}
		\quad
		\sum_{\ell = 2}^{\infty} \eta_\ell^\psi
		\chi^2_{m_\ell}
		\big( \|\Pi_\ell\tau\|_{L^2(\nu_d)}^2 \big),
		$$
		respectively. One easily sees that $\mathcal{L}_{L}$ converges to $\mathcal{L}_{\infty}$ as $L \to \infty$ in the Wasserstein distance of order one, so that there exists~$L_2 > 0$ such that
		\begin{equation}
			\label{eq:Wass2}
			W_1 ( \mathcal{L}_{L}, \mathcal{L}_{\infty} )
			<
			\frac{\varepsilon}{3}
			\quad
			\forall L \geq L_2.
		\end{equation}
		
		Given $L_{\ast}=\max\{L_1,L_2\}$, the finite-dimensional vector
		$$
		Z_{L_{\ast},n}
		=
		\bigg(
		\frac{1}{\sqrt n}\sum_{i=1}^n \Psi_{2,1}(X_i),
		\ldots,
		\frac{1}{\sqrt n}\sum_{i=1}^n \Psi_{L_{\ast},m_{L_{\ast}}}(X_i)
		\bigg)^T
		$$
		is asymptotically standard normal under~${\rm P}_0^{(n)}$. Moreover, Proposition~\ref{prop:contiguity} shows that the log-likelihood ratio of ${\rm P}_{\tau_n}^{(n)}$ with respect to ${\rm P}_0^{(n)}$ is
		$$
		\Lambda_n
		=
		\frac{1}{\sqrt n}\sum_{i=1}^n \tau(X_i)
		-
		\frac12 \E_0[\tau^2(X_1)]
		+
		o_P(1)
		$$
		under ${\rm P}_0^{(n)}$. Since
		$$
		{\rm Cov}_0
		[\Psi_{\ell,j}(X_1),\tau(X_1)]
		=
		\int_{{\mathbb S}^d} \Psi_{\ell,j}(u)\tau(u)
		\,d\nu_d(u)
		=:
		\tau_{\ell,j},
		$$
		Le Cam's third lemma yields that, under ${\rm P}_{\tau_n}^{(n)}$, 
		$Z_{L_{\ast},n}$ is asymptotically normal with identity covariance matrix and mean vector with entries~$\tau_{\ell,j}$, $\ell=2,\ldots,L_{\ast}$, $j=1,\ldots,m_\ell$. Consequently,
		$$
		S_{L_{\ast},n}
		=
		\sum_{\ell=2}^{L_{\ast}}\eta_\ell^\psi
		\sum_{j=1}^{m_\ell}
		\left(
		\frac{1}{\sqrt n}\sum_{i=1}^n \Psi_{\ell,j}(X_i)
		\right)^2
		\stackrel{\mathcal{L}}{\to}
		\sum_{\ell=2}^{L_{\ast}}\eta_\ell^\psi
		\chi^2_{m_\ell}
		\bigg(
		\sum_{j=1}^{m_\ell}\tau_{\ell,j}^2
		\bigg)
		$$
		under ${\rm P}_{\tau_n}^{(n)}$. Since Parseval's identity on~$\mathcal H_\ell$ yields
		$$
		\sum_{j=1}^{m_\ell}\tau_{\ell,j}^2
		=
		\|\Pi_\ell\tau\|_{L^2(\nu_d)}^2,
		$$
		this means that $\mathcal L_{L_{\ast},n}$ converges weakly to $\mathcal L_{L_{\ast}}$.
		Now, Lemma~\ref{LemLocalPowers}(iv) implies that~$\{S_{L_{\ast},n}\}$ is uniformly integrable under~$P_{\tau_n}^{(n)}$, so that
		we have the convergence for the corresponding first moments, too.
		Since weak convergence together with convergence of first moments is equivalent to convergence in $W_1$, we obtain that~$
		W_1(\mathcal L_{L_{\ast},n},\mathcal L_{L_{\ast}})\to0$.
		Thus, there exists $n_0>0$ such that
		\begin{equation}
			\label{eq:Wass3}
			W_1 ( \mathcal{L}_{L_{\ast},n}, \mathcal{L}_{L_{\ast}} )
			<
			\frac{\varepsilon}{3}
			\quad
			\forall n \geq n_0.
		\end{equation}
		
		Finally, taking $n_{\ast} = \max \{ n_0, L_{\ast} \}$, the triangle inequality and Equations~\eqref{eq:Wass1}--\eqref{eq:Wass3} ensure that, for any $n \geq n_{\ast}$,
		\begin{equation*}
			W_1 ( \mathcal{L}_{n ,n}, \mathcal{L}_{\infty} )
			\leq
			W_1 ( \mathcal{L}_{n ,n}, \mathcal{L}_{L_{\ast} ,n} )
			+
			W_1 ( \mathcal{L}_{L_{\ast} ,n}, \mathcal{L}_{L_{\ast}} )
			+
			W_1 ( \mathcal{L}_{L_{\ast}}, \mathcal{L}_{\infty} )
			< \varepsilon.
		\end{equation*}
		Since convergence in~$W_1$ implies convergence in distribution, the result is proved.
	\end{proof}
	
	Finally, we prove Proposition~\ref{prop:blind}.

	\begin{proof}[Proof of Proposition~\ref{prop:blind}]
		(i) For any~$x\in\mathbb{S}^d$, let
		$$
		\tau_n^\ast(x)
		:=
		\sqrt n
		\bigg\{
		\bigg(
		\frac{1-\|\phi_n\|^2}{\|x-\phi_n\|^2}
		\bigg)^d
		-
		1
		\bigg\}
		,
		\qquad
		\textrm{with }
		\phi_n
		:=
		\frac{v}{2d\sqrt n}
		.
		$$
		For all sufficiently large~$n$, $\phi_n\in\mathbb{B}^{d+1}$ and, by definition of the spherical Cauchy density,
		${\rm Q}_{v}^{(n)}={\rm P}^{(n)}_{\tau_n^\ast}$. Moreover, uniformly in~$x\in\mathbb{S}^d$,
		\begin{eqnarray*}
			\tau_n^\ast(x)
			&=&
			\sqrt n
			\bigg\{
			\bigg(
			\frac{1-\|v\|^2/(4d^2n)}
			{1-2x^T v/(2d\sqrt n)+\|v\|^2/(4d^2n)}
			\bigg)^d
			-
			1
			\bigg\}
			\\[2mm]
			&=&
			v^Tx+o(1)
			\\[2mm]
			&=&
			\tau(x)+o(1)
			,
		\end{eqnarray*}
		as~$n\to\infty$. Thus, the sequence~$(\tau_n^\ast)$ satisfies~\eqref{eq:taun1}--\eqref{eq:taun2} with the same limiting function~$\tau$ as the sequence~$(\tau_n)$.
		Therefore, applying Proposition~\ref{prop:contiguity} to~$(\tau_n)$ and to~$(\tau_n^\ast)$ yields, under~${\rm P}_0^{(n)}$,
		\begin{eqnarray*}
			\lefteqn{
				\log \bigg( \frac{d{\rm P}^{(n)}_{\tau_n}}{d{\rm Q}^{(n)}_{v}} \bigg)
				=
				\log \bigg( \frac{d{\rm P}^{(n)}_{\tau_n}}{d{\rm P}^{(n)}_0} \bigg)
				-
				\log \bigg( \frac{d{\rm Q}^{(n)}_{v}}{d{\rm P}^{(n)}_0} \bigg)
			}
			\\[1mm]
			& &
			\hspace{5mm}
			=
			\bigg\{
			\frac{1}{\sqrt n}\sum_{i=1}^n \tau(X_i)
			-\frac12\E_0[\tau^2(X_1)]
			\bigg\}
			-
			\bigg\{
			\frac{1}{\sqrt n}\sum_{i=1}^n \tau(X_i)
			-\frac12\E_0[\tau^2(X_1)]
			\bigg\}
			+
			o_{\rm P}(1)
			\\[1mm]
			& &
			\hspace{5mm}
			=
			o_{\rm P}(1)
			,
		\end{eqnarray*}
		as~$n\to\infty$. Proposition~\ref{prop:contiguity} and Le Cam's first lemma also imply that
		${\rm P}^{(n)}_{\tau_n}$ and~${\rm Q}^{(n)}_{v}$ are both mutually contiguous with respect to~${\rm P}^{(n)}_0$. Hence, the last display holds under~${\rm Q}^{(n)}_{v}$ and under~${\rm P}^{(n)}_{\tau_n}$, too. This proves~\eqref{eq:PgnQv0equiv}.
		\vspace{3mm}
		
		(ii) Fix~$n$ large enough to have~$\phi_n\in\mathbb{B}^{d+1}$  and assume that~$X_1,\ldots,X_n$ are \mbox{i.i.d.} with distribution~${\rm Q}^{(n)}_{v}$, that is, with distribution~${\rm SC}(\phi_n)$. Since
		$$
		\log \bigg( \frac{d{\rm P}^{(n)}_{\tau_n}}{d{\rm Q}^{(n)}_{v}} \bigg)
		=
		o_{\rm P}(1)
		$$
		as~$n\to\infty$ 
		under~${\rm Q}^{(n)}_{v}$, Le Cam's third lemma entails that~$T_n$ has a limiting distribution under~${\rm P}^{(n)}_{\tau_n}$ if and only if it has one under~${\rm Q}^{(n)}_{v}$, and that the two limiting distributions are then equal. Since invariance implies that the (limiting) distribution of~$T_n$ under~${\rm Q}^{(n)}_{v}$ is the same as under~${\rm P}^{(n)}_0$, the result follows.
	\end{proof}
	
	\bibliographystyle{apalike}
	\bibliography{Paper.bib}

\end{document}